\def\@hspace#1{\begingroup\setlength\dimen@{#1}\hskip\dimen@\endgroup}
\newif\ifxetexorluatex
\definecolor{lw}{RGB}{0,0,255}
\definecolor{todo}{RGB}{0,0,255}
\newtheorem{thm}{Theorem}[section]
\newtheorem{prop}[thm]{Proposition}
\newtheorem{lemm}[thm]{Lemma}
\newtheorem{cor}[thm]{Corollary}
\newtheorem{defn}[thm]{Definition}
\newtheorem{remark}[thm]{Remark}
\newtheorem{conj}[thm]{Conjecture}
\numberwithin{equation}{section}
\newenvironment{taggedtheorem}[1]
 {\taggedtheoremx}
 {\endtaggedtheoremx}
\def\R{{\mathbb{R}}}
\def\E{{\mathbb{E}}}
\def\P{{\mathbb{P}}}
\def\H{{\mathbb{H}}}
\def\F{{\mathbb{F}}}
\def\N{{\mathbb{N}}}
\def\bS{{\mathbb{S}}}
\def\T{{\mathbb{T}}}
\def\Z{{\mathbb{Z}}}
\def\sA{{\mathcal{A}}}
\def\sC{{\mathcal{C}}}
\def\sG{{\mathcal{G}}}
\def\sH{{\mathcal{H}}}
\def\sL{{\mathcal{L}}}
\def\sP{{\mathcal{P}}}
\def\sT{{\mathcal{T}}}
\def\e{{\mathrm{e}}}
\def\rd{{\mathrm{d}}}
\def\id{{\mathbf{1}}}
\def\Pr{{\mathrm{Pr}}}
\def\BRW{{\text{BRW}}}
\def\brw{{\mathrm{BRW}}}
\DeclareMathOperator*\diam{diam}
\newcommand{\gp}[3][]{%
  \ifthenelse{\equal{#1}{}}{\left( {#2\,|\, #3} \right)}{\left( {#2\,|\,#3} \right)_{#1}}%
}
\newcommand*{\rmnum}[1]{\expandafter\@slowromancap\romannumeral #1@}
\title{Limit set of branching random walks on hyperbolic groups}
\author{}
\begin{document}

\begin{center}
  {\large\bf Limit set of branching random walks on hyperbolic groups\footnote{The project is supported partially by CNNSF (No. 11671216) and by Hu Xiang Gao Ceng Ci Ren Cai Ju Jiao Gong Cheng-Chuang Xin Ren Cai (No. 2019RS1057).}}

  \vskip 2mm

  % Zhan Shi,
  Vladas Sidoravicius, Longmin Wang \& Kainan Xiang

  % \vskip 2mm
  % \texttt{This version: July 23, 2020 }
\end{center}

\bigskip

\noindent\textbf{Abstract}  Let $\Gamma$ be a nonelementary hyperbolic group with a word metric $d$ and $\partial\Gamma$ its hyperbolic boundary equipped with a visual metric $d_a$ for some parameter $a>1$. Fix a superexponential symmetric probability $\mu$ on $\Gamma$ whose support generates $\Gamma$ as a semigroup, and denote by $\rho$ the spectral radius of the random walk $Y$ on $\Gamma$ with step distribution $\mu$. Let $\nu$ be a probability on $\{1,\, 2, \, 3, \, \ldots\}$ with mean $\lambda=\sum\limits_{k=1}^\infty k\nu(k)<\infty$.
  Let $\BRW(\Gamma, \, \nu, \, \mu)$ be the branching random walk on $\Gamma$ with offspring distribution $\nu$ and base motion $Y$ and $H(\lambda)$ the volume growth rate for the trace of $\BRW(\Gamma, \, \nu, \, \mu)$. We prove for $\lambda \in [1, \, \rho^{-1})$ that the Hausdorff dimension of the limit set $\Lambda$, which is the random subset of $(\partial \Gamma, \, d_a)$ consisting of all accumulation points of the trace of $\BRW(\Gamma, \, \nu, \, \mu)$, is given by $\log_a H(\lambda)$. Furthermore, we prove that $H(\lambda)$ is almost surely a deterministic, strictly increasing and continuous function of $\lambda \in [1, \, \rho^{-1}]$, is bounded by the square root of the volume growth rate of $\Gamma$, and has critical exponent $1/2$ at $\rho^{-1}$ in the sense that
  \[
    H(\rho^{-1}) - H(\lambda) \sim C \sqrt{\rho^{-1} - \lambda} \quad \text{as } \lambda \uparrow \rho^{-1}
  \]
  for some positive constant $C$. We conjecture that the Hausdorff dimension of $\Lambda$ in the critical case $\lambda=\rho^{-1}$ is $\log_aH(\rho^{-1})$ almost surely. This has been confirmed on free groups or the free product (by amalgamation) of finitely many finite groups equipped with the word metric $d$ defined by the standard generating set.

\vskip 2mm
\noindent{\bf AMS 2020 subject classifications}. 60J10, 60J80, 20F67, 05C81.

\vskip 2mm
\noindent{\bf Key words and phrases.} Branching random walk, hyperbolic group, limit set, volume growth rate, Hausdorff dimension, critical exponent.

\tableofcontents

\section{Introduction and main results}
\label{s:intro}

Let $G = (V, \, E)$ be a locally finite, connected infinite graph and consider a branching random walk (BRW) on $G$ described as follows. We begin with one particle at $o \in V$ at time $0$. For $n \geq 1$, each particle alive at time $n$ dies and gives birth to an independent random number of offspring particles according to some probability measure $\nu$, each of which independently takes a step according to a random walk on $G$ with transition probabilities $P$. When $G$ is a Cayley graph of a finitely generated group $\Gamma$ with respect to a symmetric generating set $S$, the transition probabilities $P$ may be defined by a symmetric probability measure $\mu$ on $\Gamma$, that is, $P(x,\, y) = \mu(x^{-1} y)$ for $x$, $y \in \Gamma$. In this case, the base random walk $Y$ can be obtained by multiplying random elements of $\Gamma$ distributed independently according to $\mu$. We write  ${\rm BRW}(\Gamma, \, \nu, \, \mu)$ for the corresponding BRW. When the underlying motion is the simple random walk, we call it the simple BRW on $G$ or $\Gamma$. In this paper, we always assume that $\nu(0)=0$ to guarantee the BRW survives almost surely, and
\[
  \lambda = \sum\limits_{k=1}^{\infty} k \nu(k) \in [1, \, \infty).
\]

A natural question is to ask whether the BRW eventually fills up the whole graph. This is equivalent to the question of the recurrence of the process. A BRW is said to be \emph{recurrent} if, with positive probability (in fact, with probability $1$), some (and hence all) vertex of $G$ is visited by infinitely many particles of the BRW, and \emph{transient} otherwise. There is a general criteria for transience and recurrence in terms of the mean offspring $\lambda$ and the spectral radius $\rho$ of the underlying random walk: the BRW is transient if and only if $\lambda \leq \rho^{-1}$; see Benjamini and Peres~\cite{BP94} for the case $\lambda \neq \rho^{-1}$, and Gantert and M{\"u}ller~\cite{GM06} for the critical case $\lambda = \rho^{-1}$. In the transient case the \emph{trace} of the BRW, i.e., the subgraph consisting of visited vertices and traversed edges, is a proper random subgraph of the original graph. Benjamini and M\"uller~\cite{BM12} studied first general qualitative properties of the trace of BRW on groups; in particular, they proved that it has an exponential volume growth in general. However, their approach gives no quantitative results on the growth rate. Very recently, Hutchcroft proved in~\cite{H19} that any two independent transient BRWs on a nonamenable group intersect at most finitely often almost surely; see also~\cite{CR15} for the case of the subcritical BRWs and~\cite{GM17} for the critical BRWs on planar hyperbolic groups. As a consequence, the trace of such a BRW is tree-like in the sense that it has infinitely many ends almost surely. Thus it is quite interesting to study the geometric property and provide quantitative estimates for the trace of the BRW in the transient regime.

Let $\F_q$ be the free group over $q \geq 2$ symbols $s_1$, $\ldots$, $s_q$. The Cayley graph of $F_q$ with respect to generating set $S = \{s_1,\, s_1^{-1}, \, \ldots,\, s_q,\, s_q^{-1}\}$ is the $(2q)$-regular tree $\T_{2q}$. 
% And $r$-regular tree $\T_r$ is Cayley graph of group $\F_{r,2}:=(\Z_2)^{*r}$  (the free product over $r \geq 3$ symbols $s_1$, $\ldots$, $s_r$ with each $s_i=s_i^{-1}$) with respect to generating set 
% $S = \{s_1,\, s_2,\,\ldots,\, s_r\}.$
On $\F_q$ the geometry of the trace of the BRW has been well understood in the literature when
%  $\nu$ has a finite second moment and
the underlying motion is a (lazy) nearest-neighbor random walk. The boundary $\partial \F_q$ % or $\partial \F_{r,2}$
is defined to be the set of semi-infinite reduced words from the generating set $S$. For each real number $a > 1$, there is a natural metric $d_a$ on $\partial \F_q$ % or $\partial \F_{r,2}$ 
defined by
\begin{eqnarray}\label{e:metric-free}
d_a(\omega,\, \omega') = a^{- N(\omega,\, \omega')},
\end{eqnarray}
where $N(\omega,\, \omega')$ is the maximum integer $n$ such that the sequences $\omega$ and $\omega'$ agree in entries $1$, $2$, $\ldots$, $n$. Under this metric, $\partial \F_q$ % (resp. $\partial \F_{r,2}$)
is a compact metric space and its Hausdorff dimension $\dim_H(\partial \F_q) = \log_a (2q-1)$. % (resp. $\log_a (r-1)$).
Let $\mu$ be a symmetric probability measure $S$ or $S\cup\{e\}$ and $\nu$ a probability on $\N$ with mean $\lambda\in [1, \, \rho^{-1}]$. 
% $$\nu(k)=(1-\lambda^{-1})^{k-1}\lambda^{-1},\ k\in\N:=\{1,\, 2,\, 3,\, \ldots\}.$$
Here $e$ is the identity element of $\F_q$. % or $\F_{r,2}.$
Note that under this setting the base motion $Y$ is a (lazy) nearest-neighbor random walk. As stated before, if $\lambda \in [1,\, \rho^{-1}]$, then the particles in the BRW almost surely eventually vacates every finite subset of $\F_q$, % or $\F_{r,2}$;
and thus the accumulation points at infinity form a random closed subset $\Lambda$ of $\partial \F_q$.  % or $\partial \F_{r,2}$, which is called
We call $\Lambda$ the limit set of the BRW and denote by $\dim_H(\Lambda)$ its Hausdorff dimension under the metric $d_a$. 
% Write $\dim(\partial\F_r)\ \left(\mbox{resp.}\ \dim(\partial \F_{r,2})\right)$ for the Hausdorff dimension of $(\partial \F_r,d_a)$ (resp. $(\partial \F_{r,2},d_a)$).
Hueter and Lalley~\cite{HL00} provided a quantitative description of the Hausdorff dimension of $\Lambda$ which we restate in Theorem~\ref{T:HL} as follows. In fact, their results holds for all regular tree $\T_d$ of degree $d \geq 3$ (i.e., the Cayley graph of the free product $(\Z_2)^{*d}$). The special case when $Y$ is a simple random walk was already proved in Liggett~\cite{L96b}. We remark that in their setting and notion, weak survival is equivalent to transience in our language.

% \setthmtag{A}
\begin{taggedtheorem}{A}[Hueter and Lalley {\cite{HL00}}, Lalley {\cite{L06}}]
  \label{T:HL}
  Consider a BRW on $\F_q$ with mean offspring $\lambda \in [1, \, \rho^{-1}]$ and let $M_n$ be the number of points at distance $n$ from the root that are ever visited by particles of the BRW. Then almost surely $H(\lambda) = \limsup\limits_{n \to \infty} M_n^{1/n}$ exists and is a constant, and the limit set $(\Lambda, \, d_a)$ has Hausdorff dimension $h(\lambda) = \log_a \theta(\lambda)$. The functions $\theta(\lambda)$ and $h(\lambda)$ are continuous and strictly increasing in $\lambda \in [1,\, \rho^{-1}]$, and have critical exponent $1/2$ at $\rho^{-1}$ in the sense that for some constant $C > 0$, 
  \begin{equation}
    \label{e:thetalambda}
    \theta\left(\rho^{-1}\right) - \theta(\lambda) \sim C \, \sqrt{\rho^{-1} - \lambda},\quad h\left(\rho^{-1}\right) - h(\lambda) \sim \frac{C}{\theta(\rho^{-1})\log a} \, \sqrt{\rho^{-1} - \lambda}
  \end{equation}
  as  $\lambda \uparrow \rho^{-1}$. 
  Furthermore, we have that 
  \begin{equation}
    \label{e:thetaR}
    \theta(\lambda) \leq \sqrt{2 q - 1}, \quad
    \text{and }
    h(\lambda) \leq \frac{1}{2} \dim_H \left( \partial \F_q \right), 
  \end{equation}
  with equality holding if and only if the underlying random walk is a lazy simple random walk and $\lambda = \rho^{-1}$. 
\end{taggedtheorem}

%It follows from~\eqref{e:thetaR} that the Hausdorff dimension of the limit set $\Lambda$ takes either a value from $\left(0,\, \frac{1}{2} \dim (\partial \F_d) \right]$ or equals $\dim (\partial \F_d )$.

Recently, Candellero, Gilch and M\"{u}ller~\cite{CGM12} extended Theorem~\ref{T:HL} to the BRW on free product (by amalgamation) of finite groups when $\nu$ has a finite second moment and base motion $Y$ is a lazy nearest-neighbor random walk, % with $\lim\limits_{n \to \infty} M_n^{1/n}$ replaced by $\limsup\limits_{n \to \infty} M_n^{1/n}$ and
with the upper bound in~\eqref{e:thetaR} replaced by the square root of volume growth rate of the underlying group. In fact, they obtained more general results on free product of graphs such as the critical exponent of Hausdorff dimension of $\Lambda$ being $1$ or $1/2$ depends on whether the weighted Green function is left differentiable at $\rho^{-1}$ or not.

The same type of phase transition on the Hausdorff dimension of the geometric boundaries has also been observed for other growth models including the branching Brownian motion on real hyperbolic spaces $\H^m$ (see Lalley and Sellke~\cite{LS97} for $m=2$ and Karpelevich, Pechersky and Suhov~\cite{KPS98} for $m \geq 3$), branching diffusion on hyperbolic spaces (Kelbert and Suhov~\cite{KS07}) and the isotropic contact processes on regular trees (Liggett~\cite{L96b}, Lalley and Sellke~\cite{LS98}). Other instances of processes for which the weak survival phase (or the coexistence phase, i.e., the phase that infinitely many connected clusters exist almost surely) is known to exist include percolation on nonamenable hyperbolic quasi-transitive graphs~\cite{H19b} (see also~\cite{L96,BS01,L01} for the case of transitive nonamenable planar hyperbolic graphs).

In~\cite{HL00}, the function $\theta(\lambda)$ was expressed as the unique solution to an algebraic equation in terms of the weighted Green functions of the base random walk. Based on this formula, Lalley~\cite{L06} pointed out that the critical exponent $1/2$ in~\eqref{e:thetalambda} is related to the exponent $3/2$ occurring in the following asymptotics obtained in~\cite{GW86}: for any $x$ and $y$, there is $C_{x, \, y} > 0$ such that  
\begin{equation}
  \label{e:hkdecay}
  p_n(x,\, y)\sim C_{x,\, y} \, \rho^n \, n^{-3/2}
\end{equation}
as $n$ tends to infinity along the set $\{n: \ p_n(x, \, y) > 0\}$, where $p_n(x,\, y):=\mathbb{P}(Y_n=y\, \vert\, Y_0=x)$. He conjectured that the BRWs on the groups with the property~\eqref{e:hkdecay} should have the same type of phase transition as that in Theorem~\ref{T:HL}. Recently, Gou\"ezel~\cite{G14, G15} proved such a heat kernel decay~\eqref{e:hkdecay} holds for any nonelementary hyperbolic groups and random walks with superexponential step distribution. So it is natural to expect that the BRW on nonelementary hyperbolic groups has critical exponent $1/2$ for both the volume growth rate of its trace and the Hausdroff dimension of its limit set. It is well-known that the importance of critical exponents is based largely on the universality principle, which plays an important role in mathematical physics. % In this paper, our main results make an important breakthrough on this expected universality.

To state our main results, we first review some standard facts on (Gromov) hyperbolic groups; see~\S\ref{s:hg} or~\cite{G87,GH90} for more details. Let $\Gamma$ be a finitely generated group equipped with a word metric $d$. Let $\bS_n := \{x \in \Gamma:\ d(e,\, x) = n \}$ the sphere of radius $n$ centered at the identity element $e$. 
The \emph{exponential growth rate} (or \emph{volume growth rate}) of $\Gamma$ is defined by $\limsup_{n\rightarrow\infty} \left| \bS_n \right|^{1/n}$, where $|A|$ is the cardinality of a set $A$. The logarithm of the volume growth rate is called the \emph{volume entropy}, i.e.,
\begin{equation}
  \label{e:vrate1}
  v = \limsup\limits_{n \to \infty} \frac{1}{n} \log |\bS_n|.
\end{equation}
% \begin{equation}
%   \label{e:vrate}
%   v = \limsup_{n \to \infty} \frac{1}{n} \log \left| B(e, \, n) \right|. 
% \end{equation}

For $x$, $y$, $z \in \Gamma$, define the \emph{Gromov product} $\gp[x]{y}{z}$ of $y$ and $z$ with respect to $x$ by
\[
  \gp[x]{y}{z} := \frac{1}{2} \left\{ d(x,\, y) + d(x,\, z) - d(y,\, z) \right\}.
\]
The group $\Gamma$ is called \emph{(Gromov) hyperbolic} if there is a constant $\delta \geq 0$ such that
\[
  \gp[w]{x}{y} \geq \min \left\{ \gp[w]{x}{z},\, \gp[w]{y}{z} \right\} - \delta
\]
for all $x$, $y$, $z$ and $w \in \Gamma$.

Now assume that the hyperbolic group $\Gamma$ is nonelementary, that is, $\Gamma$ is not finite nor virtually $\Z$. Let $\mu$ be an admissible probability measure on $\Gamma$. Here a measure is called \emph{admissible} if its support generates $\Gamma$ as a semigroup. Throughout this paper, we assume that $\mu$ is \emph{superexponential} in the sense that $\sum_{x \in \Gamma} \e^{r d(e,\, x)} \mu (x) < \infty$ for all $r \geq 0$, and
is \emph{symmetric} in the sense that $\mu(x^{-1})=\mu(x)$ for all $x\in\Gamma$. Let $Y=(Y_n)_{n=0}^{\infty}$ be the random walk with step distribution $\mu$ and $p_n(x,\, y) = \P \left( Y_n = y \, |\, Y_0 = x \right)$ the associated heat kernel. For $r \geq 0$ and $x$, $y \in \Gamma$, define the \emph{(weighted) Green function}
$$G_r(x,\, y) = \sum_{n=0}^{\infty} r^n p_n(x,\, y).$$
The \emph{spectral radius} $\rho$ is the reciprocal of the convergence radius of the series $G_r(x,\, y)$ and is independent of the choices for $x$ and $y$. Since $\Gamma$ is nonelementary, we have that $\rho \in (0,\, 1)$. Furthermore, $G_{\rho^{-1}}(x,\, y)$ is finite for any $x$, $y \in \Gamma$; see~\cite[Theorem II.7.8]{W00}. Set
\[
  H_n(r) := \sum_{x \in \bS_n} G_r(e,\, x) \quad \text{and}\  H(r) := \limsup_{n \to \infty} \left\{ H_n(r) \right\}^{1/n}\quad \text{for}\ r \leq \rho^{-1}.
\]

Recall that $\nu$ is a probability measure on $\N=\{1,\, 2,\, \ldots\}$ with mean $\lambda \in [1,\, \infty)$. Consider the ${\rm BRW}(\Gamma,\, \nu, \, \mu)$. Let $\sP$ be the set of points in $\Gamma$ that are ever visited by the
${\rm BRW}(\Gamma, \, \nu, \, \mu)$. Denote by $M_n$ be the number of points $x \in \sP$ with $|x| = n$. Our first main result is on the growth rate for $M_n$ in the transient regime $\lambda \in [1,\, \rho^{-1}]$. As stated before, if $\lambda > \rho^{-1}$, then the ${\rm BRW}(\Gamma, \, \nu, \, \mu)$ is recurrent and hence $M_n = |\bS_n|$.

\begin{thm}
  \label{T:main1}
Assume $\mu$ is an admissible, superexponential, symmetric probability on a nonelementary hyperbolic group $\Gamma$, and $\nu$ is a probability on $\N$ with mean $\lambda \in [1,\, \rho^{-1}]$. Consider the ${\rm BRW}(\Gamma, \, \nu, \, \mu)$ starting at $e$. Then
  \[
    H(\lambda) = \limsup_{n \to \infty} M_n^{1/n} \quad \text{a.s.}
  \]
  Furthermore, $H(\lambda)\in\left[1,\, \e^{v/2}\right]$ is continuous and strictly increasing in $\lambda \in [1,\, \rho^{-1}]$, and has critical exponent $1/2$ at $\rho^{-1}$ in the sense that for some constant $C > 0$,
  \begin{equation}
    \label{e:Phicriticle}
    H(\rho^{-1}) - H(\lambda) \sim C \sqrt{\rho^{-1} - \lambda} \quad \text{as\ } \lambda \uparrow \rho^{-1}.
  \end{equation}
\end{thm}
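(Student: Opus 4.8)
The plan is to identify $M_n$ with a quantity closely tied to the weighted Green function along the group, and to transfer the tree techniques of Hueter--Lalley and Candellero--Gilch--M\"uller to the hyperbolic setting via Ancona's inequalities. First I would show that $\E M_n = \sum_{x \in \bS_n} \sum_{k \geq 0} \lambda^k p_k(e,x) = \sum_{x \in \bS_n} G_\lambda(e,x) = H_n(\lambda)$, so that $\limsup_n (\E M_n)^{1/n} = H(\lambda)$; the upper bound $\limsup_n M_n^{1/n} \leq H(\lambda)$ a.s.\ then follows from a Borel--Cantelli / first-moment argument once one knows $H_n(\lambda)$ does not oscillate too wildly, which uses the subadditivity-type estimate $H_{n+m}(\lambda) \leq C\, H_n(\lambda) H_m(\lambda)$ coming from Ancona's theorem (the Green function is ``almost multiplicative'' along geodesics). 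For the matching lower bound I would run a second-moment / many-to-two argument on a suitably pruned subtree of the genealogical tree of the BRW: restrict attention to particles that move along (a fixed thin neighborhood of) a geodesic ray, so that the branching structure becomes effectively that of a supercritical Galton--Watson-like process with mean $\sim H(\lambda)$ per unit length, and apply a standard $L^2$ / Paley--Zygmund argument together with a $0$--$1$ law for the event $\{\limsup M_n^{1/n} \geq H(\lambda) - \varepsilon\}$ (this event is inherited by at least one of the finitely many subtrees rooted at the children of the origin, giving the a.s.\ constant value).

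The second half of the theorem --- that $H(\lambda)$ is continuous, strictly increasing, bounded by $\e^{v/2}$, and has critical exponent $1/2$ at $\rho^{-1}$ --- I would extract from an analytic study of $H(r) = \limsup_n H_n(r)^{1/n}$ as a function of $r \in (0, \rho^{-1}]$, independent of the BRW. The key input is Gou\"ezel's heat-kernel asymptotics~\eqref{e:hkdecay}: $p_n(x,y) \sim C_{x,y}\, \rho^n n^{-3/2}$, which implies that $G_r(e,x)$ extends continuously to $r = \rho^{-1}$ with $G_{\rho^{-1}}(e,x) < \infty$, and, more importantly, that the ``derivative in $r$'' of $G_r(e,x)$ blows up like $(\rho^{-1} - r)^{-1/2}$ as $r \uparrow \rho^{-1}$ (since $\sum_n n\, \rho^n n^{-3/2} r^n/\rho^n \sim c (\rho^{-1}-r)^{-1/2}$). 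Combining this with Ancona's inequality, which says $G_r(e,x) \asymp \prod_{i} G_r(x_{i-1}, x_i)$ along a geodesic $e = x_0, x_1, \dots, x_n = x$ up to multiplicative constants, I would set up a thermodynamic-formalism / Perron--Frobenius description of $H(r)$: realize $H_n(r)$ (up to bounded factors) as the $n$-th power growth of a transfer operator whose kernel is built from the ``Green metric'' weights $G_r$, so that $H(r)$ is the exponential of the topological pressure $P(r)$ of a H\"older potential depending real-analytically on the finitely many parameters, except for the square-root singularity in $r$ at $\rho^{-1}$ inherited from $G_r$. Real-analyticity of pressure away from $r = \rho^{-1}$ yields strict monotonicity and continuity of $H$ on $[1,\rho^{-1})$, continuity at $\rho^{-1}$ follows from dominated convergence using $G_{\rho^{-1}}(e,x) < \infty$, and the square-root singularity of $r \mapsto G_r$ passes through the (smooth, strictly monotone) pressure functional to give $H(\rho^{-1}) - H(\lambda) \sim C\sqrt{\rho^{-1}-\lambda}$; the upper bound $H(r) \leq \e^{v/2}$ comes from Cauchy--Schwarz, $H_n(r) = \sum_{x\in\bS_n} G_r(e,x) \leq |\bS_n|^{1/2} \big(\sum_{x\in\bS_n} G_r(e,x)^2\big)^{1/2}$, together with a bound $\sum_{x\in\bS_n} G_r(e,x)^2 \leq C$ valid for $r \leq \rho^{-1}$ (a reversibility/spectral estimate: $\sum_x G_r(e,x)^2 = G_{r^2}^{(2)}(e,e)$-type quantity, finite since $r^2 \leq \rho^{-2}$ and the walk is $\rho$-transient).

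The main obstacle I expect is making the Ancona-inequality comparison uniform enough to run both the probabilistic second-moment argument and the thermodynamic-formalism analysis. On trees or free products Ancona's inequality is an exact multiplicativity (the Green function literally factors along the unique geodesic), which is what lets Hueter--Lalley and Candellero--Gilch--M\"uller write down an explicit algebraic equation for $\theta(\lambda)$; in a general hyperbolic group one only has multiplicativity up to a constant and only along (quasi-)geodesics that may not be unique, and one must further control the dependence of Ancona's constant on $r$ as $r \to \rho^{-1}$ (here the strong form of Ancona's inequality for hyperbolic groups, with constants uniform in $r \leq \rho^{-1}$, due to Gou\"ezel and Gou\"ezel--Lalley, is essential). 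Turning the multiplicative-up-to-constants structure into a genuine subadditive/transfer-operator object --- e.g.\ by passing to the automatic structure of $\Gamma$ (Cannon's geodesic automaton) and building a finite-state weighted graph whose largest eigenvalue is exactly $H(r)$ --- is the technical heart of the paper; once that is done, continuity, strict monotonicity, the $\e^{v/2}$ bound, and the critical exponent $1/2$ all reduce to the now-standard analytic facts about~\eqref{e:hkdecay} and about pressure functions of H\"older potentials, and the a.s.\ identification $\limsup M_n^{1/n} = H(\lambda)$ follows from the two-sided moment bounds plus the $0$--$1$ law.
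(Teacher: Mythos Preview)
Your overall plan---first moment for the upper bound, second moment for the lower bound, Cauchy--Schwarz for $H\leq\e^{v/2}$, and the Cannon automaton plus transfer operators for the critical exponent---matches the paper's architecture; in particular \S\ref{s:exponent} carries out exactly the thermodynamic-formalism argument you outline, with Lemma~\ref{L:Hphir} supplying $\|\varphi_r-\varphi_{\rho^{-1}}\|\leq C\sqrt{\rho^{-1}-r}$ on the potential and Theorem~\ref{T:exponent} pushing it through the pressure.

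The gap is in your lower bound for $\limsup M_n^{1/n}$. Restricting to particles in a thin tube around one geodesic ray does \emph{not} yield growth $H(\lambda)$: the expected number of BRW particles ever visiting a fixed point $x$ at distance $n$ along the ray is $G_\lambda(e,x)$, which by Ancona decays exponentially in $n$ and is exponentially smaller than $H_n(\lambda)=\sum_{|y|=n}G_\lambda(e,y)$; the growth $H(\lambda)$ comes from the entire sphere, so a single-ray restriction cannot recover it. The paper instead runs the second moment directly on $M_n=|\sP_n|$: many-to-two gives $\E[Z_xZ_y]\leq\sigma^2\sum_zG_\lambda(e,z)G_\lambda(z,x)G_\lambda(z,y)$ (Lemma~\ref{L:Zxy}), and the key technical step is Lemma~\ref{L:dxy2l}, which bounds $\sum_{x,y\in\bS_n,\,d(x,y)=k}\sum_z G_\lambda(e,z)G_\lambda(z,x)G_\lambda(z,y)\leq c\,H(\lambda)^{n+k/2}$ by approximating $(e,x,y,z)$ by a tripod and applying Ancona along each leg; summing over $k$ yields $\E[M_n^2]\leq c\,(\E M_n)^2$, after which Paley--Zygmund plus your 0--1 law (Lemma~\ref{L:nonrandom}) finishes. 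Two smaller corrections: since $M_n$ counts visited \emph{vertices}, $\E M_n=\sum_{|x|=n}\P(Z_x\geq1)\leq H_n(\lambda)$, not $=$, and a separate Paley--Zygmund on each $Z_x$ is needed to get $\E M_n\geq c\,H(\lambda)^n$ (Corollary~\ref{C:EMnlower}); and $\sum_xG_r(e,x)^2=\partial_r[rG_r(e,e)]$ diverges as $r\uparrow\rho^{-1}$, so the bound $H(\rho^{-1})\leq\e^{v/2}$ requires the continuity of $H$, which the paper proves elementarily in Theorem~\ref{T:Hr} (Fekete plus a large-deviation bound on $|Y_m|$) rather than via pressure.
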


Let $\partial \Gamma$ be the hyperbolic boundary of the metric space $(\Gamma, \, d)$. The Gromov product can be naturally extended to $\partial \Gamma$. A metric $d_a$ on $\partial \Gamma$ is said to be \emph{visual} with parameter $a > 1$ if there is a positive constant $c$ such that
\[
  c^{-1} \, a^{- \gp[e]{\xi}{\xi'}} \leq d_a(\xi,\, \xi') \leq c \, a^{- \gp[e]{\xi}{\xi'}}
\]
for all $\xi$, $\xi' \in \partial \Gamma$. Note that the visual metric $d_a$ exists for $a-1>0$ sufficiently small, and $(\partial \Gamma,\, d_a)$ is a compact metric space. Furthermore, any two visual metrics (not necessarily with the same parameters) define the same topology. By~\cite{C93}, the Hausdorff dimension $\dim_H(\partial\Gamma)$ of $(\partial\Gamma, \, d_a)$ is given by $v / \log a$. 

The accumulation points of the set $\sP$ in $\Gamma \cup \partial \Gamma$ form a random closed subset $\Lambda \subset \partial \Gamma$, which is called the \emph{limit set} of ${\rm BRW}(\Gamma,\, \, \nu,\, \, \mu)$.
As stated before, when $\lambda > \rho^{-1}$, ${\rm BRW}(\Gamma,\, \nu,\, \mu)$ is recurrent and hence $\Lambda = \partial \Gamma$ a.s. In this paper, we prove in the regime $\lambda \in [1,\, \rho^{-1})$ that the Hausdorff dimension of $\Lambda$ is the logarithm of the volume growth rate for the BRW. The critical case $\lambda = \rho^{-1}$ remains open.

\begin{thm}
  \label{thm1.1}
  Assume $\lambda \in [1, \, \rho^{-1})$. Let $\mu$ be an admissible, superexponential, symmetric probability on a nonelementary hyperbolic group $\Gamma$, and $\nu$ a probability on $\N$ with mean $\lambda$.
  Then the Hausdorff dimension $\dim_H(\Lambda)$ of the limit set $(\Lambda, \, d_a)$ of $\BRW(\Gamma, \, \nu, \, \mu)$ is $h(\lambda) = \log_a H(\lambda)$ almost surely. In particular, $h(\lambda)$ is continuous and strictly increasing in $\lambda \in [1, \, \rho^{-1})$, $h(\rho^{-1}-) \leq \frac{1}{2} \dim_H (\partial \Gamma)$, and has critical exponent $1/2$ at $\rho^{-1}$ in the sense that
  \[
    h(\rho^{-1} - ) - h(\lambda) \sim \frac{C}{H(\rho^{-1}) \log a} \, \sqrt{\rho^{-1} - \lambda} \quad \text{as } \rho \uparrow \rho^{-1}, 
  \]
  where $C$ is the constant in Theorem~\ref{T:main1}. 
% \begin{eqnarray*}
% &&h(\lambda)=\log_aH(\lambda)\leq \frac{1}{2}\dim (\partial\Gamma)\ \mbox{is continuous and strictly increasing}\\
% &&\mbox{in}\ \lambda\in [1,\rho^{-1}),\ h(\rho^{-1}-):=\lim\limits_{\lambda\uparrow \rho^{-1}}h(\lambda)=\log_aH(\rho^{-1});
% \end{eqnarray*}
% and $h(\lambda)$ has critical exponent $1/2$ near $\rho^{-1}$ in a ``weak" sense that for $C$ in Theorem~\ref{T:main1},
% $$h(\rho^{-1}-)-h(\lambda)\sim \frac{C}{H(\rho^{-1})\log a}\sqrt{\rho^{-1}-\lambda}\ \mbox{as}\ \lambda\uparrow \rho^{-1}.$$
\end{thm}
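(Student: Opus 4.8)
Everything reduces to the equality $\dim_H(\Lambda)=h(\lambda):=\log_aH(\lambda)$ a.s. Granting it, the rest follows from Theorem~\ref{T:main1} by applying $\log_a$: $h=\log_a\circ H$ is continuous and strictly increasing on $[1,\rho^{-1})$; continuity of $H$ on $[1,\rho^{-1}]$ with $H(\rho^{-1})\le\e^{v/2}$ and $\dim_H(\partial\Gamma)=v/\log a$ gives $h(\rho^{-1}-)=\log_aH(\rho^{-1})\le\tfrac12\dim_H(\partial\Gamma)$; and from $h(\rho^{-1}-)-h(\lambda)=(\log a)^{-1}\log\!\big(1+(H(\rho^{-1})-H(\lambda))/H(\lambda)\big)$ together with \eqref{e:Phicriticle} one gets $h(\rho^{-1}-)-h(\lambda)\sim\frac{C}{H(\rho^{-1})\log a}\sqrt{\rho^{-1}-\lambda}$ as $\lambda\uparrow\rho^{-1}$. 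If $\lambda=1$ then $\nu=\delta_1$, the BRW is a single transient random walk, $\Lambda$ is one point and $H(1)=1$, so there is nothing to prove; thus assume $\lambda\in(1,\rho^{-1})$, so that $H(\lambda)>1$.

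For the \emph{upper bound} $\dim_H(\Lambda)\le h(\lambda)$ I would adapt the covering argument of \cite{HL00,CGM12}. For $x\in\Gamma$ put $\mathrm{Sh}(x)=\{\xi\in\partial\Gamma:\gp[e]{\xi}{x}\ge|x|-K\}$ with $K=K(\delta)$, so $\diam_{d_a}\mathrm{Sh}(x)\le Ca^{-|x|}$ by hyperbolicity. Any $\xi\in\Lambda$ is the limit of visited vertices $w$ with $\gp[e]{\xi}{w}$ arbitrarily large; for such a $w$ the point $x$ of $[e,w]$ at distance $n$ from $e$ satisfies $\gp[e]{\xi}{x}\ge n-\delta$, so $\xi\in\mathrm{Sh}(x)$, with $x$ in the set $\Pi_n$ of level-$n$ vertices lying on a geodesic from $e$ to a visited vertex. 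Thus $\{\mathrm{Sh}(x):x\in\Pi_n\}$ covers $\Lambda$ at scale $a^{-n}$, and the combinatorial (cone-type) structure of $\Gamma$ underlying the proof of Theorem~\ref{T:main1} gives $\limsup_n|\Pi_n|^{1/n}=H(\lambda)$ a.s. (in the free group $\Pi_n=\sP\cap\bS_n$). Hence, for $s>h(\lambda)$ and $n$ large enough that $|\Pi_n|\le(H(\lambda)+\varepsilon)^n$ with $a^s>H(\lambda)+\varepsilon$, $\mathcal H^s(\Lambda)\le\liminf_n|\Pi_n|(Ca^{-n})^s=0$ a.s., so $\dim_H(\Lambda)\le h(\lambda)$ a.s.

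The \emph{lower bound} $\dim_H(\Lambda)\ge h(\lambda)$ is the core; I would prove it by the energy method. Fix $s<h(\lambda)$. For $x\in\Gamma$ choose $\bar x\in\partial\Gamma$ on a fixed geodesic ray through $x$ and set $\widehat\sigma_n=\E[M_n]^{-1}\sum_{x\in\sP\cap\bS_n}\delta_{\bar x}$, so $\E[\widehat\sigma_n(\partial\Gamma)]=1$. The key analytic input is the two-point bound
\[
  \P(x\in\sP,\ y\in\sP)\ \le\ C\,\frac{G_\lambda(e,x)\,G_\lambda(e,y)}{G_\lambda(e,z)},\qquad z\ \text{near}\ [e,x]\cap[e,y],\ \ |z|\asymp\gp[e]{x}{y},
\]
proved by decomposing the event along the first particle that enters a neighbourhood of the branch point $z$ and then using the branching of mean $\lambda$ together with Ancona's inequality $G_\lambda(e,x)\asymp G_\lambda(e,z)\,G_\lambda(z,x)$, which holds for superexponential random walks on hyperbolic groups by Gou\"ezel~\cite{G14,G15}; this is a localized version of Hutchcroft's non-intersection estimates~\cite{H19}. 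Inserting this bound and the growth estimates underlying Theorem~\ref{T:main1} into $\E[\widehat\sigma_n(\partial\Gamma)^2]$ and into the expected $s$-energy $\E\!\iint d_a(\xi,\eta)^{-s}\,\rd\widehat\sigma_n(\xi)\,\rd\widehat\sigma_n(\eta)$, the geometric series over the branch level $j=\gp[e]{x}{y}$ converge precisely when $a^s<H(\lambda)$, i.e. $s<h(\lambda)$; hence $\sup_n\E[\widehat\sigma_n(\partial\Gamma)^2]<\infty$ and $\sup_n\E[\mathcal E_s(\widehat\sigma_n)]<\infty$. Taking a weak subsequential limit $\sigma$ along a sequence realizing $\limsup_nM_n^{1/n}=H(\lambda)$, Fatou gives finite $s$-energy, Paley--Zygmund (via the second-moment bound) gives $\P(\sigma\ne0)>0$, and $\sigma$ is carried by accumulation points of $\sP$, hence by $\Lambda$. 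By Frostman's lemma~\cite{M95}, $\P(\dim_H(\Lambda)\ge s)>0$; decomposing $\Lambda$ along the first generation and using that $\Gamma$ acts on $(\partial\Gamma,d_a)$ by bi-H\"older maps shows $\dim_H(\Lambda)$ is a.s. constant, so $\dim_H(\Lambda)\ge s$ a.s. Letting $s\uparrow h(\lambda)$ completes the proof.

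The step I expect to be the main obstacle is the two-point bound. One must show that the event of visiting two boundary-separated points $x,y$ essentially factorizes, through a branch point $z$, into reaching $z$ and then two nearly independent sub-BRWs reaching $x$ and $y$, with the multiplicative cost controlled by Ancona's inequality; bounding the contribution of a single genealogical line that visits both points, and summing over all candidate branch points near the two geodesics, are the delicate parts, and this is exactly where hyperbolicity --- via Gou\"ezel's Ancona-type ratio estimates --- enters. Secondary technical points are the identity $\limsup_n|\Pi_n|^{1/n}=H(\lambda)$ in the upper bound and, in the lower bound, the nontriviality of the limiting measure $\sigma$ and the $0$--$1$ law for $\dim_H(\Lambda)$.
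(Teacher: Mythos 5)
Your reduction of the monotonicity, continuity, and critical-exponent assertions to Theorem~\ref{T:main1} is exactly what the paper does and is correct, and your lower-bound plan is in essence the paper's argument: the paper's Lemma~\ref{L:Zxy} gives $\E[Z_xZ_y]\le\sigma^2\sum_z G_\lambda(e,z)G_\lambda(z,x)G_\lambda(z,y)$ by expanding over pairs of particles (no ``first-passage through a branch-point neighbourhood'' decomposition needed), Lemma~\ref{L:dxy2l} sums this over pairs on $\bS_n$ at a given distance using Ancona's inequalities, and the energy/Frostman step and the $0$--$1$ law (Lemma~\ref{L:nonrandom}) are then the same as in your sketch. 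Your stated two-point bound $\P(x,y\in\sP)\lesssim G_\lambda(e,x)G_\lambda(e,y)/G_\lambda(e,z)$ is, after applying Ancona to factor through the branch point, equivalent to the dominant term in the paper's sum, so this part is the same route with slightly different bookkeeping.

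The genuine gap is in your upper bound. You cover $\Lambda$ at a \emph{single} level $n$ by fixed-parameter shadows $\mathrm{Sh}(x)$, $x\in\Pi_n$, where $\Pi_n$ is the set of level-$n$ points lying on a geodesic from $e$ to some visited vertex, and you assert $\limsup_n|\Pi_n|^{1/n}=H(\lambda)$ as following ``from the cone-type structure.'' This is not proved anywhere, and it is not obvious: on a non-free hyperbolic group the trace of the BRW need not pass through the level-$n$ projection of a distant visited vertex $w$, and the only control you have (via Lemma~\ref{L:gfoutside}) is that some visited point lies within distance $\varepsilon|w|$ of $\pi_n(w)$ --- a window that grows with $|w|$, not with $n$, so it does not transfer the bound $M_n^{1/n}\to H(\lambda)$ to $|\Pi_n|$. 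The paper circumvents exactly this difficulty: its Lemma~\ref{L:cover} covers $\Lambda$ by shadows $\mho(x,\varepsilon m)$ with $x\in\sP_m$ (actually visited points) over \emph{all} levels $m\ge n$, with radius $\varepsilon m$ growing linearly, so that the resulting sum $\sum_{m\ge n}M_m\,a^{-(1-\varepsilon)hm}$ only needs $\limsup M_m^{1/m}\le H(\lambda)$, i.e.\ Lemma~\ref{L:Mnupper}, plus a final $\varepsilon\to0$. To repair your version you would need to prove the $\Pi_n$ growth estimate, and that is essentially as hard as (and likely passes through) the growing-shadow covering of Lemma~\ref{L:cover}; in the free/virtually-free case your identity $\Pi_n=\sP\cap\bS_n$ does hold by the tree structure, which is exactly the observation behind Remark~\ref{R:tree-conj}, but it does not extend to general hyperbolic groups.
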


\begin{remark}
  \begin{enumerate}[(i)]
  % \item When $\lambda\in [1,\rho^{-1}),$ the ${\rm BRW}(\Gamma,\nu,\mu)$ has a linear speed given by Lemma~\ref{L:speedsub}, which plays a key role in proving $\dim_H(\Lambda)\leq\log_aH(\lambda)$ almost surely.
  % But there is a sudden change for the critical ${\rm BRW}(\Gamma,\nu,\mu)$ such that the BRW can has a slow but cannot have a fast logarithmic speed. See Remark~\ref{R:critical-speed}. Note $\dim_H(\Lambda)$ is a local property of $(\Lambda,d_a),$ while the mentioned speed in the critical case is a global behaviour for the ${\rm BRW}(\Gamma,\nu,\mu)$. So slow logarithmic speed for the critical ${\rm BRW}(\Gamma,\nu,\mu)$ is not an obstacle to Conjecture~\ref{Conjecture}.

  % Note that when $\Gamma$ is the free group $\F_d$, the hyperbolic boundary equipped with the visual metric with parameter $a$ is precisely the metric space $(\partial \F_d, \, d_a)$ defined prior to Theorem~\ref{T:HL}. In this case, it is easy to see that % the limit set $\Lambda$ is contained in $\bigcup_{x \in \sP \cap \bS_n} \mho(x,\, \kappa)$ for every $n \ge 1$ and $\kappa > 0$. Therefore the inequality
%  the inequality $\dim_H(\Lambda) \leq \log_a H(\lambda)$ and hence Theorem~\ref{thm1.1} holds true for the critical case $\lambda = \rho^{-1}$ when $\mu$ is finitely supported. Thus our main results extend Theorem~\ref{T:HL} to the BRWs with finite range jumps.
%
  \item As we will see in the proof of Theorem~\ref{T:Hr} that, for $\lambda < \rho^{-1}$, the inequality $\log H(\lambda) \leq v / 2$ holds on any nonamenable groups. See~\cite[\S 8]{LS97} for an explanation why $1/2$ appears here, using the ``backscattering principle''.

  \item The critical exponent $1/2$ is universal among nonelementary hyperbolic groups in the sense that it does not depend on the particular hyperbolic groups, the offspring distributions and the base motion of the BRWs. On general nonamenable groups, the exponent is not necessarily $1/2$. For example, it is proved in~\cite[Theorem 3.10]{CGM12} that, on free product of (not necessarily finite) groups the critical exponent is $1$ or $1/2$ according to whether the Green function $G_\lambda(e,\, e)$ is differentiable (from left) at the critical point $\lambda = \rho^{-1}$ or not.

  \item % It is unknown whether $H(\lambda)=\limsup\limits_{n\rightarrow\infty}M_n^{1/n}\ a.s.$ in Theorem~\ref{T:main1} can be strengthened as $H(\lambda)=\lim\limits_{n\rightarrow\infty}M_n^{1/n}\ a.s..$
  When $\nu(0) > 0$, i.e., the particles have zero offspring with positive probability, by slightly modifications, one can show that Theorems~\ref{T:main1} and~\ref{thm1.1} hold on the event that $\brw(\Gamma, \, \nu, \, \mu)$ survives.
  \end{enumerate}
\end{remark}

\vskip1em

% Given any probability $\nu$ on $\N$ with $\lambda=\rho^{-1}.$ Take a sequence $\{\nu_n\}_{n=1}^{\infty}$ of probabilities on $\N$ converging weakly to $\nu$ such that $\lambda_n=\sum_{k=1}^{\infty}k\nu_n(k)<\rho^{-1},$ and
% $\nu_n$ is stochastically dominated by $\nu.$ Then the critical ${\rm BRW}(\Gamma,\nu,\mu)$ on nonelementary hyperbolic group $\Gamma$ stochastically dominates any ${\rm BRW}(\Gamma,\nu_n,\mu),$ which implies that the Hausdorff dimension $\dim_H(\Lambda)$ of the critical $(\Lambda,d_a)$ is at least $\log_aH\left(\rho^{-1}\right)$ a.s.. It is a pity that we can not prove $\dim_H(\Lambda)\leq \log_aH\left(\rho^{-1}\right)$ a.s..
% But we believe the following conjecture holds.

In the critical case $\lambda = \rho^{-1}$, it is clear that $\dim_H (\Lambda) \geq \log_a H(\rho^{-1})$. We conjecture it is indeed an equality. 
\begin{conj}
\label{Conjecture}
Let $\mu$ be an admissible, superexponential, symmetric probability on a nonelementary hyperbolic group $\Gamma$, and $\nu$ a probability on $\N$ with mean $\lambda=\rho^{-1}$. Then
the limit set $(\Lambda, \, d_a)$ of ${\rm BRW}(\Gamma,\,\nu,\,\mu)$ has Hausdorff dimension $\log_a H(\rho^{-1})$ almost surely.
\end{conj}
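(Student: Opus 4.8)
The plan is to establish the matching upper bound $\dim_H(\Lambda) \le \log_a H(\rho^{-1})$ almost surely, since the lower bound is already noted in the excerpt. The overall strategy mirrors the free-group case treated by Hueter and Lalley and its extension by Candellero, Gilch and M\"uller, but the essential difficulty is that at $\lambda = \rho^{-1}$ the weighted Green function $G_{\rho^{-1}}$ sits exactly at the radius of convergence, so the geometric decay of the Green function along geodesics is only polynomially corrected (by the $n^{-3/2}$ factor from~\eqref{e:hkdecay}) rather than strictly exponential. The first step is to set up the natural covering of $\Lambda$: for each $n$, the trace $\sP$ meeting the sphere $\bS_n$ yields a collection of metric balls of radius $\asymp a^{-n}$ in $(\partial\Gamma, d_a)$ whose union covers $\Lambda$ (up to the uniform additive constant in the visual metric), and the number of such balls is exactly $M_n$. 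Then $\dim_H(\Lambda) \le \limsup_n \tfrac{1}{n}\log_a M_n = \log_a H(\rho^{-1})$ would follow immediately from Theorem~\ref{T:main1} \emph{if} we knew $H(\rho^{-1}) = \limsup_n M_n^{1/n}$ a.s.; but Theorem~\ref{T:main1} is stated for $\lambda \in [1, \rho^{-1}]$ inclusive, so in fact this crude covering already gives the conjectured upper bound directly. Hence the real content of the conjecture is subtler: the crude covering bound only controls the upper box dimension, and at criticality one must rule out that $\Lambda$ is concentrated on a much smaller set — i.e.\ the conjecture is that the Hausdorff dimension is not strictly smaller than $\log_a H(\rho^{-1})$, and this is the direction still open.

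Reframing, the plan is to prove the lower bound $\dim_H(\Lambda) \ge \log_a H(\rho^{-1})$ by constructing a random measure $\sigma$ supported on $\Lambda$ satisfying a Frostman-type condition $\sigma(B(\xi, r)) \le C r^{s}$ for every $s < \log_a H(\rho^{-1})$, with $C$ finite a.s. The natural candidate is a weak limit of the measures $\sigma_n$ that distribute mass on $\bS_n \cap \sP$ with weights proportional to the Green function contribution $G_{\rho^{-1}}(e, x)$, normalized by $H_n(\rho^{-1})$; by the first-moment computation underlying the definition of $H(r)$, these are expected to converge (along a subsequence, or after a martingale renormalization) to a nontrivial limiting measure on $\Lambda$. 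One would then need a second-moment or many-to-few estimate showing that two independent particle lines from the BRW, conditioned to be ancestors of boundary points at Gromov product $\ge m$, pay a cost $\asymp H(\rho^{-1})^{-2m}$ up to subexponential corrections — this is where Ancona-type inequalities at the spectral radius (as in Gou\"ezel~\cite{G15}) and the heat-kernel asymptotics~\eqref{e:hkdecay} enter, and where the polynomial correction terms must be shown not to destroy the exponential rate. Given the estimate $\mathbb{E}[\sigma_n \otimes \sigma_n(\{(\xi,\xi'): \gp[e]{\xi}{\xi'} \ge m\})] \le C H(\rho^{-1})^{-2m} \cdot \text{(subexp)}$, a standard energy argument (as in Mattila~\cite{M95} or Falconer~\cite{F14}) yields $\dim_H(\Lambda) \ge s$ for all $s < \log_a H(\rho^{-1})$, completing the proof.

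The main obstacle — and the reason the conjecture remains open — is precisely this second-moment estimate at criticality. For $\lambda < \rho^{-1}$ the relevant sums converge with room to spare and the subexponential corrections are harmless; but at $\lambda = \rho^{-1}$ the branching structure is "critical" relative to the random walk, the total mass $H_n(\rho^{-1}) \cdot (\text{branching growth})$ is only polynomially controlled rather than bounded, and the martingale $\sigma_n(\partial\Gamma)$ may not be uniformly integrable. Indeed in the free-group case this difficulty is overcome using the exact algebraic structure of the Green function and explicit generating-function identities (the solvability of the defining algebraic equation for $\theta(\lambda)$), which is exactly why the excerpt states the conjecture is confirmed only for free groups and free products of finite groups with their standard generating sets. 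On a general nonelementary hyperbolic group one lacks such exact formulas, and the thermodynamic-formalism machinery (Gou\"ezel~\cite{G14}, \cite{G15}) that replaces them gives the needed estimates only up to multiplicative constants and subexponential factors, which is not obviously enough to control the critical martingale. Thus the honest statement of the plan is: reduce the conjecture to a uniform-integrability / non-degeneracy statement for the critical boundary martingale $\sigma_n$, and prove that statement using the Ancona inequality at $\rho^{-1}$ together with~\eqref{e:hkdecay}; the gap lies in showing the polynomial corrections in~\eqref{e:hkdecay} do not accumulate to spoil the $n^{-1}\log$ limit, which the present techniques do not resolve in the generality of all nonelementary hyperbolic groups.
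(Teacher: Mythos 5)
There is a genuine gap, and in fact the proposal misidentifies which half of the conjecture is open. At $\lambda=\rho^{-1}$ the \emph{lower} bound $\dim_H(\Lambda)\ge \log_a H(\rho^{-1})$ is the easy direction: couple the critical BRW with a subcritical one of mean $\lambda-\varepsilon$ so that the latter's limit set is contained in $\Lambda$, apply Theorem~\ref{thm1.1} to get $\dim_H(\Lambda)\ge \log_a H(\lambda-\varepsilon)$, and let $\varepsilon\downarrow 0$ using the continuity of $H$ from Theorem~\ref{T:Hr} (this is exactly the monotonicity device used in the proof of Theorem~\ref{T:growth}). So the entire second and third paragraphs of your plan --- the critical boundary martingale, the Frostman/energy argument, the second-moment estimate you identify as ``the gap'' --- are aimed at a statement that is already known and stated as such right before the conjecture. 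The genuinely open direction is the \emph{upper} bound, and your claim that it ``follows immediately'' from the crude covering by $M_n$ balls of radius $\asymp a^{-n}$ is not justified: there is no a priori reason that every $\xi\in\Lambda$ lies, at every large scale $n$, in the shadow of some point of $\sP_n$ (or of $\sP_m$ with $m$ comparable to $n$). In the subcritical regime this covering is precisely the content of Lemma~\ref{L:cover}, whose proof rests on the positive-speed estimate of Lemma~\ref{L:speedsub}, the H\"older extension of Proposition~\ref{P:holder}, and the identification $\Lambda=\{X_\gamma\}$ of Corollary~\ref{C:Lambda}; all of these degenerate at $\lambda=\rho^{-1}$, where the bound $-v^{-1}\log(\lambda\rho)$ on the speed is zero and particles may reach distance $n$ only after superlinearly many generations (cf.\ Remark~\ref{R:critical-speed}). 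Without speed control one cannot match the generation scale, the word-length scale, and the Gromov-product scale, so the shadow parameters blow up and the covering fails; this is exactly why the confirmation for free groups and free products of finite groups in Remark~\ref{R:tree-conj} has to invoke the tree structure (monotonicity of Gromov products under projection) as a substitute.

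A smaller but telling point: your remark that the crude covering ``only controls the upper box dimension'' and therefore leaves the conjecture open has the logic backwards. Upper box dimension dominates Hausdorff dimension, so if the single-scale coverings you describe were actually available they \emph{would} give $\dim_H(\Lambda)\le\log_a H(\rho^{-1})$; the obstruction is not box-versus-Hausdorff but the absence of any proof that those coverings of $\Lambda$ exist at criticality. A useful plan for the conjecture would therefore concentrate on producing a critical analogue of Lemma~\ref{L:cover} --- e.g.\ a logarithmic-speed lower bound strong enough (relative to the visual parameter $a$, as in Remark~\ref{R:critical-speed}) to recover $\Lambda=\{X_\gamma:\gamma\in\partial\sT\}$ and a covering by shadows $\mho(x,o(|x|))$ with $x\in\sP$ --- rather than on re-deriving the lower bound.
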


When $\Gamma$ is a free group with at least 2 generators or a free product (by amalgamation) of finitely many finite groups, equipped with the natural canonical word metric,
% for finitely supported $\mu$ ($Y$ is not necessary a nearest-neighbor random walk),
Conjecture~\ref{Conjecture} holds true. Its proof depends heavily on the tree structure or the block tree structure of the corresponding Cayley graphs; see Remark~\ref{R:tree-conj} for more details.

\bigskip

We describe briefly the proofs of Theorems~\ref{T:main1} and~\ref{thm1.1} as follows.
\begin{enumerate}[(1)]
\item Our proofs rely essentially on the so-called Ancona's inequalities~\cite{A88,G14,G15}, which state that the Green function $G_\lambda(x, \, y)$ is roughly multiplicative along geodesics. In particular, using Ancona's inequalities, we can prove that $H_n(\lambda) = \sum_{x:\ d(e,\, x) = n} G_\lambda(e,\, x)$ is roughly sub- and super-multiplicative and has purely exponential growth. This kind of behavior is typical on nonelementary hyperbolic groups and Riemannian manifolds with negative curvature, and is closely related to the critical exponent $1/2$.
\item We use the first and second moment methods to prove that the growth rate for $M_n$ coincides with that of the sum $H_n(\lambda)$ of Green functions over the sphere $\bS_n$. The estimate for the two-point correlation function of the BRW is required to get the upper bound for the second moment of $M_n$ and has its own interests. Our estimates on the correlation is not optimal and may be significantly improved.
\item We prove that in the subcritical case $\lambda < \rho^{-1}$, along each geological path $\gamma$ the BRW has a positive speed and thus converges to a unique limit point $X_\gamma$ in the hyperbolic boundary $\partial \Gamma$, and further the discrepancy of a particle from the geodesic from $e$ to $X_\gamma$ is sufficiently small relative to its generation. Moreover, the map $\gamma \mapsto X_\gamma$ is continuous and the collection of all such points $X_\gamma$ is exactly the limit set $\Lambda$ of the BRW. These facts lead to a sequence of nice coverings of $\Lambda$ using the concept of shadows introduced by Sullivan~\cite{S79} and the upper bound $\dim_H(\Lambda) \leq \log_a H(\lambda)$ for the Hausdorff dimension follows. Using these ingredients, we may get further geometric properties of the limit set $\Lambda$. For examples, one can prove that $\Lambda$ is totally disconnected and that two independent subcritical BRWs  have disjoint limit sets. The latter statement is much stronger than that their traces have finitely many intersections.

  Let $\chi_n$ be the uniform distribution on $\sP_n$ and $\chi$ a weak limit point conditioned on the configuration of the BRW. Using the estimate for the second moment of $M_n = |\sP_n|$, we can prove that $\chi$ is supported on $\Lambda$ and satisfies
  \[
    \int_\Lambda \int_\Lambda \left[ d_a(x,\, y) \right]^{-h} \rd \chi(x) \rd \chi(y) < \infty
  \]
  for any $h < \log_a H(\lambda)$. The lower bound $\dim_H(\Lambda) \geq \log_a H(\lambda)$ follows from the Frostman's Lemma.
\item Using the Cannon automaton coding geodesics on a nonelementary hyperbolic group and thermodynamic formalism of the resulting symbolic dynamics, we may express $\log H(\lambda)$ as the pressure of certain transfer operators of the dynamical system. Then we show that the critical exponent for $H(\lambda)$ is $1/2$ by applying the perturbation method developed in Gou\"ezel~\cite{G14}.
\end{enumerate}

Throughout this paper, we will use $C$, $C_1$, $C_2$, $\ldots$, and $c$, $c_1$, $c_2$, $\ldots$ to denote positive constants, whose precise values are not important and may be changed from line to line. The remainder of the paper is organized as follows. In~\S\ref{s:hg}, we review some preliminaries on hyperbolic groups and (branching) random walks thereon. The proofs of Theorems~\ref{T:main1} and~\ref{thm1.1} will be divided into four steps as described above, each of them is presented in one of the following four sections~\S\ref{s:rw}-\S\ref{s:exponent}. % More precisely, we study in~\S\ref{s:rw} the growth rate for the Green function of random walks on hyperbolic groups; Then we investigate in~\S\ref{s:vgr} the volume growth rate for the branching random walks and establish in~\S\ref{s:dim} its relation to the Hausdorff dimension of the limit set of branching random walks under visual metrics;  in~\S\ref{s:exponent}.

\section{Preliminaries}
\label{s:hg}
In this section we review some necessary preliminaries on random walks and branching random walks on hyperbolic groups. The readers are referred to~\cite{G87,GH90,CP93} for a general introduction to hyperbolic groups, and to~\cite{W00,K00,BHM11,G14,G15,MS20} for an introduction and recent progress of random walks on hyperbolic groups/spaces.

\subsection{Hyperbolic groups}

Let $\Gamma$ be a finitely generated group with the identity element $e$ and a finite symmetric generating set $S$. The word distance $d = d_S$ is defined by
\[
  d(x, \, y) =
  \begin{cases}
    \inf \left\{n:\ \exists s_1, \, \ldots, \, s_n \in S \text{ with\ } x^{-1}y = s_1 \cdots s_n\right\}, & x\not=y, \\
     0,              &x=y.
  \end{cases}
\]
This is the graph distance on the Cayley graph of $(\Gamma,\, S)$. Write $|x|:=d(e,\, x)$ for simplicity. For $x$, $y \in \Gamma$, let $[x, \, y]$ be an arbitrarily chosen geodesic segment in $\Gamma$ connecting $x$ and $y$.

The \emph{Gromov (inner) product} of $x$ and $y$ with respect to $z$ is defined as
  \[
    \gp[z]{x}{y} := \frac{1}{2} \left[ d(x,\, z) + d(y,\, z) - d(x,\, y) \right].
  \]
When $z = e$, write $\gp{x}{y} := \gp[e]{x}{y}$.

\begin{defn}
 Let $\delta\geq 0$. The group $\Gamma$ is $\delta$-\emph{hyperbolic (in the sense of Gromov)} if, for any $w$, $x$, $y$, $z \in \Gamma$, the following ultrametric type inequality holds
  \begin{equation}
    \label{e:gromov}
    \gp[w]{x}{y} \geq \min \left\{ \gp[w]{x}{z},\, \gp[w]{y}{z} \right\} - \delta.
  \end{equation}
Say $\Gamma$ is (Gromov) hyperbolic if it is $\delta$-hyperbolic for some $\delta \geq 0$.
\end{defn}

Note that~\eqref{e:gromov} can be rewritten as
\begin{equation}
  \label{e:4points}
  d(x,\, y) + d(z,\, w) \leq \max \left\{ d(x,\, z) + d(y,\, w),\ d(x, \, w) + d(y,\, z) \right\} + 2 \delta
\end{equation}
and interpreted as follows. There are three possible ways to divided the four points into pairs and the corresponding sums of distances are $g=d(x,\, y) + d(z,\, w),\ m=d(x,\, z) + d(y,\, w),\ s=d(x, \, w) + d(y,\, z)$. Assume that $s\leq m\leq g$ (rename the points if necessary). Then~\eqref{e:4points} is written as $g\leq m+2\delta$, that is, the greatest sum cannot exceed the mean sum by more than $2\delta$. 

There are other equivalent definitions of hyperbolicity. For example, a group $\Gamma$ is called \emph{$\delta$-hyperbolic in the sense of Rips} if all geodesic triangles in $(\Gamma,\, d)$ are $\delta$-thin, that is, each side of such a triangle is contained in the $\delta$-neighborhood of the union of the other two sides. Intuitively, any geodesic triangle is roughly isometric to a ``tripode''. In general, from~\cite[Theorem 2.12]{GH90}, there is a constant $C$ depending only on $n$ and $\delta$ such that for any subset $A$ consisting of at most $n$ points, there exists a map $\Psi$ from $A$ to a metric tree such that
\[
  d(x, \, y)- C \leq d(\Psi(x), \, \Psi(y)) \leq d(x, \, y) \text{ for any } x, \, y\in A.
\]
The Rips hyperbolicity and Gromov hyperbolicity are equivalent. More precisely,
\begin{enumerate}[(i)]
\item if a group is $\delta$-hyperbolic, then it is ($4\delta$)-hyperbolic in the sense Rips; 
\item if a group is $\delta$-hyperbolic in the sense of Rips, then it is $\delta$-hyperbolic. 
\end{enumerate}
See for example~\cite[Proposition 1.6]{CP93}. 
% \begin{enumerate}
% \item[] Any $\delta$-Gromov hyperbolic group is $4\delta$-Rips hyperbolic, and any $\delta$-Rips hyperbolic group is $3\delta$-Gromov hyperbolic (\cite{W00} p.\,244). More strongly, a group is $\delta$-Gromov hyperbolic if and only if it is $\frac{1}{2}\delta$-Rips hyperbolic  (see~\cite[Proposition 3.6]{B13} without a proof, and we don't find a concrete reference for a proof of this result). Anyhow, let $\delta'=4\delta$, then any $\delta$-Gromov hyperbolic group is both $\delta'$-Gromov and $\delta'$-Rips hyperbolic.
% \end{enumerate}

There are many examples of hyperbolic groups, including virtually free groups, cocompact finitely generated Fuchsian groups, small cancellation $1/6$ groups~\cite[\S 0.2A]{G87}, fundamental groups of compact Riemannian manifolds of negative sectional curvature~\cite[Chapitre 3]{GH90}. By~\cite{BS00}, a hyperbolic group $\Gamma$ with its word metric is roughly similar to a convex subset of some real hyperbolic space $\H^m$, that is, there exist a mapping $\Psi$: $\Gamma \to \H^m$ and $\theta > 0$, $C > 0$ such that $\left| \theta d_{\H}(\Psi(x),\, \Psi(y)) - d(x, \,  y) \right| \leq C$ for all $x$, $y \in \Gamma$, where $d_{\H}$ is the hyperbolic distance on $\H^m$.

Clearly the free groups over $q \ge 2$ symbols are $0$-hyperbolic in both Rips and Gromov senses, and in this case the Gromov product $\gp[z]{x}{y}$ is precisely the distance between $z$ and the geodesic segment $[x, \, y]$.
This remains ``roughly'' true for general $\delta$-hyperbolic group (\cite[Proposition 1.5]{CP93}):
\begin{equation}
  \label{e:roughtree}
  \gp[z]{x}{y} \leq  d(z,\, [x, \, y])  \leq \gp[z]{x}{y} + 4 \delta.
\end{equation}
%Hence for $\delta$-Gromov hyperbolic group,~\eqref{e:roughtree} becomes
%\begin{equation}
%  \label{e:roughtree1}
%  \gp[z]{x}{y} \leq  d(z,\, [x, \, y])  \leq \gp[z]{x}{y} + \delta.
%\end{equation}

\bigskip

A hyperbolic group $\Gamma$ is said to be \emph{nonelementary} if it is neither finite nor virtually $\Z$ (i.e., does not contain $\Z$ as a subgroup of finite index). In this case, $\Gamma$ has an exponential volume growth, that is, the \emph{volume entropy} $v$ defined by~\eqref{e:vrate1} is positive.
By~\cite[Theorem 7.2]{C93}, this growth is purely exponential, i.e., there is a constant $C > 1$ such that
\begin{equation}
  \label{e:vgrowth}
  C^{-1} \e^{v n} \leq \left| B(e,\, n) \right| \leq C \e^{v n},\quad \forall\, n\in\N.
\end{equation}
  
Let $\Gamma$ be a nonelementary hyperbolic group. A sequence $(x_n)$ of points in $\Gamma$ is said to \emph{converge at infinity} if $\liminf\limits_{n,\, m \to \infty} \gp[e]{x_n}{x_m} = \infty$, and two such sequences $(x_n)$ and $(y_n)$ are equivalent if $\gp[e]{x_n}{y_n} \to \infty$ as $n \to \infty$. The \emph{hyperbolic boundary} of $\Gamma$, denoted by $\partial \Gamma$, consists of the equivalent classes of sequences converging at infinity. Say a sequence $(x_n)$ converges to $\xi \in \partial \Gamma$ and write $\lim\limits_{n \to \infty} x_n = \xi$ or $x_n \to \xi$ if this sequence belongs to the equivalent class $\xi$.

The Gromov product can be extended to $\partial \Gamma$ by taking limits of Gromov products in $\Gamma$:
\begin{equation}
  \label{e:gpboundary}
  \gp[z]{\xi}{\eta} = \sup \liminf_{m,\, n \to \infty} \gp[z]{x_m}{x_n},
\end{equation}
where the supremum is taken over all sequences $(x_m)$ and $(y_n)$ in $\Gamma$ such that $\xi = \lim\limits_{m \to \infty} x_m$ and $\eta = \lim\limits_{n\to \infty} y_n$. Taking the supremum and $\liminf$ in this definition is, by no means, the only choice. In fact, all four possible choices in the definition of $\gp[z]{\xi}{\eta}$ differ by at most $2 \delta$ (c.f. V\"{a}is\"{a}l\"{a}~\cite[Lemma 5.6]{V05}). Under this extension, we have for every $x$, $y$, $z$, $w \in \Gamma \cup \partial \Gamma$ that
\begin{eqnarray}
\label{e:extend-Gromov-product}
\gp[w]{x}{y} \geq \min \left\{ \gp[w]{x}{z},\, \gp[w]{y}{z} \right\} - 2 \delta.
\end{eqnarray}

As mentioned in the introduction, $\partial \Gamma$ can be metrized following Gromov~\cite[\S 7.2]{G87}. For a real number $a > 1$ sufficiently close to $1$, there is a metric $d_a$ on $\Gamma \cup \partial \Gamma$, called the \emph{visual metric with parameter $a$}, such that the following hold:
\begin{enumerate}[(P1)]
\item The metric $d_a$ induced the canonical boundary topology on $\partial \Gamma$.
\item There is a constant $C \geq 1$ depending only on $\delta$ and $a$ such that
\begin{equation}
  \label{e:visual}
  C^{-1} a^{- \gp[e]{x}{y}}  \leq d_a(x,\, y) \leq C  a^{- \gp[e]{x}{y}},\quad \forall\, x \neq y \in \Gamma \cup \partial \Gamma.
\end{equation}
\end{enumerate}
Furthermore, both $(\Gamma \cup \partial \Gamma,\, d_a)$ and $(\partial \Gamma, \, d_a)$ are compact, and any two visual metrics are H\"older equivalent and therefore they define the same topology.

% Furthermore, $(\partial \Gamma, \, d_a)$ is an infinite perfect compact metric space, whose Hausdorff dimension depends only on $v$ and $a$.
 %, as stated below.

% We recall briefly the Hausdorff dimension of a metric space $(X, \, d)$. For more details we refer the readers to~\cite{F14, M95}. A countable collection of subsets $\left\{ U_i \right\}$ is called an $\varepsilon$-cover of a subset $E \subset X$ if $\diam U_i \leq \varepsilon$ for all $i$ and $E \subset \bigcup_i U_i$. Here $\diam A$ is the diameter of a subset $A \subset X$. For $s > 0$ and $\varepsilon > 0$, define
% \[
%   \sH_{\varepsilon}^s = \inf \left\{ \sum_{i = 1}^{\infty} \left( \diam U_i \right)^s: \ \left\{ U_i \right\} \text{ is an } \varepsilon\text{-cover of } E  \right\}.
% \]
% The \emph{$s$-dimensional Hausdorff measure} of $E$ is defined as the limit $\sH^s(E) := \lim_{\varepsilon \to 0} \sH_{\varepsilon}^s(E)$. It may be shown that $\sH^s$ is a Borel regular measure on $X$. The \emph{Hausdorff dimension} of $E$, denoted by $\dim E$, is the unique number $s$ at which $\sH^s(E)$ jumps from $\infty$ to $0$. Thus
% \[
%   \dim E = \inf \left\{ s: \ \sH^s(E) = 0 \right\} = \sup \left\{ s: \ \sH^s(E) = \infty \right\}.
% \]
% A Borel measure $\rho$ on $E$ is \emph{Ahlfors-$s$-regular} if there is $c \geq 1$ such that for any open ball $B(x,\, r)$ centered at $x$ of radius $r$ in $E$, $c^{-1} r^s \leq \rho \left( B(x,\, r) \right) \leq c r^s$. It is easy to see that, if there is an Ahlfors-$s$-regular measure $\rho$ on $E$, then $\dim E = s$ and $0 < \sH^s(E) < \infty$.

\begin{thm}[{\cite[Corollary 7.5 and Theorem 7.7]{C93}}]
  \label{T:C93}
  Let $\Gamma$ be a nonelementary hyperbolic group with volume entropy $v$. Then
  % the Hausdorff measure $\rho$ on $\partial \Gamma$ of dimension $v / \log a$ is Ahlfors-regular. In particular,
  $(\partial \Gamma, \, d_a)$ has Hausdorff dimension $v/\log a$.
\end{thm}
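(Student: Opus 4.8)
The plan is to prove the two matching inequalities $\dim_H(\partial\Gamma)\le v/\log a$ and $\dim_H(\partial\Gamma)\ge v/\log a$ by Coornaert's classical method: covering $\partial\Gamma$ by shadows for the upper bound, and building a Patterson--Sullivan measure on $\partial\Gamma$ for the lower bound. For $x\in\Gamma$ and $r\ge 0$ let $\mathcal{O}_r(x)=\{\xi\in\Gamma\cup\partial\Gamma:\ \gp[e]{x}{\xi}\ge|x|-r\}$ be the shadow of the ball $B(x,r)$ viewed from $e$. Two elementary facts, coming from \eqref{e:extend-Gromov-product} and \eqref{e:visual}, drive everything: first, $\mathcal{O}_r(x)$ has $d_a$-diameter at most $C_1(r,\delta)\,a^{-|x|}$, since any $\eta,\eta'\in\mathcal{O}_r(x)$ satisfy $\gp[e]{\eta}{\eta'}\ge|x|-r-2\delta$; second, for every $\xi\in\partial\Gamma$ and every $n$ there is $x\in\bS_n$ with $\xi\in\mathcal{O}_{c_0}(x)$ (take $x$ to be the point of $\bS_n$ on a geodesic ray from $e$ to $\xi$, using \eqref{e:roughtree}), and likewise $B_{d_a}(\xi,a^{-n})\subseteq\mathcal{O}_{c_1}(x)$ for that same $x$, where $c_0,c_1$ depend only on $\delta$ and $a$.

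For the upper bound, fix $s>v/\log a$. For each $n$ the family $\{\mathcal{O}_{c_0}(x):x\in\bS_n\}$ covers $\partial\Gamma$, has at most $|\bS_n|\le|B(e,n)|\le C\e^{vn}$ members by \eqref{e:vgrowth}, and each member has diameter $\le C_1 a^{-n}$; hence the $s$-dimensional Hausdorff content at scale $C_1a^{-n}$ is at most $CC_1^s\,\e^{vn}a^{-sn}=CC_1^s\,\e^{n(v-s\log a)}\to 0$. Thus $\mathcal{H}^s(\partial\Gamma)=0$, and letting $s\downarrow v/\log a$ gives $\dim_H(\partial\Gamma)\le v/\log a$. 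This step uses only the easy half $|B(e,n)|\le C\e^{vn}$ of \eqref{e:vgrowth}.

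For the lower bound, observe that for $s>v$ the Poincar\'e series $Z_s:=\sum_{x\in\Gamma}\e^{-s|x|}$ converges by \eqref{e:vgrowth}, so $\mu_s:=Z_s^{-1}\sum_{x}\e^{-s|x|}\delta_x$ is a probability measure on the compact space $\Gamma\cup\partial\Gamma$; let $\mu$ be a weak-$*$ limit along a sequence $s_k\downarrow v$. A short computation from the two-sided bound in \eqref{e:vgrowth} gives $Z_v=\infty$ (if $Z_v<\infty$ its tails would tend to $0$, contradicting $\sum_{k=n+1}^{2n}|\bS_k|\e^{-vk}\ge\e^{-2vn}(|B(e,2n)|-|B(e,n)|)\to C^{-1}>0$), so $\mu_s$ loses all its mass on any bounded region and $\mu$ is a probability measure supported on $\partial\Gamma$. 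The crucial estimate is the shadow lemma $\mu(\mathcal{O}_r(x))\le C_2(r,\delta)\,\e^{-v|x|}$: any $y\in\Gamma$ with $\gp[e]{x}{y}\ge|x|-r$ has $|y|\ge|x|+d(x,y)-2r$, so $\mu_s(\mathcal{O}_r(x)\cap\Gamma)\le Z_s^{-1}\e^{-s|x|}\e^{2sr}\sum_{y\in\Gamma}\e^{-s\,d(x,y)}=\e^{2sr}\e^{-s|x|}$, and after the routine care with open versus closed shadows (slightly enlarging $r$) one passes to the limit $s_k\downarrow v$. Combining this with $B_{d_a}(\xi,a^{-n})\subseteq\mathcal{O}_{c_1}(x)$ for $x\in\bS_n$ gives $\mu(B_{d_a}(\xi,r))\le C_3\,r^{v/\log a}$ for all $\xi\in\partial\Gamma$ and all small $r$; the mass distribution principle (Frostman's lemma) then yields $\mathcal{H}^{v/\log a}(\partial\Gamma)>0$, i.e. $\dim_H(\partial\Gamma)\ge v/\log a$.

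I expect the main obstacle to be the lower bound, and within it the shadow lemma together with the verification that the limiting measure genuinely lives on $\partial\Gamma$ and carries the correct ``dimension'': this is precisely where purely exponential growth \eqref{e:vgrowth} is needed in full (both inequalities, not just the upper one), and where one must keep careful track of the $\delta$-fudge factors produced by \eqref{e:extend-Gromov-product} when manipulating Gromov products involving boundary points and when comparing shadows $\mathcal{O}_r(x)$ for nearby values of $r$ during the weak-$*$ passage. Once the shadow lemma is in place, the covering estimate and the application of Frostman's lemma are routine.
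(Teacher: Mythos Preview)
The paper does not prove this theorem; it simply cites Coornaert~\cite{C93} and remarks that the proof ``relies on the idea of a shadow, which was first introduced by Sullivan~\cite{S79} to study the Patterson--Sullivan measure on hyperbolic spaces.'' Your sketch is correct and is precisely Coornaert's argument that the paper is pointing to: the upper bound via the shadow covering of $\partial\Gamma$ by $\{\mho(x,\kappa):x\in\bS_n\}$ together with the growth bound~\eqref{e:vgrowth}, and the lower bound via the Patterson--Sullivan measure, the shadow lemma, and Frostman's mass distribution principle.
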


The proof of Theorem~\ref{T:C93} replies on the idea of a shadow, which was first introduced by Sullivan~\cite{S79} to study the Patterson--Sullivan measure on hyperbolic spaces.

\begin{defn}
  \label{def:shadow}
  Let $\Gamma$ be a nonelementary hyperbolic group and $x$, $z \in \Gamma$.  The \emph{shadow} $\mho_z(x,\, \kappa)$ cast by $x$ from the light source $z$, with parameter $\kappa > 0$, is defined to be the set of points $\xi \in \partial \Gamma$ such that %$\gp[x]{z}{\xi} \leq \kappa$.
  $\gp[z]{x}{\xi} \geq d(z,\, x) - \kappa$. 
\end{defn}

We write $\mho(x, \, \kappa) = \mho_e(x, \, \kappa)$ for simplicity.
Roughly speaking, a point $\xi \in \mho_z(x,\, \kappa)$ if $x$ comes within distance $\kappa$ of any geodesic from $z$ to $\xi$.
Assume that $\Gamma$ is a nonelementary $\delta$-hyperbolic group and $\kappa>2\delta$. Then for each fixed $n \geq 1$, the shadows $\mho(x,\, \kappa)$ with $|x| = n$ cover $\partial \Gamma$ efficiently. More precisely, there is a constant $N$ such that for any $\xi \in \partial \Gamma$ and any $n \geq 1$ there is at least $1$ and are at most $N$ elements $x$ with the properties that $|x| = n$ and $\xi \in \mho(x,\, \kappa)$; see~\cite[Lemma 2.5.6]{C13}.

\subsection{Random walks on hyperbolic groups}
\label{s:rwhg}

Let $\Gamma$ be a nonelementary hyperbolic group and $\mu$ an admissible symmetric probability measure on $\Gamma$. Here $\mu$ is said to be admissible if its support generates $\Gamma$ as a semigroup. We say that $\mu$ has superexponential tails if for all $K > 1$, $\mu \left( B(e,\, n)^c \right) \leq K^{-n}$ for $n$ large enough. Equivalently, $\mu$ has superexponential tails if, for all $r > 0$, the sum $\sum_{x \in \Gamma} \e^{r |x|} \mu(x)$ is finite. Throughout this paper, we will assume that $\mu$ has superexponential tails.

The random walk $Y=(Y_n)_{n=0}^{\infty}$ starting at $x$ with step distribution $\mu$ is defined as $Y_n = x \xi_1 \xi_2 \cdots \xi_n$, where $\xi_1$, $\xi_2$, $\ldots$ is a sequence of i.i.d. random variables with common distribution $\mu$. It may be viewed as a Markov chain on $\Gamma$ whose transition probability is given by $p(x,\, y) = \mu(x^{-1}y)$. For $x$, $y \in \Gamma$ and $r \geq 0$, define the \emph{(weighted) Green function} by
\[
  G_r(x,\, y) := \sum_{n \geq 0} p_n(x, \, y) r^n,
\]
where $p_n(x, \, y) := \P (Y_n = y)$ is the \emph{heat kernel} of the random walk. Clearly, the Green function and heat kernel are invariant under the left action of $\Gamma$, that is, $G_r(x,\, y) = G_r(e,\, x^{-1}y)$ and $p_n(x, \, y) = p_n(e,\, x^{-1}y)$, where $e$ is the identity element of $\Gamma$.

Let $\rho := \limsup_{n\to\infty} \left\{p_n(x,\, y) \right\}^{1/n}$ be the \emph{spectral radius} of the random walk $Y$, that is, $\rho^{-1}$ is the convergence radius of Green function $G_r(x,\, y)$. Since the random walk is irreducible, $\rho$ is independent of $x$, $y \in \Gamma$. By~\cite[Theorem~7.8]{W00}, $Y$ is $\rho$-transient in the sense that $G_{\rho^{-1}}(x, \, y) < \infty$ for all $x$, $y \in \Gamma$. This and the dominated convergence theorem imply that $G_r(x, \, y)$ is continuous in $r\in [0, \, \rho^{-1}]$.

The so-called Ancona inequalities assert that the Green function $G_r(x, \, y)$ is roughly multiplicative along geodesics. Such inequalities were first developed by Ancona~\cite{A88} for $r < \rho^{-1}$ when identifying the Martin boundary for random walks. Recently, Gou\"ezel~\cite{G14, G15} proved that these inequalities holds uniformly for $r \leq \rho^{-1}$. This uniformity will play an important role in the proof of Theorem~\ref{T:Hr}.

\begin{thm}[Uniform Ancona inequalities, {\cite[Theorem~2.3]{G14}}, {\cite[Theorem~1.3]{G15}}]
  \label{T:ancona}
  Assume that $\mu$ is an admissible symmetric probability measure with superexponential tails on a nonelementary hyperbolic group $\Gamma$. Then there exists a constant $C\geq 1$ such that for any $x$, $z\in \Gamma$ and any $y$
  close to a geodesic segment from $x$ to $z$,
  \begin{equation}
    \label{e:ancona}
    G_r(x, \, z) \leq C G_r(x, \, y) G_r(y, \, z),\ r \in [1,\, \rho^{-1}].
  \end{equation}
The constant $C$ depends only on the distance from $y$ to the geodesic segment connecting $x$ and $z$.
\end{thm}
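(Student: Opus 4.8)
The plan is to follow Ancona's route~\cite{A88} (see also~\cite{GL13} for a streamlined exposition in the Fuchsian case), tracking precisely where $r$ enters and isolating the one place—uniform control up to $\rho^{-1}$—where the work of Gou\"ezel~\cite{G14,G15} is indispensable. Note first that only the stated upper bound has content: the reverse inequality is trivial, since restricting to trajectories through $y$ and splitting at the first visit gives $G_r(x,z)\ge F_r(x,y)G_r(y,z)=G_r(x,y)G_r(y,z)/G_r(e,e)$, where $F_r$ is the first-passage generating function. Two soft facts will be used, both uniform over $r\in[1,\rho^{-1}]$: a \emph{uniform Harnack inequality} $G_r(u,w)\le C_H^{\,d(w,w')}G_r(u,w')$, which follows from admissibility of $\mu$ (pick $k_0$ with $p_{k_0}(a,b)\ge\varepsilon_0>0$ for every pair of neighbours $a,b$, so $G_r(u,a)\ge r^{k_0}p_{k_0}(b,a)G_r(u,b)\ge\varepsilon_0 G_r(u,b)$ because $r\ge1$, then iterate); and \emph{boundedness at bounded range}, $\sup_{d(a,b)\le R}\sup_{r\le\rho^{-1}}G_r(a,b)<\infty$, which is $\rho$-transience ($G_{\rho^{-1}}(e,e)<\infty$) combined with uniform Harnack.

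Fix a large $\kappa=\kappa(\delta)$, enlarged by the assumed bound on $\dist(y,[x,z])$ so that we may take $y$ on a geodesic $[x,z]$; we may also assume $d(x,y),d(y,z)>\kappa$, otherwise the claim is immediate from the two soft facts. Writing $B=B(y,\kappa)$ and decomposing a trajectory from $x$ to $z$ according to whether it meets $B$,
\[
  G_r(x,z)\ \le\ G_r^{\complement B}(x,z)\ +\ \sum_{w,w'\in B}F_r^{\widehat B}(x,w)\,G_r(w,w')\,F_r(w',z),
\]
where $G_r^{\complement B}$ sums over trajectories avoiding $B$ and $F_r^{\widehat B}(x,w)$ is the $r$-weighted probability of first entering $B$ at $w$. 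By the two soft facts and $F_r\le G_r$, the double sum is at most $C'(\kappa,\delta)\,G_r(x,y)G_r(y,z)$. So the theorem reduces to the following \textbf{Key estimate}: for a suitable $\kappa$ and with constants independent of $r\in[1,\rho^{-1}]$, one has $G_r^{\complement B(y,\kappa)}(x,z)\le\tfrac12\,G_r(x,z)$ whenever $y$ lies on a geodesic $[x,z]$ with $d(x,y),d(y,z)$ large—i.e.\ the $r$-weighted probability of travelling from $x$ to $z$ while avoiding $B(y,\kappa)$ is at most half the total.

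For the Key estimate I would run Ancona's multi-scale argument. By $\delta$-hyperbolicity the configuration $(x,y,z)$ is coarsely tree-like: a geodesic from $x$ to $z$ passes $O(\delta)$-close to $y$, so around $y$ one can set up a finite nested family of ``slabs'' $\Sigma_0\supset\Sigma_1\supset\cdots\supset\Sigma_L$, defined through Gromov products such as $\gp[y]{x}{\cdot}$ and $\gp[y]{z}{\cdot}$, with $\Sigma_L\subset B(y,\kappa)$, so that any trajectory crossing $\Sigma_j$ but not $\Sigma_{j+1}$ is forced to make an excursion that is anomalously long, or anomalously far from $[x,z]$, at that scale. A trajectory avoiding $B(y,\kappa)$ must therefore perform such an anomalous excursion at one of the $L$ scales; summing the $r$-weighted probabilities of these excursions and bounding each via the \emph{exponential decay of $G_r$ along geodesics}—used in the form $G_r(u,w)\le C\,\e^{-\beta(r)\,d(u,w)}$ together with uniform Harnack—gives a bound of the shape $\sum_{j=1}^{L}(\text{geometric in the slab widths})\le\tfrac12$, once the widths (hence $L$) are chosen large enough in terms of $\beta(r)$.

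The catch, and the heart of the matter, is uniformity as $r\uparrow\rho^{-1}$. For $r<\rho^{-1}$ the decay $G_r(e,w)\le C_r\,\e^{-\beta(r)|w|}$ is cheap—e.g.\ from Carne--Varopoulos, $p_n(e,w)\le 2\rho^{n}\e^{-|w|^2/2n}$—but there $\beta(r)\downarrow 0$ as $r\uparrow\rho^{-1}$, so one would need $L\to\infty$ and the constant $C'$ above would blow up, exactly as in Ancona's original work. What forces the inequality to hold \emph{uniformly} on the closed interval $[1,\rho^{-1}]$ is a uniform exponential decay
\[
  \sup_{r\in[1,\rho^{-1}]}G_r(e,w)\ \le\ C\,\e^{-\beta|w|},\qquad \beta>0,
\]
and establishing this is precisely the substantial analytic contribution of Gou\"ezel~\cite{G14,G15}: it rests on his sharp heat-kernel and deviation estimates for symmetric, superexponential walks on nonelementary hyperbolic groups—in particular the local limit behaviour $p_n(e,w)\asymp C_w\rho^n n^{-3/2}$ and the resulting finiteness and exponential decay of the critical Green function $G_{\rho^{-1}}(e,\cdot)$. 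Granting this input, the slab argument above goes through with all constants independent of $r\in[1,\rho^{-1}]$, which yields the Key estimate and hence the theorem. Thus the combinatorial and geometric skeleton is classical Ancona; the genuine obstacle is the uniform control of the Green function all the way up to the spectral radius.
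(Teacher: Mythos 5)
The paper gives no proof of this theorem—it is imported verbatim from Gou\"ezel~\cite[Theorem~2.3]{G14},~\cite[Theorem~1.3]{G15}. Your sketch does reproduce the correct classical Ancona architecture: the trivial lower bound by restricting to trajectories through $y$, uniform Harnack from admissibility together with $r\ge1$, boundedness at bounded range from $\rho$-transience, the decomposition of $G_r(x,z)$ by first/last entry into $B(y,\kappa)$, and the reduction of the theorem to controlling the restricted Green function $G_r^{\complement B}(x,z)$. That part matches the structure of~\cite[\S2]{G14}.

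The gap is a circularity in the uniform-in-$r$ engine you choose. You posit a uniform exponential bound $\sup_{r\le\rho^{-1}}G_r(e,w)\le C\e^{-\beta|w|}$ and attribute it to the local limit theorem $p_n(e,w)\asymp C_w\rho^n n^{-3/2}$ and the ``resulting finiteness and exponential decay of $G_{\rho^{-1}}(e,\cdot)$''. But in~\cite{G14} the local limit theorem (Theorem~1.1 there) and the exponential decay of the critical Green function are obtained \emph{downstream} of the uniform Ancona inequality, via the thermodynamic formalism of~\cite[\S3]{G14}; they cannot serve as inputs to Theorem~2.3. The actual engine is a \emph{superexponential decay of the restricted Green function away from geodesics}: for every $K>0$ there is $N$ such that for $x,y,z$ aligned with $d(x,y),d(y,z)\ge n\ge N$ one has $G_{\rho^{-1}}(x,z;\,B(y,n)^c)\le K^{-n}$. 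This is~\cite[Lemma~2.6]{G14} (extended to unbounded support in~\cite[\S4.4]{G15}) and is restated in the present paper as Lemma~\ref{L:gfoutside}; it is proved by a self-improving bootstrap over scales using only $\rho$-transience, uniform Harnack and $\delta$-hyperbolicity, with no prior control of $G_{\rho^{-1}}(e,\cdot)$. The distinction is not cosmetic. Even granted a fixed-rate exponential decay $G_r(e,w)\le C\e^{-\beta|w|}$, your Key estimate would not close: the competing lower bound $G_r(x,y)G_r(y,z)\ge c^{d(x,z)}$ is also exponential in $d(x,z)$, so to beat the avoidance term you would need the radius $\kappa$ to grow linearly with $d(x,z)$, and the constant in the $\sum_{w,w'\in B}$ term blows up. It is precisely the \emph{superexponential-in-$n$} avoidance bound that lets one fix a single $\kappa$ and conclude uniformly, and this is the piece your proposal neither supplies nor correctly locates in Gou\"ezel's work.
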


\begin{thm}[Strong uniform Ancona inequalities, {\cite[Theorem 2.9]{G14}}]
  \label{T:ancona1}
  Under the assumptions of Theorem~\ref{T:ancona}, there are positive constants $C$ and $\eta$ such that for all points $x$, $x'$, $y$, $y'$ whose configurations is approximated by tree as follows:
  % \begin{figure*}[H]
  % \centering
  \begin{center}
    \begin{tikzpicture}
    \draw (-5,0.5) -- (-3,0) -- (3,0) -- (5,0.8);
    \draw (-4,-0.4) -- (-3, 0);
    \draw (3,0) -- (4,-0.4);
    \node[left] at (-5,0.5) {$x$};
    \node [left] at (-4,-0.4) {$x'$};
    \node [right] at (5,0.8) {$y$};
    \node [right] at (4,-0.4) {$y'$};
    \node[below] at (0,0) (n) {$\geq n$};
    \path let \p1 = (n.west) in coordinate (a) at (-3,\y1);
    \path let \p1 = (n.east) in coordinate (b) at (3,\y1);
    \draw[<-, densely dashed] (a) -- (n.west);
    \draw[->, densely dashed] (n.east) -- (b);
  \end{tikzpicture}
\end{center}
% \caption{Illustration for relative positions of points in strong uniform Ancona inequalities}
  %\label{f:ancona}
% \end{figure*}
and for any $r \in [1, \, \rho^{-1}]$,
  \begin{equation}
    \label{e:ancona1}
    \left| \frac{G_r(x, \, y) / G_r(x', \, y)}{G_r(x, \, y') / G_r(x', \, y')} - 1 \right| \leq C \e^{-\eta n}.
  \end{equation}
\end{thm}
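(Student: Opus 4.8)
The plan is to run Ancona's classical ``chain of separating surfaces'' argument in the uniform-in-$r$ form due to Gou\"ezel~\cite{G14,G15}, and to read off the exponential decay in~\eqref{e:ancona1} from a contraction estimate for a composition of positive transfer kernels. First I would pass to a clean model. By the tree approximation for finitely many points (\cite[Theorem~2.12]{GH90}) applied to $x,x',y,y'$ together with a geodesic quadruple joining them, there is a geodesic segment $\gamma$ from a point $w_0$ lying within bounded distance of both $x$ and $x'$ to a point $w_m$ lying within bounded distance of both $y$ and $y'$, of length at least $n-O(1)$, which stays within bounded distance of every geodesic from $\{x,x'\}$ to $\{y,y'\}$. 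Fixing a large constant $L$ I would place points $w_0,w_1,\dots,w_m$ along $\gamma$ with consecutive gaps equal to $L$, so that $m\asymp n$, and attach to each $w_j$ a ``slab'' $S_j$: a set of group elements at bounded distance from $w_j$ that separates $\{x,x'\}$ from $\{y,y'\}$ in the Cayley graph. Thinness of geodesic triangles guarantees that such slabs exist, are nested between the two ends, and that $x,x'$ lie strictly before $S_1$ while $y,y'$ lie strictly after $S_m$; moreover, Theorem~\ref{T:ancona} and hyperbolicity show that $r$-weighted paths straying far from $\gamma$ carry total weight at most $e^{-cn}$, uniformly in $r\le\rho^{-1}$, so that up to such errors every relevant path may be taken to meet the near part of every $S_j$.

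\emph{Decomposition.} For any point $z$ lying beyond $S_j$, decomposing each path from $x$ to $z$ at its first visit to $S_j$ (each such path visits $S_j$, since $S_j$ separates $x$ from $z$) yields the exact identity
\[
  G_r(x,z)=\sum_{u\in S_j}h^x_j(u)\,G_r(u,z),\qquad
  h^x_j(u):=\mathbb{E}_x\bigl[r^{\tau_{S_j}}\,\id_{\{Y_{\tau_{S_j}}=u\}}\bigr],
\]
and likewise with $x$ replaced by $x'$. Iterating this identity from one slab to the next defines nonnegative transfer kernels $\Phi_j$ on $S_j\times S_{j+1}$ with $h^x_{j+1}=h^x_j\Phi_j$. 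Applying the identity with $z\in\{y,y'\}$ and $j=m$, one obtains
\[
  \frac{G_r(x,y)/G_r(x',y)}{G_r(x,y')/G_r(x',y')}
  =\frac{\langle h^x_m,\,G_r(\cdot,y)\rangle\,\langle h^{x'}_m,\,G_r(\cdot,y')\rangle}
        {\langle h^{x'}_m,\,G_r(\cdot,y)\rangle\,\langle h^x_m,\,G_r(\cdot,y')\rangle}.
\]
Writing $h^x_m=\phi\cdot h^{x'}_m$ pointwise, for any nonnegative $g$ the quotient $\langle h^x_m,g\rangle/\langle h^{x'}_m,g\rangle$ is an $h^{x'}_m$-weighted average of $\phi$, hence lies between $\min\phi$ and $\max\phi$; therefore the displayed quantity lies between $(\min\phi)/(\max\phi)$ and $(\max\phi)/(\min\phi)$, and so differs from $1$ by at most $e^{\,d_H(h^x_m,\,h^{x'}_m)}-1$, where $d_H$ denotes Hilbert's projective metric on the positive cone.

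\emph{Contraction.} It remains to bound $d_H(h^x_m,h^{x'}_m)$ by $e^{-\eta n}$. The input is a uniform Doeblin-type estimate for consecutive transfer kernels: combining the weak uniform Ancona inequality (Theorem~\ref{T:ancona}) with the resulting Harnack inequality for the $r$-weighted walk, there is a constant $\Lambda<\infty$, independent of $j$ and of $r\le\rho^{-1}$, with $\Phi_j(u,v)\le\Lambda\,\Phi_j(u',v)$ for all $u,u'$ in the near part of $S_j$ and all $v\in S_{j+1}$. By the Birkhoff--Hopf theorem, such a $\Lambda$-positive kernel sends the positive cone into a set of $d_H$-diameter at most some $D=D(\Lambda)$ and contracts $d_H$ by a factor $\beta=\beta(\Lambda)<1$. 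Composing the $m-1\asymp n$ kernels $\Phi_1,\dots,\Phi_{m-1}$, which carry $h^x_1,h^{x'}_1$ to $h^x_m,h^{x'}_m$, then gives $d_H(h^x_m,h^{x'}_m)\le D\,\beta^{\,m-2}\le C'e^{-\eta n}$ with $\eta>0$ independent of $r$. Inserting this bound into the previous display and absorbing the $e^{-cn}$ truncation errors completes the verification of~\eqref{e:ancona1}.

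\emph{Main obstacle.} For $r<\rho^{-1}$ this is in essence Ancona's original argument, but the constants $\Lambda,D,\beta$ produced that way degenerate as $r\uparrow\rho^{-1}$. The real work, carried out by Gou\"ezel~\cite{G14}, is to show that they stay bounded uniformly up to $r=\rho^{-1}$; this rests on the $\rho$-transience $G_{\rho^{-1}}(x,x)<\infty$ and on the uniform validity of Theorem~\ref{T:ancona} on all of $[1,\rho^{-1}]$. A second technical difficulty is that the jumps of $\mu$ are unbounded, so that a single step can cross an entire slab; making the ``ordered crossing'' picture, the slab construction and the attendant tail estimates rigorous requires the additional care of~\cite{G15}, where the superexponential moment assumption on $\mu$ is precisely what keeps the correction terms exponentially small.
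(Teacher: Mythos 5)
The paper does not prove Theorem~\ref{T:ancona1}: it is quoted verbatim from Gou\"ezel~\cite[Theorem~2.9]{G14}, with the unbounded-jump extension in~\cite{G15}, so there is no ``paper's own proof'' to compare against. Your sketch is a faithful reconstruction of the strategy used in those references---separating slabs along an approximating geodesic, a first-passage decomposition of the Green function across the slabs, a uniform Harnack/Doeblin estimate supplied by the weak uniform Ancona inequality (Theorem~\ref{T:ancona}), and Birkhoff--Hopf contraction of the Hilbert projective metric---and you correctly identify that the real content is keeping the contraction constants uniform up to $r=\rho^{-1}$ (via $\rho$-transience) and controlling jumps across slabs via the superexponential moment assumption.
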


We will also use restricted Green functions defined as follows. For a path $\gamma = x x_1 \cdots x_{n-1} y$  of length $n$ from $x$ to $y$, let $w_r(\gamma) = r^n \prod_{i=0}^{n-1} \mu \left( x_i^{-1} x_{i+1} \right)$ with the convention $x_0 = x$ and $x_n = y$. For any subset $A$ of $\Gamma$, define the \emph{restricted Green function} $G_r(x, \, y; \, A) = \sum w_r(\gamma)$ for $0 \leq r \leq \rho^{-1}$, where the sum is over all paths $\gamma = x x_1 \cdots x_{n-1} y$ such that $x_i \in A$ for $1 \leq i \leq n-1$.

The following lemma shows the probability that the random walk does not follow a geodesic is superexponentially small, which was proved in~\cite[Lemma 2.6]{G14} for finitely supported $\mu$ and in~\cite[\S4.4]{G15}
for the general case.

\begin{lemm}[{\cite[Lemma 2.6]{G14}},~\cite{G15}]
  \label{L:gfoutside}
  % There exist $N > 0$ and $\varepsilon > 0$ such that, for any $n \geq N$ and any $x$, $y$, $z \in \Gamma$ on a geodesic segment (in this order) with $d(x, \, y) \geq n$ and $d(y, \, z) \geq n$,
%   \[
% G_{\rho^{-1}} \left( x, \, z; \, B(y, \, n)^c \right)
% \leq
% 2^{- \e^{\varepsilon n}}.
% \]
  For every $K > 0$, there is $N > 0$ such that, for all $n \geq N$ and all points $x$, $y$ and $z$ in $\Gamma$ on a geodesic segment (in this order) with $d(x,\, y) \geq n$ and $d(y,\, z) \geq n$,
  \begin{equation}
    \label{e:gfoutside}
    G_{\rho^{-1}} \left( x,\, z; \, B(y,\, n)^c \right) \leq K^{-n}.
  \end{equation}
\end{lemm}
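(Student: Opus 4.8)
The plan is to interpret $G_{\rho^{-1}}(x,z;B(y,n)^c)$ as the total $\rho^{-1}$-weight of all random-walk trajectories running from $x$ to $z$ without ever entering $B(y,n)$, and to show that, since $y$ lies on a geodesic strictly between $x$ and $z$ with both legs of length $\ge n$, such a trajectory is forced into a detour around $B(y,n)$ whose cumulative $\rho^{-1}$-weight is super-exponentially small. Two ingredients are needed: a geometric lower bound on the length of a detour, and a probabilistic estimate that long detours are super-exponentially unlikely.

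\emph{Geometric input.} Since $x$, $y$, $z$ lie in this order on a geodesic we have $\gp[y]{x}{z}=0$, so the hyperbolicity inequality~\eqref{e:gromov} forces every vertex $v$ of a path from $x$ to $z$ to satisfy $\min\{\gp[y]{x}{v},\,\gp[y]{z}{v}\}\le\delta$; call $v$ an \emph{$x$-vertex} when $\gp[y]{z}{v}\le\delta$ and a \emph{$z$-vertex} when $\gp[y]{x}{v}\le\delta$. A trajectory contained in $B(y,n)^c$ passes from an $x$-vertex to a $z$-vertex, and at its first such crossing one meets consecutive vertices $w,w'$ with, using $d(w,y),d(w',y)\ge n$,
\[
  d(w,z)\ \ge\ d(w,y)+d(y,z)-2\delta\ \ge\ 2n-2\delta,\qquad d(x,w')\ \ge\ 2n-2\delta,
\]
so that the trajectory already has length $\ge d(x,z)+2n-O(\delta)$ once its longest single step is discarded. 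To get the real gain I would invoke the Bonk--Schramm quasi-isometric embedding of $(\Gamma,d)$ into a convex subset of some $\H^m$: in $\H^m$ a path avoiding a ball of radius $n$ and joining two antipodal points of its boundary sphere has length at least of order the circumference $\asymp\e^{n}$ of that sphere, so after transporting back one concludes that \emph{any} path from $x$ to $z$ inside $B(y,n)^c$ either contains a step of length $\ge\epsilon n$ or has length at least $\e^{cn}$, for constants $c,\epsilon>0$ depending only on $\Gamma$ and $\mu$.

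\emph{Probabilistic input.} Trajectories containing a step of length $\ge\epsilon n$ are handled at once: decomposing at that step and retaining the restriction ``avoid $B(y,n)$'' on one flanking piece (to keep the sum finite), their total weight is at most $\rho^{-1}\mu\!\left(B(e,\epsilon n)^c\right)\cdot G_{\rho^{-1}}(e,e)$, which the superexponential tails of $\mu$ make $\le K^{-n}$ for $n$ large. For the rest, I would use that a detour around $B(y,n)$ must stay close to the sphere $\{d(\cdot,y)=n\}$ (circling at word-distance $n+k$ from $y$ only makes it longer), and while doing so it passes through order $\e^{cn}$ successive ``windows'' --- regions of bounded word-diameter hugging that sphere --- and every time the walk sits in a window it has a probability bounded below, uniformly in $n$, of stepping into $B(y,n)$ and being killed. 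Hence a trajectory that clears all $\asymp\e^{cn}$ windows carries $\rho^{-1}$-weight at most $(1-c')^{\e^{cn}}$, and adding the two contributions gives $G_{\rho^{-1}}(x,z;B(y,n)^c)\le (1-c')^{\e^{cn}}+K^{-n}\le 2K^{-n}$ for $n$ large, which after renaming $K$ is the claim.

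\emph{Main obstacle.} The crux is the last step. A detour that is merely long \emph{in the word metric} is by itself only exponentially costly --- the $n^{-3/2}$ correction in $p_L(e,e)$ makes $\sum_{L\ge M}\rho^{-L}p_L(x,z)$ decay only like $M^{-1/2}$ --- so one cannot bound the restricted Green function by the unrestricted heat-kernel tail; the super-exponential decay must be squeezed out of the $\asymp\e^{cn}$ nearly independent opportunities to fall into the forbidden ball. Two points require care: (i) ruling out that the walk avoids the window structure by wandering to word-distance $\gg n$ from $y$ and circling there --- one recurses on the larger ball $B(y,n+k)$, whose detour is even longer, or forbids the necessary long excursions via the superexponential step distribution; and (ii) arranging the windows so that the killing probabilities genuinely multiply, rather than over-counting shared excursions. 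Throughout, the uniform Ancona inequality of Theorem~\ref{T:ancona}, applicable at every scale because each sub-arc of the detour is again in a ``bad configuration'' with a constant depending only on $\delta$, is what keeps all estimates uniform in $n$; carrying this out in full is exactly the content of~\cite{G14,G15}.
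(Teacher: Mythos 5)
You correctly isolate the central difficulty and then do not close it. Exponential divergence in a hyperbolic group does force any detour around $B(y,n)$ to have length at least $\e^{cn}$ once macroscopic jumps are removed; but, as you yourself observe, the local limit theorem $\rho^{-L}p_L(x,z)\sim C_{x,z}L^{-3/2}$ then gives $\sum_{L\ge \e^{cn}}\rho^{-L}p_L(x,z)\lesssim \e^{-cn/2}$, which is only exponential in $n$, not the superexponential bound the lemma claims. So some killing is needed. The window argument you propose to supply it, however, rests on a false premise: a detour around $B(y,n)$ need not hug the sphere $\{d(\cdot,y)=n\}$. The walk can reach word distance $Cn$ from $y$, for any fixed $C$, by concatenating $O(Cn)$ ordinary-sized steps; the superexponential tails of $\mu$ are irrelevant here because no single step is long, and the recursion on $B(y,n+k)$ that you gesture at is not set up since the walk does not circle at one fixed radius. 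At distance $Cn$ from $y$ the one-step probability of landing in $B(y,n)$ is itself superexponentially small in $n$, so the ``one kill per window'' estimate loses all content exactly where it is needed. (Your estimate for the long-step contribution also has a bookkeeping problem: $\sum_w G_{\rho^{-1}}(x,w;B(y,n)^c)$ is not finite without further restriction, so $\rho^{-1}\mu(B(e,\epsilon n)^c)G_{\rho^{-1}}(e,e)$ does not bound that piece as written.)

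What is missing, and what you honestly flag as the crux, is a mechanism that converts exponential detour length into superexponential decay uniformly over how far the path strays from $y$. That mechanism is the technical heart of the cited results, and the paper deliberately does not reprove this lemma: it cites \cite[Lemma~2.6]{G14} for finitely supported $\mu$ and \cite[\S 4.4]{G15} for the general superexponential case. Your concluding deferral to those references is the correct assessment: what you have written is a plausible heuristic for why the statement should hold, not a proof of it.
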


\subsection{Branching random walks on hyperbolic groups}
\label{s:brw}

Let $\mu$ be an admissible symmetric probability on a nonelementary hyperbolic group $\Gamma$ and $\nu$ a probability on $\N$ with mean $\lambda \in [1, \, \infty)$. We recall the definition of the branching random walk ${\rm BRW}(\Gamma, \, \nu, \, \mu)$. At time $n = 0$, one particle is located at $x \in \Gamma$. For any $n \geq 1$, each particle alive at time $n$ dies and gives birth to an independent random number of offspring particles according to the probability $\nu$, each of which independently takes a step on $\Gamma$ according to the random walk $Y$ with step distribution $\mu$.

It is convenient to view the BRW as a tree-indexed random walk (c.f.~\cite{BP94}). Let $\sT$ be a rooted infinite tree. The root is denoted by $\emptyset$ and for a vertex $u$ of $\sT$ let $|u|$ be the graph distance from $u$ to $\emptyset$ and let $u_i$, $0 \leq i \leq |u|$, be the ancestor of $u$ in the $i$-th generation. When $i = |u| - 1$ we denote $u^{-} = u_i$ for simplicity. For $m \geq 0$, let $\sT_m$ be the set of vertices $u$ with $|u| = m$. For $u$, $w \in \sT$, $u \wedge w$ stands for the common ancestor of $u$ and $w$ with the largest generation. Consider a sequence of i.i.d. random variables $\{\eta_u,\, u \in \sT \setminus \{\emptyset\}\}$ with common distribution $\mu$. The random walk on $\Gamma$ indexed by $\sT$ is a collection of $\Gamma$-valued random variables $\{X_u,\, u \in \sT\}$ given by $X_u = x \cdot \prod_{j=1}^{|u|} \eta_{u_j}$. A tree-indexed random walk becomes a
${\rm BRW}(\Gamma, \, \nu, \, \mu)$ if the underlying tree is a Galton--Watson tree induced by the offspring distribution $\nu$.

A BRW is called \emph{recurrent} if almost surely each point in $\Gamma$ is visited infinitely many times and \emph{transient} if almost surely any finite subset is eventually free of particles. As stated in the introduction, we have the following classification in recurrence and transience; see~\cite{BP94} for the sub- and supercritical case and~\cite{GM06} for the critical case. We also refer to~\cite{BZ08} for corresponding results for continuous time BRW and~\cite{GK03} for branching diffusion on Riemannian manifolds. Recall that $\rho$ is the spectral radius of the underlying random walk.
\begin{thm}
  The ${\rm BRW}(\Gamma, \, \nu, \, \mu)$ is transient if and only if $\lambda \leq \rho^{-1}$.
\end{thm}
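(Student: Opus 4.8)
The plan is to prove the two implications separately: ``$\lambda\le\rho^{-1}\Rightarrow$ transient'' is a short first-moment argument, while the converse is the substantial one and reduces to the theory of supercritical multitype Galton--Watson processes followed by a $0$--$1$ argument. For the easy direction, realise the $\brw(\Gamma,\nu,\mu)$ as the tree-indexed walk $\{X_u:u\in\sT\}$ of \S\ref{s:brw}, with $\sT$ a Galton--Watson tree independent of the i.i.d.\ steps $\{\eta_u\}$. Since $\E\,|\sT_n|=\lambda^n$ and $\E\big[\#\{u\in\sT_n:X_u=x\}\,\big|\,\sT\big]=|\sT_n|\,p_n(e,x)$, we obtain for every $x\in\Gamma$
\[
  \E\Big[\,\sum_{n\ge0}\#\{u\in\sT_n:X_u=x\}\Big]=\sum_{n\ge0}\lambda^{\,n}p_n(e,x)=G_\lambda(e,x)\le G_{\rho^{-1}}(e,x)<\infty,
\]
the finiteness being the $\rho$-transience of $Y$ on the nonelementary hyperbolic group $\Gamma$ (\cite[Thm.~II.7.8]{W00}). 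Hence a.s.\ only finitely many particles ever visit $x$; a countable union over $x\in\Gamma$ shows that a.s.\ every finite subset of $\Gamma$ is eventually free of particles, i.e.\ the BRW is transient. Note this covers the critical value $\lambda=\rho^{-1}$ as well; the reference \cite{GM06} is needed only in the general Markov-chain framework, where $G_{\rho^{-1}}$ may be infinite.

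For the converse, suppose $\lambda>\rho^{-1}$. First I would prove the positive-probability statement $\P_e(e\text{ is visited infinitely often})>0$ by localisation. Choose a finite connected set $A\ni e$ on which the walk killed upon leaving $A$ is irreducible and has spectral radius $\rho_A$ with $\lambda\rho_A>1$; this is possible because $\rho_A\uparrow\rho$ as $A$ exhausts $\Gamma$ and $\lambda\rho>1$, the irreducibility (for $A$ a large ball) using that $\supp\mu$ generates $\Gamma$ and $\mu$ is symmetric. The BRW with every particle that leaves $A$ deleted is a multitype Galton--Watson process on the type set $A$ with mean offspring matrix $\big(\lambda\,\mu(x^{-1}y)\big)_{x,y\in A}=\lambda P_A$, whose Perron--Frobenius eigenvalue $\lambda\rho_A$ exceeds $1$; since $\lambda>1$ it is non-singular, hence supercritical, so started from one particle at $e$ it survives with some probability $\theta>0$, and by irreducibility, on the survival event every type --- in particular $e$ --- is occupied at arbitrarily large generations. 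As the full BRW dominates the killed one, $\P_e(e\text{ visited i.o.})\ge\theta>0$.

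To upgrade to probability one, set $q(x)=\P_x(e\text{ visited only finitely often})$; conditioning on the first generation and using independence of the sub-BRWs gives the functional equation $q=f(Pq)$ on $\Gamma$, where $f(s)=\sum_k\nu(k)s^k$. Because $\nu(0)=0$ and $f'(1)=\lambda>1$, $f$ is strictly convex with $f(s)<s$ on $(0,1)$ and fixed points only $0$ and $1$; feeding $q(x)=f(Pq(x))\le f(\sup q)$ into the monotonicity of $f$ forces $\sup_x q(x)\in\{0,1\}$, while the previous step gives $q(e)\le1-\theta<1$. A $0$--$1$ argument in the style of Benjamini--Peres \cite{BP94} then rules out $\sup q=1$: one iterates the identity $\P_e(e\text{ visited f.o.})=\E_e\big[\prod_{u\in\sT_n}q(X_u)\big]$ and uses that the branching continually produces independent sub-populations, each of which --- by the localisation step applied to a fixed neighbourhood of $e$ repeatedly met by the expanding cloud --- has a chance bounded away from $0$ of subsequently visiting $e$ infinitely often. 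Hence $q\equiv0$. Finally the events $\{x\text{ visited i.o.}\}$ all agree a.s.\ (if some $x$ is hit by infinitely many particles, the independent sub-BRWs issued from them each reach $e$ with probability $1-q(x)>0$, so $e$ is hit at arbitrarily large times, and symmetrically), so a.s.\ \emph{every} vertex is visited by infinitely many particles and the BRW is recurrent.

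The step I expect to be the main obstacle is precisely this passage from positive probability to probability one. The localisation reduces matters to a supercritical multitype Galton--Watson process cleanly, but only locally, and the claim that for $\lambda>\rho^{-1}$ the BRW started at an arbitrary vertex a.s.\ reaches $e$ (equivalently $q\equiv0$) is the delicate point of \cite{BP94}: a purely function-theoretic argument does not by itself exclude $q$ tending to $1$ along a sequence escaping to infinity, so one genuinely needs the probabilistic bootstrap above. The heat-kernel asymptotic \eqref{e:hkdecay} and the Ancona inequalities would allow one to make all of this quantitative, but they are not needed for the qualitative dichotomy itself.
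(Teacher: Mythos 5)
The paper does not actually prove this theorem: it is stated and the sub/supercritical dichotomy is attributed to Benjamini--Peres~\cite{BP94}, the critical case to Gantert--M\"uller~\cite{GM06}, so there is no internal proof to compare against. Judged on its own terms, your argument is partly right and partly gappy.

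Your easy direction is correct and complete. In particular, the observation that on a nonelementary hyperbolic group the walk is $\rho$-transient (\cite[Thm.~II.7.8]{W00}), so $G_{\rho^{-1}}(e,x)<\infty$ and the first-moment bound already covers $\lambda=\rho^{-1}$ without~\cite{GM06}, is accurate and is a genuine simplification special to this setting. The localisation of the converse to a ball $A$ with $\lambda\rho_A>1$ and the resulting supercritical, irreducible multitype Galton--Watson process (mean matrix $\lambda P_A$, Perron eigenvalue $\lambda\rho_A>1$) is also the right skeleton and matches the strategy of~\cite{BP94}; note that irreducibility of $P_A$ on a large ball really does need $\mu$ admissible, as you use, and that $\lambda>1$ is automatic from $\lambda>\rho^{-1}>1$.

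The gap you flag is a real one, and your sketch does not close it. On the event $\{e\text{ visited only finitely often}\}$ --- the very event whose probability you want to show is zero --- the cloud eventually vacates every fixed finite set, so the phrase ``a fixed neighbourhood of $e$ repeatedly met by the expanding cloud'' is precisely the recurrence statement in disguise; assuming it makes the bootstrap circular. The functional-equation step showing $\sup_x q(x)\in\{0,1\}$ is correct but by itself does not exclude the scenario $q(e)<1$ with $q(x_n)\to1$ along an escaping sequence. What actually closes this in~\cite{BP94} is more delicate: one combines the bounded martingale $Z_n=\prod_{u\in\sT_n}q(X_u)$ with L\'evy's $0$--$1$ law (so $Z_n\to\id_{\{e\text{ visited f.o.}\}}$ a.s.), and pairs it with an independence/second Borel--Cantelli argument for the killed sub-processes spawned by particles entering $A$; establishing that a.s.\ infinitely many such trials occur is itself the nontrivial content and cannot be read off from the positive-probability step. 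So your write-up correctly identifies where the difficulty lies, but as it stands the supercritical direction still relies on the unproved bootstrap of~\cite{BP94}, which is the same position the paper itself takes.
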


Let $\sP$ be the set of points in $\Gamma$ that are ever visited by the ${\rm BRW}(\Gamma, \, \nu, \, \mu)$, and $\Lambda$ the \emph{limit set} of the BRW (namely the set of accumulation points of $\sP$).
Clearly for $\lambda > \rho^{-1}$, almost surely $\sP = \Gamma$ and $\Lambda=\partial \Gamma$. In this paper we focus on the transient regime $\lambda \leq \rho^{-1}$ and study the volume growth rate of $\sP$ and the Hausdorff dimension for the limit set $\Lambda$.

\section{Growth for Green functions}
\label{s:rw}

Recall $|x|=d(e,x)$ is the word length of $x \in \Gamma$. For $n \geq 0$ let $\bS_n = \{x\in \Gamma:\ |x| = n\}$ be the sphere with radius $n$ centered at  $e$, the identity element of $\Gamma$. For $r \in [0, \, \rho^{-1}]$,  define
\begin{equation}
  \label{e:Hlambda}
  H_n(r) := \sum_{x \in \bS_n} G_r(e,\, x)\quad \text{and}\ H(r) := \limsup_{n \to \infty} \left\{ H_n(r) \right\}^{1/n}.
\end{equation}
We will see in the next section that $H(r)$ determines the volume growth rate for the BRW on hyperbolic groups.

\begin{thm}
\label{T:Hr}
Assume that $r \in [1, \, \rho^{-1}]$. Then there is a constant $C \geq 1$ such that for every $n \geq 0$ that
\begin{equation}
	\label{e:Hr}
	C^{-1} H(r)^n
\leq
H_n(r)
\leq
C H(r)^n,
\end{equation}
and thus $H(r)=\lim\limits_{n \to \infty} \left\{ H_n(r) \right\}^{1/n}.$ Furthermore, $H(r)$ is continuous and strictly increasing, and satisfies
\begin{equation}
	\label{e:Hrbounds}
	% r \leq
        H(r) \leq \e^{v/2}
\end{equation}
with $v$ given by~\eqref{e:vrate1}. 
\end{thm}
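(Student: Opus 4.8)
The plan is to establish the two-sided exponential bound~\eqref{e:Hr} first, then deduce continuity and monotonicity, and finally the upper bound~\eqref{e:Hrbounds}. The heart of the matter is a rough sub- and super-multiplicativity of $n\mapsto H_n(r)$, which will follow from the uniform Ancona inequality (Theorem~\ref{T:ancona}) together with the efficient shadow covering of $\partial\Gamma$ (at most $N$ elements of $\bS_n$ shadowing any boundary point). First I would prove \emph{rough submultiplicativity}: for $x\in\bS_{m+n}$, pick a geodesic from $e$ to $x$ and let $y$ be its point at distance $m$ from $e$; then $y$ lies close to the geodesic, so Ancona gives $G_r(e,x)\le C\,G_r(e,y)G_r(y,x)$. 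Summing over $x\in\bS_{m+n}$, grouping by the intermediate vertex $y\in\bS_m$ (each $x$ contributes to boundedly many $y$'s, or one uses a genuine choice of geodesic so that each $x$ has exactly one such $y$), and using left-invariance $G_r(y,x)=G_r(e,y^{-1}x)$ with $|y^{-1}x|\le n$, one gets $H_{m+n}(r)\le C\,H_m(r)\sum_{k\le n}H_k(r)\le C' n\,H_m(r)H_n(r)$ after absorbing the lower spheres. To remove the polynomial factor $n$, I would instead sum over $x$ in the \emph{ball} $B(e,N')$ of a fixed radius or, more cleanly, note that $\widetilde H_n(r):=\sum_{|x|\le n}G_r(e,x)$ is genuinely roughly submultiplicative up to a constant (since $G_r(e,\cdot)$ decays geometrically off the sphere by Lemma~\ref{L:gfoutside} applied to $G_{\rho^{-1}}$, we have $\widetilde H_n(r)\asymp H_n(r)$); then Fekete's lemma applied to $\log(C\widetilde H_n(r))$ yields $\lim \widetilde H_n(r)^{1/n}=H(r)$ and the upper bound in~\eqref{e:Hr}.

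For the matching lower bound I would prove \emph{rough supermultiplicativity}. Fix $x\in\bS_m$ and $z\in\bS_n$. The product $xz$ has $|xz|$ possibly much smaller than $m+n$, so a naive concatenation fails; the fix is to restrict to pairs $(x,z)$ that are ``in generic position'', i.e. $\gp[e]{x}{xz^{-1}}$-type Gromov products are small, equivalently the geodesic $[e,x]$ followed by $[x,xz]$ is a quasigeodesic. Concretely: for each $w\in\bS_{m+n}$ with $(e|w)$-structure making $\bS_m$ cross $[e,w]$, the vertex $y\in[e,w]\cap\bS_m$ satisfies, by the reverse Ancona-type inequality (which holds trivially: $G_r(e,w)\ge$ weight of any single path, but more usefully $G_r(e,w)\ge c\,G_r(e,y)G_r(y,w)$ fails in general) — so here I would instead use the cruder but sufficient bound $G_r(e,w)\ge G_r(e,y)\,p_k(y,w)r^k$ is too weak; the right tool is that $H_{m+n}(r)\ge \sum_{y\in\bS_m}\sum_{w}G_r(e,y)\cdot(\text{contribution of paths }y\to w\text{ staying in a cone})$, and the cone contribution summed over $w\in\bS_{m+n}$ is $\gtrsim H_n(r)$ by the shadow-efficiency estimate (only $N$-fold overcounting). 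This gives $H_{m+n}(r)\ge c\,H_m(r)H_n(r)$, hence by Fekete again the lower bound in~\eqref{e:Hr}.

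Granting~\eqref{e:Hr}, \emph{strict monotonicity} of $r\mapsto H(r)$ is immediate since each $G_r(e,x)$ is strictly increasing in $r$ on $(0,\rho^{-1}]$ (all nonzero terms $r^np_n$ are), so $H_n(r)<H_n(r')$ for $r<r'$, and passing to $n$-th roots with the uniform constant preserves strictness; nonelementarity guarantees enough surviving terms. For \emph{continuity}, left-continuity on $(0,\rho^{-1}]$ follows from monotone convergence in $r$ for each fixed $n$ combined with the uniform two-sided bound~\eqref{e:Hr} (which makes $H_n(r)^{1/n}\to H(r)$ uniformly on compact $r$-intervals by a standard equicontinuity/monotone-limit argument); right-continuity likewise, using that $G_r(e,x)$ is continuous up to $\rho^{-1}$ by $\rho$-transience. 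Finally, the bound $H(r)\le\e^{v/2}$: by Cauchy--Schwarz,
\[
H_n(r)=\sum_{x\in\bS_n}G_r(e,x)\le|\bS_n|^{1/2}\Big(\sum_{x\in\bS_n}G_r(e,x)^2\Big)^{1/2}\le |\bS_n|^{1/2}\Big(\sum_{x\in\Gamma}G_r(e,x)^2\Big)^{1/2}.
\]
Since $r\le\rho^{-1}$ and $\mu$ is symmetric, $\sum_{x}G_r(e,x)^2=\sum_x G_r(e,x)G_r(x,e)=\sum_{n\ge0}(n+1)r^{2n}p_{2n}(e,e)$ converges (the radius of convergence of $\sum r^{2n}p_{2n}(e,e)$ is $\rho^{-2}\ge r^2$, and $\rho$-transience handles the boundary case $r=\rho^{-1}$, giving finiteness and even an extra factor from the $n^{-3/2}$ decay~\eqref{e:hkdecay}). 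Thus $H_n(r)\le C|\bS_n|^{1/2}$, and~\eqref{e:vrate1}--\eqref{e:vgrowth} give $H(r)\le\e^{v/2}$.

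The main obstacle is the rough supermultiplicativity: concatenating geodesics does not stay on a sphere of the right radius, and one must control the ``backscattering''—pairs $(x,z)$ for which $|xz|\ll m+n$—using Lemma~\ref{L:gfoutside} and the hyperbolicity of $\Gamma$ to show the generic-position pairs already account for a constant fraction of $H_m(r)H_n(r)$; making the $N$-fold shadow overcounting interact cleanly with the weighted Green-function sums is the technically delicate point.
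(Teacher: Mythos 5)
Your outline gets the global structure right (sub/supermultiplicativity via Ancona, then Fekete, then Cauchy--Schwarz for the $\e^{v/2}$ bound), and the submultiplicativity and continuity arguments are essentially correct. However there are two genuine gaps.

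\textbf{Supermultiplicativity.} You correctly identify the obstacle — concatenating $x\in\bS_m$ and $z\in\bS_n$ can ``backscatter'' so that $|xz|\ll m+n$ — but you do not resolve it. Your proposed fix (sum over $w\in\bS_{m+n}$, restrict paths to a cone, invoke shadow overcounting) is not worked out and would not obviously close: the map $w\mapsto(y,y^{-1}w)$ with $y\in[e,w]\cap\bS_m$ is injective but lands in a \emph{proper} subset of $\bS_m\times\bS_n$, and there is no apparent reason that subset carries a constant fraction of $H_m(r)H_n(r)$; the shadow-efficiency lemma controls overcounting of boundary points, not deficiency of interior pairs. Incidentally, the ``reverse Ancona'' inequality $G_r(e,w)\ge c\,G_r(e,y)G_r(y,w)$ that you say fails in general is actually true for every $y$ (via first-passage decomposition, $G_r(e,w)\ge F_r(e,y)G_r(y,w)=G_r(e,y)G_r(y,w)/G_r(e,e)$), so that is not the difficulty. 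The paper's proof uses a specific combinatorial device from Gou\"ezel's work: there is an integer $c_1$ such that for all $x,y\in\Gamma$ there exists $a$ with $|a|\le c_1$ and $|xay|\ge|x|+|y|$, and the map $\Psi(x,y)=xay$ has at most $c_2$ preimages. Then $e,x,xa,xay$ lie roughly on a geodesic, Ancona gives $G_r(e,x)G_r(e,y)\le c_4\,G_r(e,\Psi(x,y))$, and summing over $(x,y)\in\bS_m\times\bS_n$ hits only the shells $\bS_{m+n},\ldots,\bS_{m+n+c_1}$ with bounded multiplicity; submultiplicativity (already proved) collapses the finitely many shells to $\bS_{m+n}$. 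This lemma is the missing idea.

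\textbf{Strict monotonicity.} Your claim that ``$H_n(r)<H_n(r')$ for each $n$ ... and passing to $n$-th roots with the uniform constant preserves strictness'' is wrong. Termwise strict monotonicity of $H_n(r)$ does not survive the $n$-th root limit: e.g.\ $H_n(r)=A^n(1+r/n)$ is strictly increasing in $r$ for each $n$ but $\lim H_n(r)^{1/n}=A$ is constant, and the two-sided bound $C^{-1}H(r)^n\le H_n(r)\le CH(r)^n$ is fully consistent with that. A quantitative gap is needed. The paper gets it from the elementary large-deviation estimate $\P(|Y_m|\ge\varepsilon^{-1}m)\le\rho^m$ (valid since $\mu$ is superexponential): splitting $H_n(r_1)=\sum_x\sum_m r_1^m p_m(e,x)$ at $m=\varepsilon n$ gives $H_n(r_1)\le O(1)+(r_1/r_2)^{\varepsilon n}H_n(r_2)$, hence $r_1^{-\varepsilon}H(r_1)\le r_2^{-\varepsilon}H(r_2)$, which is strictly increasing.

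Minor remarks: in the submultiplicativity step $|y^{-1}x|=n$ \emph{exactly} (not $\le n$), since $y$ lies on a geodesic from $e$ to $x$, so the detour through $\widetilde H_n$ is unnecessary. Also $\sum_x G_r(e,x)^2=\sum_n(n+1)r^n p_n(e,e)=\frac{\partial}{\partial r}[rG_r(e,e)]$, not $\sum_n(n+1)r^{2n}p_{2n}(e,e)$, and this quantity diverges at $r=\rho^{-1}$ (by $p_n(e,e)\sim C\rho^n n^{-3/2}$), so your claim of boundary finiteness is false; one must, as the paper does, prove $H(r)\le\e^{v/2}$ for $r<\rho^{-1}$ and then use left-continuity.
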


By~\eqref{e:Hr}, the growth for $H_n(r)$ is purely exponential. This makes the corresponding dynamical system relatively simple and is crucial when we prove that $H(\lambda)$ has critical exponent $1/2$ at $\rho^{-1}$ in \S\ref{s:exponent}.

We present several lemmas before proving Theorem~\ref{T:Hr}.

\begin{lemm}
  \label{L:subadditivity}
  There exists a constant $C > 0$ such that for every $r \in [1, \, \rho^{-1} ]$, and $m$, $n \geq 0$,
\[
  H_{m + n}(r) \leq C H_m(r) H_n(r).
\]
\end{lemm}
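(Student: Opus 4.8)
The plan is to decompose a path from $e$ to a point $x \in \bS_{m+n}$ according to where it first reaches the sphere $\bS_m$, and then apply the uniform Ancona inequality (Theorem~\ref{T:ancona}) to split the Green function $G_r(e,\, x)$ into a product of two Green functions, one living on a ball of radius roughly $m$ and one on the complementary region. More precisely, fix $x\in\bS_{m+n}$ and choose a geodesic $[e,\, x]$; let $w$ be the (or a) point on this geodesic with $|w| = m$. Since $w$ lies on a geodesic from $e$ to $x$, Theorem~\ref{T:ancona} gives $G_r(e,\, x) \leq C\, G_r(e,\, w)\, G_r(w,\, x)$ with $C$ an absolute constant (the distance from $w$ to $[e,x]$ being zero). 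Summing over $x \in \bS_{m+n}$,
\[
  H_{m+n}(r) = \sum_{x\in\bS_{m+n}} G_r(e,\, x)
  \leq C \sum_{x\in\bS_{m+n}} G_r(e,\, w_x)\, G_r(w_x,\, x),
\]
where $w_x$ is the chosen point of $[e,x]$ at distance $m$ from $e$.

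Next I would reorganize this sum by grouping the $x$'s according to the value $w = w_x \in \bS_m$. For each fixed $w\in\bS_m$ and each $x$ with $w_x = w$, the point $x$ satisfies $|w^{-1}x| = d(w,x) = (m+n) - m = n$ because $w$ is on a geodesic from $e$ to $x$; hence $w^{-1}x\in\bS_n$. Using left-invariance of the Green function, $G_r(w,\, x) = G_r(e,\, w^{-1}x)$. Therefore, bounding the number of $x$ mapping to a given $w$ crudely by extending the sum over all of $\bS_n$,
\[
  \sum_{x\in\bS_{m+n}} G_r(e,\, w_x)\, G_r(w_x,\, x)
  \leq \sum_{w\in\bS_m} G_r(e,\, w) \sum_{z\in\bS_n} G_r(e,\, z)
  = H_m(r)\, H_n(r).
\]
Combining the two displays gives $H_{m+n}(r) \leq C\, H_m(r)\, H_n(r)$ with the constant $C$ coming from Theorem~\ref{T:ancona}, uniform in $r\in[1,\,\rho^{-1}]$, which is exactly the claim.

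The one point that needs a little care — and which I expect to be the main (minor) obstacle — is the phrase ``$y$ close to a geodesic segment'' in the statement of Theorem~\ref{T:ancona}: the constant there depends on the distance from $y$ to the geodesic, so one must make sure $w_x$ is genuinely \emph{on} a geodesic from $e$ to $x$, or at least within a fixed bounded distance of one, uniformly in $x$. Taking $w_x$ to be an actual vertex of a chosen geodesic $[e,x]$ (which exists since $m \le m+n$) makes this distance zero, so the constant $C$ is truly uniform. A secondary subtlety is that distinct $x\in\bS_{m+n}$ may share the same $w_x$; this only helps, since it means the double sum over $(w,z)$ with $w\in\bS_m$, $z\in\bS_n$ overcounts, and overcounting is harmless for an upper bound. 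Finally, one should note the decomposition uses $G_r$ with $r\in[1,\rho^{-1}]$, for which all Green functions appearing are finite by $\rho$-transience (\cite[Theorem~II.7.8]{W00}), so every quantity above is well-defined.
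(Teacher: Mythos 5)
Your proof is correct and takes essentially the same route as the paper: pick a point $w$ on a chosen geodesic $[e,x]$ at distance $m$ from $e$, apply the uniform Ancona inequality to split $G_r(e,x)\le C\,G_r(e,w)\,G_r(w,x)=C\,G_r(e,w)\,G_r(e,w^{-1}x)$, and then overcount by summing $w$ over $\bS_m$ and $w^{-1}x$ over $\bS_n$. The paper's proof is the same two lines, without the (correct) clarifying remarks you add about the distance parameter in Theorem~\ref{T:ancona} and the harmlessness of overcounting.
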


\begin{proof}
  For every $x \in \bS_{m + n}$, choose a geodesics $[e, \, x]$ from $e$ to $x$ and let $y$ be the unique point in $[e, \, x]\cap \bS_m$. Then by the uniform Ancona inequalities (Theorem~\ref{T:ancona}), there exists a constant $C > 0$ depending only on $\Gamma$ such that
\[
  G_{r}(e,\, x)
  \leq
  C G_{r}(e,\, y) G_{r}(y,\, x)
  =
  C G_{r}(e,\, y) G_{r}(e,\, y^{-1} x).
\]
Therefore,
\[
H_{m+n}(r)
=
\sum_{x \in \bS_{m+n}} G_{r}(e,\, x)
\leq
C \sum_{y \in \bS_m} G_{r}(e,\, y) \sum_{z \in \bS_n} G_{r}(e,\, z)
=
C H_m(r) H_n(r).
\]
\end{proof}

\begin{lemm}
  \label{L:supadd}
  There exists a constant $C > 0$ such that for every $r \in [1, \, \rho^{-1} ]$, and $m$, $n \geq 0$,
\[
H_{m+n}(r)
\geq
C H_m(r) H_n(r).
\]
\end{lemm}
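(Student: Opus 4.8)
\noindent\textbf{Proof proposal for Lemma~\ref{L:supadd}.}

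The plan is to reverse the proof of Lemma~\ref{L:subadditivity}: instead of cutting an element of $\bS_{m+n}$ along a geodesic, we manufacture elements of word length about $m+n$ from pairs $(y,z)\in\bS_m\times\bS_n$ by inserting a ``buffer'' $f$ drawn from a fixed finite set $F$. The buffer is chosen so that the concatenation of the geodesic segments $[e,y]$, $[y,yf]$, $[yf,yfz]$ is a quasigeodesic, so that $\Phi(y,z):=yfz$ has word length in $[m+n,\,m+n+R]$ (with $R$ depending only on $F$), and so that $\Phi$ is boundedly many-to-one; then summing a lower bound for $G_r(e,\Phi(y,z))$ over all pairs will give $H_m(r)H_n(r)\lesssim\sum_{j\in[m+n,\,m+n+R]}H_j(r)$, which by Lemma~\ref{L:subadditivity} is $\lesssim H_{m+n}(r)$.

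The Green-function estimate that makes this work is elementary and uniform in $r\in[1,\rho^{-1}]$. Decomposing a walk from $e$ to $x$ through an intermediate point $u$ at its first visit to $u$, and using the translation invariance $G_r(u,u)=G_r(e,e)\le G_{\rho^{-1}}(e,e)=:C_G<\infty$ (finite by $\rho$-transience), one gets the unconditional bound $G_r(e,x)\ge C_G^{-1}G_r(e,u)G_r(u,x)$ for every $u\in\Gamma$, hence, applying it twice,
\[
  G_r(e,yfz)\ \ge\ C_G^{-2}\,G_r(e,y)\,G_r(e,f)\,G_r(e,z).
\]
Since $\mu$ is admissible, $G_r(e,f)\ge G_1(e,f)\ge p_{k}(e,f)>0$ for a suitable $k=k(f)$, so $c_0:=\min_{f\in F}\inf_{r\in[1,\rho^{-1}]}G_r(e,f)>0$ as soon as $F$ is finite. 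No hyperbolicity enters here.

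The construction of $F$ and the ``forward concatenation'' property is the technical heart, and I expect it to be the main obstacle. I would take $F=\bS_R$ for $R$ to be fixed below. Using the purely exponential growth of $\Gamma$ and standard counting of geodesics with a prescribed long prefix (\cite{C93}), for every $w\in\Gamma$ the number of $f\in\bS_R$ with $\gp[e]{w}{f}\ge D$ is at most $C\e^{v(R-D)}$, whereas $|\bS_R|\ge C^{-1}\e^{vR}$. Hence, choosing $D=D(\delta,v)$ large enough independently of $R$, for \emph{every} pair $(y,z)$ there is $f=f(y,z)\in\bS_R$ with both $\gp[e]{y^{-1}}{f}\le D$ and $\gp[e]{f^{-1}}{z}\le D$; by $\delta$-hyperbolicity these two smallness conditions force the concatenation above to be a $(1,K)$-quasigeodesic from $e$ to $yfz$ with $K=K(\delta,D)$ independent of $R$, so that, taking $R\ge K$, $|yfz|\in[m+n,\,m+n+R]$. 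Stability of quasigeodesics in hyperbolic spaces then places $y$ within a bounded distance $D'=D'(\delta)$ of the point at distance $m$ from $e$ on any geodesic $[e,yfz]$; since that point depends only on $x=yfz$, and since $z=f^{-1}y^{-1}x$ is determined by $y$ together with $f\in\bS_R$, the fibre $\Phi^{-1}(x)$ has at most $N_0:=|B(e,D')|\cdot|\bS_R|$ elements.

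Granting this, summing $G_r(e,\Phi(y,z))\ge C_G^{-2}c_0\,G_r(e,y)G_r(e,z)$ over $(y,z)\in\bS_m\times\bS_n$ and using the bounded multiplicity and the length bound gives
\[
  C_G^{-2}c_0\,H_m(r)\,H_n(r)\ \le\ \sum_{(y,z)}G_r\big(e,\Phi(y,z)\big)\ \le\ N_0\sum_{j=m+n}^{m+n+R}H_j(r).
\]
By Lemma~\ref{L:subadditivity}, $H_j(r)\le C\,H_{m+n}(r)\,H_{j-m-n}(r)\le C\big(\max_{0\le i\le R}H_i(\rho^{-1})\big)H_{m+n}(r)$ for $j$ in this range, the maximum being finite by $\rho$-transience. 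Collecting constants yields $H_{m+n}(r)\ge c\,H_m(r)\,H_n(r)$ with $c>0$ independent of $m$, $n$ and $r\in[1,\rho^{-1}]$, as claimed. (An alternative, available once the Cannon automaton coding geodesics has been set up, would realise $H_n(r)$ as a sum of weights over length-$n$ automaton paths and splice a length-$m$ path to a length-$n$ path through a bounded linking segment, using strong connectivity of a recurrent component.)
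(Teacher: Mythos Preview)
Your proof is correct and follows essentially the same strategy as the paper's. The paper's version is more concise because it cites \cite[Lemmas~2.4 and~2.5]{G14} as black boxes for the existence of a buffer $a$ of bounded length with $|xay|\ge|x|+|y|$ and for the bounded multiplicity of $(x,y)\mapsto xay$, whereas you spell out those two ingredients via the counting argument on $\bS_R$ and quasigeodesic stability; the elementary Green-function lower bound $G_r(e,u)G_r(u,x)\le G_r(e,e)\,G_r(e,x)$ and the final reduction via Lemma~\ref{L:subadditivity} are identical in both arguments.
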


\begin{proof}
  By~\cite[Lemma~2.4]{G14}, there exists integer $c_1 > 0$ with the property that for every $x,\ y \in \Gamma$, there is $a \in \Gamma$ of length at most $c_1$ such that $|x a y | \geq |x| + |y|$. Set $\Psi(x,y) = x a y$. By the proof of~\cite[Lemma~2.5]{G14}, there is $c_2 > 0$ such that each point has at most $c_2$ preimages under $\Psi$, that is, $\left| \{ (x, \, y): \ \Psi(x, \, y) = z \} \right| \leq c_2$ for any $z \in \Gamma$.

Now for $x \in \bS_m$ and $y \in \bS_n$, we have that $m + n \leq \Psi(x, \, y) \leq m + n + c_1$ and for some positive constants $c_3$, $c_4$,
  \begin{align*}
    G_r(e, \, x) G_r(e, \, y)
    \leq &
    c_3 G_r(e, \, x) G_r(e, \, a) G_r(e, \, y)
    =
    c_3 G_r(e, \, x) G_r(x, \, x a) G_r(x a, \, x a y) \\
    \leq &
    c_4 G_r(e, \, \Psi(x, \, y)).
  \end{align*}
  As a consequence,
  \begin{align*}
    H_m(r) H_n(r)
    \leq
    c_4 \sum_{x\in \bS_m,\, y \in \bS_n} G_r(e, \, \Psi(x, \, y))
    \leq
    c_4 \sum_{k = m + n}^{m+n + c_1} \sum_{z \in \bS_k} G_r(e, \, z)
    =
    c_4 \sum_{k = 0}^{c_1} H_{m + n + k}(r).
  \end{align*}
  By Lemma~\ref{L:subadditivity}, there are positive constants $c_5$, $c_6$ such that
  \[
    H_m(r) H_n(r)
    \leq
    c_5 \sum_{k=0}^{c_1} H_{m+n}(r) H_k(r)
    \leq
    c_6 H_{m+n}(r).
  \]
\end{proof}

\begin{proof}[Proof of Theorem~\ref{T:Hr}]
By Lemmas~\ref{L:subadditivity} and~\ref{L:supadd}, we have $c_1^{-1} H_m(r) H_n(r) \leq H_{m+n}(r) \leq c_1 H_m(r) H_n(r)$ for some constant $c_1 \geq 1$. Applying Fekete's subadditive lemma,
\[
H(r)
=
\lim_{n \to \infty} \left\{ H_n(r) \right\}^{1/n}
=
\inf_n \left\{ c_1 H_n(r) \right\}^{1/n}
=
\sup_n \left\{ \frac{H_n(r)}{c_1} \right\}^{1/n},
\]
and~\eqref{e:Hr} follows.

For $1 \leq r_1 < r_2 \leq \rho^{-1}$, set $\phi(n) = H_n(r_2) / H_n(r_1)$. Then $\phi(m+n) \leq c_1^2 \phi(m) \phi(n)$. Applying Fekete's subadditive lemma again, we have for any $n_0 > 0$ that
  \[
    \log H(r_2) - \log H(r_1)
    =
    \inf_{n\geq 1} \frac{1}{n} \left[ \log (c_1^2 \phi(n) ) \right]
    \leq
    \frac{\log c_1^2}{n_0} +  \frac{1}{n_{0}} \left[ \log H_{n_0}(r_2) - \log H_{n_0}(r_1) \right].
  \]
Since $H_{n_0}(r)$ is continuous, we prove that $H(r)$ is also continuous in $r \in [1, \, \rho^{-1}]$.

To prove that $H(r)$ is strictly increasing, we first claim that there is $\varepsilon > 0$ such that for all $m \geq 1$, 
\begin{equation}
  \label{e:ldpRW}
  \P \left( \left| Y_m \right| \geq \varepsilon^{-1} m \right)
  \leq
  \rho^m, 
\end{equation}
where $Y_m = \xi_1 \xi_2 \cdots \xi_m$ and $\xi_1$, $\xi_2$, $\ldots$ are i.i.d. random variables with common distribution $\mu$. In fact, if we set $Z_m = \left| \xi_1 \right| + \left| \xi_2 \right| + \cdots + \left| \xi_m \right|$, then $\left| Y_m \right| \leq Z_m$. Since $\mu$ is superexponential,~\eqref{e:ldpRW} follows from the well-known large deviation principle for the one-dimensional random walk $(Z_m)$; see for example~\cite[\S2.2]{DZ10}.

Now for any $1 \leq r_1 < r_2 \leq \rho^{-1}$ and $x \in \bS_n$ we have that 
\begin{align*}
  H_n(r_1)
  =&
     % \sum_{x \in \bS_n} G_{r_1}(e, \, x)
     % =
     \sum_{x \in \bS_n} \sum_{m=0}^\infty r_1^m p_m (e, \, x) \\
  =&
     \sum_{x \in \bS_n} \sum_{m=0}^{\varepsilon n} r_1^m p_m(e, \, x) + \sum_{x \in \bS_n} \sum_{m = \varepsilon n + 1}^\infty r_1^m p_m(e, \, x) \\
  \leq &
         \sum_{m=0}^{\varepsilon n} r_1^m \P \left( |Y_m| \geq \varepsilon^{-1} m \right)
         + \left( \frac{r_1}{r_2} \right)^{\varepsilon n} \sum_{x \in \bS_n} \sum_{m = \varepsilon n + 1}^\infty r_2^m p_m (e, \, x) \\
  \leq &
         \sum_{m=0}^{\varepsilon n} r_1^m \rho^m + \left( \frac{r_1}{r_2} \right)^{\varepsilon n}  \sum_{x \in \bS_n} G_{r_2}(e, \, x) \\
  \leq & \left( 1 - r_1 \rho \right)^{-1} + \left( \frac{r_1}{r_2} \right)^{\varepsilon n} H_n(r_2). 
\end{align*}
Thus we have that $r_1^{-\varepsilon} H(r_1) \leq r_2^{-\varepsilon} H(r_2)$. In particular, $H(r)$ is strictly increasing in $[1, \, \rho^{-1}]$. 
  % \[
%     G_{r_1}(e, \, x) = \sum_{m=n}^{\infty} r_2^m p_m(e, \, x) \left( \frac{r_{1}}{r_{2}} \right)^m
%     \leq \left( \frac{r_{1}}{r_{2}} \right)^n G_{r_2}(e, \, x).
%   \]
% Summing over $x \in \bS_n$, we get $r_1^{-n} H_n(r_1) \leq r_2^{-n} H_n(r_2)$ and hence $r_1^{-1} H(r_1) \leq r_2^{-1} H(r_2)$. In particular, $H(r)$ is strictly increasing. By taking $r_1 = 1$ and $r_2 = r$, we also prove the inequality $H(r) \geq r$.

It remains to prove that $H(r) \leq \e^{v/2}$. By the Cauchy--Schwarz inequality,
\[
\left[ \sum_{x \in \bS_n} G_r(e, \, x) \right]^2
 \leq
\left| \bS_n \right| \sum_{x \in \bS_n} G_r(e, \, x)^2.
\]
By~\cite[Proposition 1.9]{GL13},
\[
c(r) := \sum_{z \in \Gamma} G_r(e, \, z)^2 = \frac{\partial}{\partial r} \left[ r G_r(e, \, e) \right] < \infty
\]
for $r < \rho^{-1}$. It follows that
\[
H(r)
 \leq
\limsup_{n \to \infty} \left[  c(r) | \bS_n| \right]^{\frac{1}{2n}}
  =
 \e^{v/2}, \quad 1 \leq r < \rho^{-1}.
\]
Using the left continuity of $H(r)$ at $r = \rho^{-1}$, we complete the proof.
\end{proof}

\begin{remark}
  \begin{enumerate}[(i)]
  \item In the proof of~\eqref{e:Hrbounds}, the hyperbolicity of $\Gamma$ is only used in the last line. Thus, on any non-amenable group,~\eqref{e:Hrbounds} holds true for every $1 \leq r < \rho^{-1}$.
  \item If the random walk is nearest-neighbor, that is, $\mu$ is supported on the generating set, then we have
    \[
      G_{r_1}(e, \, x) = \sum_{m = n}^\infty r_1^m p_m(e, \, x) \leq \left( \frac{r_1}{r_2} \right)^n G_{r_2}(e, \, x), 
    \]
    and hence $r_1^{-1} H(r_1) \leq r_2^{-1} H(r_2)$. In particular, $H(r) \geq r$ for $r \in [1, \, \rho^{-1}]$. 
  \end{enumerate}
\end{remark}

% \section{BRW on hyperbolic groups}
% \label{sec:brw-hyperb-groups}

\section{Volume growth rate for BRW}
\label{s:vgr}

In this section, we will prove that the volume growth rate for ${\rm BRW}(\Gamma, \, \nu, \, \mu)$ on a nonelementary hyperbolic group $\Gamma$ coincides with $H(\lambda)$, the growth rate for Green function of base random walk $Y$ investigated in the previous section.

Assume ${\rm BRW}(\Gamma, \, \nu, \, \mu)$ starts at $e$. For $x \in \Gamma$ denote by $Z_x$ the number of particles in the BRW that ever visit $x$. Let $\sP := \{x \in \Gamma:\ Z_x \geq 1\}$, that is, the set of points in $\Gamma$ that are ever visited by the BRW. Denote by $M_n$ the cardinality of $\sP_n := \sP \cap \bS_n$.

\begin{thm}\label{T:growth}
For $\lambda \in [1, \, \rho^{-1}]$, $H(\lambda) = \limsup_{n \to \infty} M_n^{1/n}$ almost surely.
\end{thm}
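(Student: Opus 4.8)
The plan is to pin down $\limsup_n M_n^{1/n}$ by matching the first two moments of $M_n$ against the sequence $H_n(\lambda)$ studied in Theorem~\ref{T:Hr}, and then to upgrade the resulting in-probability estimate to an almost sure one using the genealogical structure of the BRW. The basic bookkeeping tool is the many-to-one identity $\E[Z_x]=\sum_{m\ge0}\lambda^m p_m(e,\, x)=G_\lambda(e,\, x)$, obtained by taking expectations over the Galton--Watson tree. It gives at once $\E[M_n]\le\sum_{x\in\bS_n}\E[Z_x]=H_n(\lambda)\le C\,H(\lambda)^n$ by Theorem~\ref{T:Hr}, so for each $\varepsilon>0$ one has $\sum_n\P\bigl(M_n\ge(H(\lambda)+\varepsilon)^n\bigr)<\infty$ by Markov's inequality, and Borel--Cantelli yields $\limsup_n M_n^{1/n}\le H(\lambda)$ a.s. It remains to prove the matching lower bound.

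For the lower bound it suffices to treat $\lambda\in(1,\,\rho^{-1})$ with $\nu$ of finite variance. Indeed $\lambda=1$ is immediate, since then $H(1)=1$ while $M_n\ge1$ for all $n$ (the trace $\sP$ is connected and unbounded, hence meets every $\bS_n$); an infinite-variance offspring law dominates the one truncated at level $K$, which has finite variance and mean $\lambda_K\uparrow\lambda$ with $H(\lambda_K)\to H(\lambda)$ by continuity (Theorem~\ref{T:Hr}); and at the endpoint $\lambda=\rho^{-1}$ one thins the offspring by an independent Bernoulli factor to mean $\lambda'\in(1,\,\rho^{-1})$, notes that since the Galton--Watson tree is a.s.\ infinite, for all large $m$ some generation-$m$ particle carries a surviving thinned subtree, applies the subcritical case $\lambda'<\rho^{-1}$ to such a subtree, and lets $\lambda'\uparrow\rho^{-1}$. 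Granting this reduction, a first-moment \emph{lower} bound follows from the estimate $\E[Z_x^2]\le C\,G_\lambda(e,\, x)$, uniform in $x$: decomposing $\E[Z_x^2]$ according to the most recent common ancestor of two visiting particles, the collinear pairs contribute $O(G_\lambda(e,\, x))$ because $G_\lambda(e,\, e)<\infty$, and the branching pairs contribute at most $\E[\nu(\nu-1)]\sum_z G_\lambda(e,\, z)G_\lambda(z,\, x)^2$, which is $O(G_\lambda(e,\, x))$ by the rough multiplicativity of $G_\lambda$ along $[e,\, x]$ (Theorem~\ref{T:ancona}) together with $\sum_z G_\lambda(e,\, z)^3\le\|G_\lambda(e,\,\cdot)\|_\infty\sum_z G_\lambda(e,\, z)^2<\infty$ (the last sum being finite for $\lambda<\rho^{-1}$, as recalled in the proof of Theorem~\ref{T:Hr}). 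Then $\P(Z_x\ge1)\ge\E[Z_x]^2/\E[Z_x^2]\ge c\,G_\lambda(e,\, x)$ by Cauchy--Schwarz, so $\E[M_n]\ge c\,H_n(\lambda)\ge c'\,H(\lambda)^n$; the same holds for a BRW started at any $v$ with $H_n(\lambda)$ replaced by $\sum_{x\in\bS_n}G_\lambda(v,\, x)$, which is $\gtrsim H_{n-|v|}(\lambda)$ uniformly in $v$ via the no-cancellation map of Lemma~\ref{L:supadd}.

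The main obstacle is the second moment, i.e.\ a good bound on the two-point function $\P(Z_x\ge1,\, Z_y\ge1)$. The naive bound by the expected number of genealogical pairs of particles reaching $x$ and $y$ is far too weak, because a single lineage together with its descendants makes exponentially many visits near both $x$ and $y$; one must instead locate the genealogical \emph{split vertex} $w=u\wedge v$ from which two disjoint lineages reach $x$ and $y$, observe that $X_w$ must lie near the confluent $c$ of $[e,\, x]$ and $[e,\, y]$ (at distance $\approx\gp{x}{y}$ from $e$), and apply the branching/Markov property at $w$ together with the first-moment bounds $\P_z(\text{reach }x)\le G_\lambda(z,\, x)$ for the two sub-BRWs emanating from the split. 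With the off-geodesic decay of Lemma~\ref{L:gfoutside} and the multiplicativity of Theorem~\ref{T:ancona} this should yield $\P(Z_x\ge1,\, Z_y\ge1)\le C\,r(n)\,G_\lambda(e,\, c)\,G_\lambda(c,\, x)\,G_\lambda(c,\, y)$ for some subexponential $r(n)$; summing over $x,\, y\in\bS_n$ (first over $c\in\bS_k$, then over $k$) and using $\sum_z G_\lambda(e,\, z)^2<\infty$ and $H(\lambda)>1$ gives $\E[M_n^2]\le C\,r(n)\sum_{k=0}^n H_k(\lambda)H_{n-k}(\lambda)^2\le C'\,r(n)\,H(\lambda)^{2n}\le C''\,r(n)\,(\E M_n)^2$. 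Making the union bound over split vertices rigorous, and controlling the resulting sum over their positions and generations, is the delicate point; the correlation estimate obtained this way is unlikely to be sharp.

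Finally one passes from moments to the almost sure statement. Paley--Zygmund gives $\P\bigl(M_n\ge\tfrac12\E[M_n]\bigr)\ge(\E M_n)^2/(4\E[M_n^2])\ge c/r(n)$, and likewise, uniformly in $v$ with $|v|\le n$, $\P\bigl(M_n^{(v)}\ge c_1H(\lambda)^{\,n-|v|}\bigr)\ge c/r(n)$ for a BRW started at $v$. Fix $\varepsilon\in(0,\, H(\lambda)-1)$, choose $\theta>1$ with $\theta\log\tfrac{H(\lambda)}{H(\lambda)-\varepsilon}>\log H(\lambda)$, and set $n_k=\lceil\theta^k\rceil$. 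Conditioning on $\mathcal F_{n_{k-1}}$, the $\sigma$-algebra of the BRW up to generation $n_{k-1}$: the generation-$n_{k-1}$ population $N_{n_{k-1}}$ grows faster than any polynomial in $n_{k-1}$ by supercriticality, each of its particles spawns an independent BRW which conditionally places at least $c_1H(\lambda)^{\,n_k-n_{k-1}}\ge(H(\lambda)-\varepsilon)^{n_k}$ points on $\bS_{n_k}$ with probability $\ge c/r(n_k)$, and these sub-BRWs are conditionally independent; hence $\P\bigl(M_{n_k}\ge(H(\lambda)-\varepsilon)^{n_k}\mid\mathcal F_{n_{k-1}}\bigr)\ge1-\bigl(1-c/r(n_k)\bigr)^{N_{n_{k-1}}}\to1$ a.s., so $\sum_k\P(\,\cdot\mid\mathcal F_{n_{k-1}})=\infty$ a.s.\ and the conditional Borel--Cantelli lemma forces $M_{n_k}\ge(H(\lambda)-\varepsilon)^{n_k}$ for infinitely many $k$. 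Thus $\limsup_n M_n^{1/n}\ge H(\lambda)-\varepsilon$ a.s.; letting $\varepsilon\downarrow0$ along a sequence and combining with the upper bound completes the proof.
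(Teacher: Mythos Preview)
Your upper bound and first-moment lower bound are essentially the paper's (Lemma~\ref{L:Mnupper} and Corollary~\ref{C:EMnlower}). The two places where you diverge are the second-moment estimate and the passage to an almost-sure statement, and in both places the paper's argument is simpler than what you propose.

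On the second moment: your claim that ``the naive bound by the expected number of genealogical pairs is far too weak'' is a misconception. The paper uses precisely that bound, namely $\P(Z_x\ge1,\,Z_y\ge1)\le\E[Z_xZ_y]\le\sigma^2\sum_z G_\lambda(e,\,z)G_\lambda(z,\,x)G_\lambda(z,\,y)$ (Lemma~\ref{L:Zxy}), and it is sharp up to constants because $Z_x$ is typically $O(1)$ on $\{Z_x\ge1\}$. What makes it work is the purely geometric Lemma~\ref{L:dxy2l}: for $x,\,y\in\bS_n$ with $d(x,\,y)=k$ one has $\sum_z G_\lambda(e,\,z)G_\lambda(z,\,x)G_\lambda(z,\,y)\le c\,H(\lambda)^{\,n+k/2}$, proved by approximating the tripod $\{e,\,x,\,y,\,z\}$ by a tree and applying Ancona along its legs. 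Summing over $k\le2n$ gives $\E[M_n^2]\le C\,H(\lambda)^{2n}\le C'(\E M_n)^2$ directly, with no subexponential loss $r(n)$. Your attempt to localize the split vertex near the confluent is the same computation in disguise, but you have not carried it out, and the statement ``$X_w$ must lie near the confluent'' is not literally true (the dominant contribution comes from there, but every $z$ contributes).

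On the almost-sure upgrade: because the paper obtains $\P\bigl(M_n\ge\tfrac12\E M_n\bigr)\ge c>0$ uniformly in $n$, it needs only a zero-one law to finish. This is Lemma~\ref{L:nonrandom}: writing $\eta=\limsup M_n^{1/n}$ and conditioning on the first generation, one has $\eta=\max_{u\in\sT_1}\eta_u$ with the $\eta_u$ i.i.d.\ copies of $\eta$, whence $\P(\eta\le h)=\varphi(\P(\eta\le h))$ with $\varphi$ the offspring generating function, forcing $\P(\eta\le h)\in\{0,\,1\}$. Your conditional Borel--Cantelli along a geometric subsequence would also work (and is needed if one only had $\P(\cdot)\ge c/r(n)$), but it is substantially more involved and, given the clean second-moment bound, unnecessary. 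The paper's reduction to finite variance and $\lambda<\rho^{-1}$ is also more direct than your thinning argument: it simply replaces $\nu$ by any stochastically smaller $\nu'$ with mean $\lambda-\varepsilon$ and finite variance, and uses continuity of $H$.
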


The proof of the upper bound for $\limsup M_n^{1/n}$ in Theorem~\ref{T:growth} is quite easy. In fact, by the well-known many-to-one formula, we have that $\E[Z_x] = G_{\lambda}(e,\, x)$. Therefore,
\begin{equation}
  \label{e:upper}
  1 \leq \E [M_n] \leq \sum_{x \in \bS_n} \E[Z_x] = \sum_{x \in \bS_n} G_{\lambda}(e, \, x) = H_n(\lambda).
\end{equation}
Thus for any $\varepsilon > 0$ we have
\[
  \P \left( M_n^{1/n} \geq H(\lambda) + \varepsilon \right) \leq \frac{\E[M_n]}{\left( H(\lambda) + \varepsilon \right)^n}.
\]
As a direct consequence of the Borel--Cantelli Lemma and~\eqref{e:Hr}, we get the following lemma.

\begin{lemm}
  \label{L:Mnupper}
  For $\lambda \in [1,\, \rho^{-1}]$, $\limsup_{n \to \infty} M_n^{1/n} \leq H(\lambda)$ a.s.
\end{lemm}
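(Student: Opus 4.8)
The plan is to deduce the upper bound from a first–moment estimate combined with the purely exponential growth of $H_n(\lambda)$ proved in Theorem~\ref{T:Hr}, via a standard Borel--Cantelli argument; everything needed is already in place, so the proof is short.

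First I would record the first-moment bound, which is exactly~\eqref{e:upper}: by the many-to-one formula $\E[Z_x] = G_\lambda(e,x)$ for every $x \in \Gamma$, hence summing over $\bS_n$ gives $\E[M_n] \le \sum_{x \in \bS_n} G_\lambda(e,x) = H_n(\lambda)$. By~\eqref{e:Hr} there is a constant $C \ge 1$ with $H_n(\lambda) \le C\,H(\lambda)^n$ for all $n \ge 0$, and by Theorem~\ref{T:Hr} we also know $H(\lambda) \ge 1 > 0$, so the quantities below are well defined.

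Next, fix $\varepsilon > 0$. Since $M_n$ is nonnegative, the event $\{M_n^{1/n} \ge H(\lambda)+\varepsilon\}$ coincides with $\{M_n \ge (H(\lambda)+\varepsilon)^n\}$, so Markov's inequality gives
\[
  \P\bigl(M_n^{1/n} \ge H(\lambda)+\varepsilon\bigr)
  \le \frac{\E[M_n]}{(H(\lambda)+\varepsilon)^n}
  \le C\left(\frac{H(\lambda)}{H(\lambda)+\varepsilon}\right)^n .
\]
Because $H(\lambda)/(H(\lambda)+\varepsilon) < 1$, the right-hand side is summable in $n$, so the Borel--Cantelli lemma yields that almost surely $M_n^{1/n} < H(\lambda)+\varepsilon$ for all sufficiently large $n$; in particular $\limsup_{n\to\infty} M_n^{1/n} \le H(\lambda)+\varepsilon$ almost surely. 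Applying this with $\varepsilon = 1/k$ for each $k \in \N$ and intersecting the resulting countably many full-probability events, I conclude $\limsup_{n\to\infty} M_n^{1/n} \le H(\lambda)$ almost surely.

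There is no real obstacle in this direction: the only two ingredients are the identity $\E[Z_x] = G_\lambda(e,x)$ and the two-sided estimate~\eqref{e:Hr}, both already available, and the rest is the routine calculation above. The substantive work lies in the matching lower bound $\limsup_{n\to\infty} M_n^{1/n} \ge H(\lambda)$, which requires a second-moment argument and a bound on the two-point correlation function of the BRW, and is handled separately.
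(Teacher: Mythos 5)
Your proof is correct and is essentially identical to the paper's: both use the many-to-one formula to get $\E[M_n]\le H_n(\lambda)$, then Markov's inequality together with the purely exponential bound $H_n(\lambda)\le C\,H(\lambda)^n$ from \eqref{e:Hr}, and conclude via Borel--Cantelli. The only cosmetic remark is that $H(\lambda)\ge 1$ is stated in Theorem~\ref{T:main1} rather than Theorem~\ref{T:Hr}, but what you actually need is just $H(\lambda)>0$, which is immediate from \eqref{e:Hr} and $H_0(\lambda)=G_\lambda(e,e)\ge 1$.
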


In the remainder of this section, we prove the lower bound for $\limsup_{n \to \infty} M_n^{1/n}$ in Theorem~\ref{T:growth} by applying the second moment method.

\subsection{The second moment}
Assume that $\nu$ has a finite second moment, i.e. $\sigma^2 := \sum_{k=1}^\infty k^2 \nu(k)<\infty$, in this subsection.
We start with the estimate for the correlation between $Z_x$ and $Z_y$ for $x$, $y \in \Gamma$.

\begin{lemm}
  \label{L:Zxy}
  Assume $\sigma^2 < \infty$. Then for $x$, $y \in \Gamma$,
  \begin{equation}
    \label{e:Ezxy}
    \E \left[ Z_x Z_y \right] \leq \sigma^2 \sum_{z \in \Gamma} G_{\lambda}(e, \, z) G_{\lambda}(z, \, x) G_{\lambda}(z, \, y).
  \end{equation}
\end{lemm}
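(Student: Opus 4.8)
The plan is to compute $\E[Z_xZ_y]$ by a ``many-to-two'' argument on the Galton--Watson tree $\sT$ underlying $\brw(\Gamma,\nu,\mu)$. Writing $Z_x=\sum_{u\in\sT}\mathbf 1[X_u=x]$, we have $Z_xZ_y=\sum_{u,w\in\sT}\mathbf 1[X_u=x]\,\mathbf 1[X_w=y]$, and I would first condition on $\sT$. For an ordered pair $(u,w)$ of vertices of $\sT$, let $v=u\wedge w$ and put $k=|v|$, $i=|u|-k$, $j=|w|-k$. The tree-indexed walk increments along the two $\sT$-paths $v\to u$ and $v\to w$ involve disjoint families of the i.i.d.\ increments $(\eta_u)$ (the paths diverge at the distinct children of $v$), hence are independent of each other and of the increments along $\emptyset\to v$; consequently
\[
\P\bigl(X_u=x,\ X_w=y\ \big|\ \sT\bigr)=\sum_{z\in\Gamma}p_k(e,z)\,p_i(z,x)\,p_j(z,y),
\]
with the convention $p_0(z,\cdot)=\mathbf 1[z=\cdot]$, which also takes care of the degenerate cases $i=0$ or $j=0$ (one of $u,w$ being an ancestor of the other).

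Taking expectations over $\sT$, the only tree-dependent factor that remains is the pair count
\[
A_{k,i,j}:=\E\bigl[\#\{(u,w)\in\sT^2:\ |u\wedge w|=k,\ |u|=k+i,\ |w|=k+j\}\bigr].
\]
I would evaluate $A_{k,i,j}$ by the branching property: $\E[\#\sT_k]=\lambda^k$; each vertex $v\in\sT_k$ has offspring number $N(v)$ with $\E[N(v)(N(v)-1)]=\sigma^2-\lambda$; and conditionally on the tree up to generation $k+1$, the subtrees rooted at distinct children of $v$ are independent Galton--Watson trees with expected generation sizes $\lambda^{i-1}$ and $\lambda^{j-1}$. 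This gives $A_{k,0,0}=\lambda^k$, $A_{k,i,0}=A_{k,0,i}=\lambda^{k+i}$ for $i\geq 1$, and $A_{k,i,j}=(\sigma^2-\lambda)\lambda^{k+i+j-2}$ for $i,j\geq 1$.

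The decisive (and elementary) observation is then that $A_{k,i,j}\leq\sigma^2\,\lambda^{k+i+j}$ for all $k,i,j\geq 0$: when $i=0$ or $j=0$ this is just $1\leq\sigma^2$, and when $i,j\geq 1$ it is $\sigma^2-\lambda\leq\sigma^2\lambda^2$; both hold because $\sigma^2\geq\lambda\geq 1$. Since every term below is nonnegative (so Tonelli's theorem applies and no convergence issue arises, the bound being vacuous if the right-hand side is infinite),
\begin{align*}
\E[Z_xZ_y]
&=\sum_{k,i,j\geq 0}A_{k,i,j}\sum_{z\in\Gamma}p_k(e,z)\,p_i(z,x)\,p_j(z,y)\\
&\leq\sigma^2\sum_{k,i,j\geq 0}\lambda^{k+i+j}\sum_{z\in\Gamma}p_k(e,z)\,p_i(z,x)\,p_j(z,y)
=\sigma^2\sum_{z\in\Gamma}G_\lambda(e,z)\,G_\lambda(z,x)\,G_\lambda(z,y),
\end{align*}
which is the claim.

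The part needing care is the many-to-two bookkeeping: checking that, conditionally on $\sT$, the two branches from $v=u\wedge w$ really carry independent random-walk segments, and computing the expected pair counts $A_{k,i,j}$ correctly --- in particular handling the degenerate ancestor cases $i=0$ or $j=0$. Once that is in place, everything else is the one-line estimate using $\sigma^2\geq\lambda\geq 1$.
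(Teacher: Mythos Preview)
Your argument is correct and follows essentially the same many-to-two approach as the paper: decompose pairs $(u,w)$ according to $|u\wedge w|=k$, bound the expected pair count by $\sigma^2\lambda^{k+i+j}$, and sum. The paper simply asserts the bound $\E[\#\{(u,w):u\in\sT_m,\,w\in\sT_n,\,|u\wedge w|=k\}]\leq\sigma^2\lambda^{m+n-k}$ without computation, whereas you compute $A_{k,i,j}$ exactly and verify the inequality case by case; the extra bookkeeping you do (the degenerate ancestor cases and the check $\sigma^2-\lambda\leq\sigma^2\lambda^2$) is precisely what justifies that assertion.
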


\begin{proof}
  Note that $Z_x = \sum_{m = 0}^{\infty} \sum_{u \in \sT_m} \id_{\{X_u
    = x \}}$. For $m$, $n \ge k$, conditioned on $|u \wedge w| = k$, the expectation of number of pairs $(u, \, w)$ with $u \in \sT_m$ and $w \in \sT_n$ is at most $\sigma^2 \lambda^{m + n - k}$.  Then
  \begin{align*}
    \E \left[ Z_x Z_y \right]
%    =& \E \left[ \sum_{m,\, n=0}^{\infty} \sum_{u \in \sT_m} \sum_{w \in \sT_n} \id_{\left\{ X_u = x,\, X_w = y \right\}} \right] \\
    =& \E \left[ \sum_{m,\, n=0}^{\infty} \sum_{k=0}^{m \wedge n} \sum_{z \in \Gamma} \sum_{u \in \sT_m} \sum_{w \in \sT_n} \id_{\left\{ X_u = x,\, X_w = y,\, X_{u \wedge w} = z,\, | u \wedge w | = k \right\} } \right] \\
    \leq & \sigma^2 \sum_{k=0}^{\infty} \sum_{m=k}^{\infty} \sum_{n=k}^{\infty} \sum_{z \in \Gamma} \lambda^k p_k(e, \, z) \lambda^{m-k} p_{m-k}(z, \, x) \lambda^{n-k} p_{n-k}(z, \, y) \\
    =& \sigma^2 \sum_{z \in \Gamma} G_{\lambda}(e, \, z) G_{\lambda}(z, \, x) G_{\lambda}(z, \, y).
  \end{align*}
\end{proof}

\begin{cor}
  \label{C:EMnlower}
  Assume $\lambda \in [1,\, \rho^{-1})$ and $\sigma^2 < \infty$. There is a constant $c > 0$ such that $\E[M_n] \geq c \, H(\lambda)^n$ for all $n \geq 0$.
\end{cor}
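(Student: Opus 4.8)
To prove Corollary~\ref{C:EMnlower} I would run a second moment argument for $M_n=|\sP_n|$, feeding it the spatial two‑point bound of Lemma~\ref{L:Zxy}. Since the many‑to‑one formula gives $\E Z_x=G_\lambda(e,x)$, one has $\E\bigl[\sum_{x\in\bS_n}Z_x\bigr]=H_n(\lambda)$; writing $\sum_x Z_x=\sum_x Z_x\,\id_{\{Z_x\ge 1\}}$ and applying the Cauchy--Schwarz inequality first in the sum over $x$ and then for $\E$,
\[
  H_n(\lambda)^2=\Bigl(\E\sum_{x\in\bS_n}Z_x\Bigr)^2\le \E\Bigl[\sum_{x\in\bS_n}Z_x^2\Bigr]\cdot\E[M_n].
\]
So it suffices to prove $\sum_{x\in\bS_n}\E[Z_x^2]\le C\,H_n(\lambda)$, because then Theorem~\ref{T:Hr} yields $\E[M_n]\ge H_n(\lambda)/C\ge c\,H(\lambda)^n$. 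By Lemma~\ref{L:Zxy} with $y=x$ this reduces, upon summing over $x\in\bS_n$, to the deterministic estimate
\[
  \sum_{z\in\Gamma}G_\lambda(e,z)\,G_\lambda(z,x)^2\ \le\ C\,G_\lambda(e,x)\qquad\text{for every }x\in\Gamma
\]
(with $C$ allowed to depend on the fixed $\lambda<\rho^{-1}$); this estimate is the heart of the matter.

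For it, fix $x$ with $|x|=n$, fix a geodesic $[e,x]$ with consecutive vertices $o_0=e,o_1,\dots,o_n=x$, and for $z\in\Gamma$ let $o(z)$ be a coarse median of $\{e,z,x\}$, so that (by hyperbolicity of $\Gamma$, i.e. thin triangles) $o(z)$ lies within $O(\delta)$ of each of $[e,z]$, $[z,x]$ and $[e,x]$. The uniform Ancona inequality (Theorem~\ref{T:ancona}) gives $G_\lambda(e,z)\le C\,G_\lambda(e,o(z))G_\lambda(o(z),z)$ and $G_\lambda(z,x)\le C\,G_\lambda(z,o(z))G_\lambda(o(z),x)$, while a first‑passage decomposition (restricting the paths defining $G_\lambda(e,x)$ to those visiting $o(z)$) gives $G_\lambda(e,o(z))G_\lambda(o(z),x)\le G_\lambda(e,e)\,G_\lambda(e,x)$; since $\mu$ is symmetric these combine to
\[
  G_\lambda(e,z)\,G_\lambda(z,x)^2\ \le\ C\,G_\lambda(e,x)\,G_\lambda(o(z),x)\,G_\lambda(o(z),z)^3 .
\]
Grouping the $z$'s according to which vertex $o_j$ of $[e,x]$ is closest to $o(z)$ (every median lies in an $O(\delta)$‑neighbourhood of $[e,x]$, so the corresponding sets cover $\Gamma$), and using that moving an endpoint of a Green function by a bounded distance changes it only by a bounded factor, one obtains
\[
  \sum_{z\in\Gamma}G_\lambda(e,z)\,G_\lambda(z,x)^2\ \le\ C\,G_\lambda(e,x)\sum_{j=0}^{n}G_\lambda(o_j,x)\cdot\sum_{z\in\Gamma}G_\lambda(o_j,z)^3 .
\]

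It then remains to bound the two sums on the right uniformly in $x$, and this is exactly where the hypothesis $\lambda<\rho^{-1}$ is used. First, $\sum_{z\in\Gamma}G_\lambda(o_j,z)^3\le\bigl(\sup_{w}G_\lambda(e,w)\bigr)\sum_{w\in\Gamma}G_\lambda(e,w)^2<\infty$, since $\sum_w G_\lambda(e,w)^2<\infty$ for $\lambda<\rho^{-1}$ (\cite[Proposition 1.9]{GL13}, as in the proof of Theorem~\ref{T:Hr}) and in particular $G_\lambda(e,w)\to 0$ as $|w|\to\infty$. Second, $\sum_{j=0}^{n}G_\lambda(o_j,x)=\sum_{\ell=0}^{n}G_\lambda(e,y_\ell)$ where $y_\ell:=o_{n-\ell}^{-1}x$ runs along a geodesic ray from $e$; putting $g(\ell):=\sup\{G_\lambda(e,w):|w|=\ell,\ w\text{ lies on a geodesic through }e\}$, the Ancona inequality applied at the midpoint gives $g(2\ell)\le C\,g(\ell)^2$, so once $C\,g(\ell_0)<1$ — which holds for some $\ell_0$ because $G_\lambda(e,w)\to 0$ — the $g(\ell)$ decay super‑geometrically and $\sum_\ell g(\ell)<\infty$. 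Substituting these two bounds gives $\sum_z G_\lambda(e,z)G_\lambda(z,x)^2\le C\,G_\lambda(e,x)$, hence $\sum_{x\in\bS_n}\E[Z_x^2]\le C\,H_n(\lambda)$, which completes the proof.

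The main obstacle is the deterministic estimate $\sum_z G_\lambda(e,z)G_\lambda(z,x)^2\le C\,G_\lambda(e,x)$, i.e. showing that the most efficient way to route a pair of independent walks from $e$ to $x$ through a common ancestor $z$ forces $z$ to sit essentially on the geodesic $[e,x]$, and that the off‑geodesic contributions are summable. On a tree this is a one‑line geometric‑series calculation; in a hyperbolic group it genuinely requires the uniform Ancona inequalities (to replace exact multiplicativity of $G_\lambda$ along geodesics), the coarse tree approximation (to produce and locate the median $o(z)$), and the square‑summability of $G_\lambda(e,\cdot)$ — and it is precisely this last ingredient that fails at $\lambda=\rho^{-1}$, consistent with the corollary being stated only for $\lambda<\rho^{-1}$. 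Everything else is routine.
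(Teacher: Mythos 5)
Your proof is correct and reaches the same conclusion, but the route you take for the crucial estimate is considerably more laborious than the paper's, and one of your remarks about what is ``genuinely required'' is not accurate. The reduction step (Cauchy--Schwarz in $x$, then Cauchy--Schwarz for $\E$, giving $\E[M_n]\ge H_n(\lambda)^2/\sum_{x\in\bS_n}\E[Z_x^2]$) is fine and is morally the same as the paper's use of the Paley--Zygmund inequality applied vertex-by-vertex: both funnel through the uniform bound $\E[Z_x^2]\le C\,\E[Z_x]=C\,G_\lambda(e,x)$, then $H_n(\lambda)\ge c\,H(\lambda)^n$ from Theorem~\ref{T:Hr}.

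The real divergence is in proving $\E[Z_x^2]\le C\,G_\lambda(e,x)$, i.e.\ $\sum_z G_\lambda(e,z)\,G_\lambda(z,x)^2\le C\,G_\lambda(e,x)$. You deploy coarse medians, the uniform Ancona inequality, the first-passage identity $G_\lambda(e,o)G_\lambda(o,x)\le G_\lambda(e,e)G_\lambda(e,x)$, square-summability of $G_\lambda(e,\cdot)$, and a submultiplicativity argument for $g(\ell)=\sup_{|w|=\ell}G_\lambda(e,w)$. Each of these steps can be made rigorous, but the whole chain is heavy machinery. The paper instead proves this inequality with no geometry at all: starting from the uniform heat-kernel bound $p_m(x,y)\le\rho^m$, it writes
\[
G_\lambda(z,x)^2\le\frac{2}{1-\lambda\rho}\sum_{m\ge0}(\lambda\rho)^m\lambda^m p_m(z,x),
\]
then multiplies by $G_\lambda(e,z)=\sum_k\lambda^k p_k(e,z)$, sums over $z$ using the semigroup property $\sum_z p_k(e,z)p_m(z,x)=p_{k+m}(e,x)$, and collects the geometric series in $m$, arriving at $\sum_z G_\lambda(e,z)G_\lambda(z,x)^2\le\tfrac{2}{(1-\lambda\rho)^2}G_\lambda(e,x)$. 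This argument is valid on \emph{any} group as long as $\lambda<\rho^{-1}$, which is precisely where the factor $(1-\lambda\rho)^{-2}$ comes from. Your closing remark that the estimate ``genuinely requires the uniform Ancona inequalities'' and ``the coarse tree approximation'' is therefore not right: those tools become necessary elsewhere in the paper (e.g.\ in Lemma~\ref{L:dxy2l}, which handles general pairs $x\ne y$), but for the diagonal case $x=y$ the elementary semigroup argument suffices. What \emph{is} essential, as you correctly identify, is $\lambda<\rho^{-1}$: both the paper's factor $(1-\lambda\rho)^{-2}$ and your use of $\sum_w G_\lambda(e,w)^2<\infty$ blow up at $\lambda=\rho^{-1}$.
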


\begin{proof}
Recall from~\cite[Proposition 6.6]{LP16} that $p_m(x,\, y) \leq \rho^m$. As a consequence,
\[
  G_{\lambda}(z,\, x)^2
\leq
2 \sum_{m=0}^{\infty} \sum_{k=m}^{\infty} \lambda^{m + k} p_m(z,\, x) p_k(z,\, x)
\leq
\frac{2}{1 - \lambda \rho} \sum_{m=0}^{\infty} \left( \lambda \rho \right)^m \lambda^m p_m(z,\, x).
\]
Therefore, we have from Lemma~\ref{L:Zxy} and semigroup property of the heat kernel that
\[
\E[Z_x^2]
\leq
\frac{2 \sigma^2}{1 - \lambda \rho} \sum_{k=0}^{\infty} \sum_{m=0}^{\infty} \left( \lambda \rho \right)^m \lambda^{m+k} \sum_{z \in \Gamma} p_k(e,\, z) p_m(z, \, x)
\leq
\frac{2 \sigma^2}{(1-\lambda \rho)^2} G_{\lambda}(e,\, x).
\]
By the Paley--Zygmund inequality,
\[
  \P \left( Z_x \geq 1 \right)
\geq
\frac{\left( \E[Z_x] \right)^2}{\E[Z_x^2]}
=
\frac{G_{\lambda}(e,\, x)^2}{\E[Z_x^2]}
\geq
c_1 G_{\lambda}(e, \, x)
\]
for some constant $c_1 > 0$. This corollary follows from Theorem~\ref{T:Hr} and the fact that $\E[M_n] =  \sum_{x \in \bS_n} \P \left( Z_x \geq 1 \right)$.
\end{proof}

The following lemma is crucial for our estimates and will also be used in Section~\ref{s:dim}. We present its proof at the end of this section.

\begin{lemm}
  \label{L:dxy2l}
  For every $\lambda \in [1,\, \rho^{-1})$, there exists a constant $c > 0$ such that for $0 \leq k \leq 2 n$,
  \begin{equation}
    \label{e:dxy2l}
    \sum_{\stackrel{x, \, y \in \bS_n}{d(x, \, y) = k}} \sum_{z \in \Gamma} G_{\lambda}(e, \, z) G_{\lambda}(z, \, x) G_{\lambda}(z, \, y)
    \leq
    c \, H(\lambda)^{n + k / 2}.
  \end{equation}
\end{lemm}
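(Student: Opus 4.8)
The plan is to reduce, for each fixed pair $x,y$, the inner sum $\sum_{z}G_{\lambda}(e,z)G_{\lambda}(z,x)G_{\lambda}(z,y)$ to a sum over an approximate geodesic tripod on $\{e,x,y\}$ via the uniform Ancona inequality (Theorem~\ref{T:ancona}), and then to evaluate the outcome using the purely exponential growth $H_{m}(\lambda)\asymp H(\lambda)^{m}$ of Theorem~\ref{T:Hr}. I would first record an auxiliary decay estimate: for $\lambda\in[1,\rho^{-1})$ there are $C,\varepsilon>0$ with $G_{\lambda}(e,z)\le C\e^{-\varepsilon|z|}$ for all $z$. This follows by splitting $G_{\lambda}(e,z)=\sum_{n}\lambda^{n}p_{n}(e,z)$ at $n\approx\varepsilon|z|$: for small $n$ one bounds $p_{n}(e,z)\le\P(|Y_{n}|\ge|z|)\le\e^{-t|z|}(\E\e^{t|\xi_{1}|})^{n}$, finite for every $t$ since $\mu$ is superexponential (the estimate underlying~\eqref{e:ldpRW}); for large $n$ one uses $p_{n}(e,z)\le\rho^{n}$ and $\lambda\rho<1$. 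Two consequences are used below: along any geodesic segment $(z_{i})_{i\ge0}$ one has $\sum_{i}G_{\lambda}(e,z_{i})\le C(1-\e^{-\varepsilon})^{-1}=:C_{1}$; and, together with $\sum_{z}G_{\lambda}(e,z)^{2}=c(\lambda)<\infty$ (from~\cite[Proposition 1.9]{GL13}, valid for $\lambda<\rho^{-1}$), also $\sum_{z\in\Gamma}G_{\lambda}(e,z)^{3}\le(\sup_{z}G_{\lambda}(e,z))\,c(\lambda)<\infty$. These two facts are precisely where the strict inequality $\lambda<\rho^{-1}$ enters.

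\emph{Step 1 (reduction to a tripod).} Fix $x,y\in\bS_{n}$ with $d(x,y)=k$; on a chosen geodesic $[e,x]$ let $w$ be the point at distance $\lfloor n-k/2\rfloor=\lfloor(x|y)_{e}\rfloor$ from $e$, so $d(e,w)=\lfloor n-k/2\rfloor$ and $d(w,x),d(w,y)=\lceil k/2\rceil+O(\delta)$, and put $T_{x,y}=[e,w]\cup[w,x]\cup[w,y]$. For $z\in\Gamma$ let $p=p(z)$ be a nearest point of $T_{x,y}$ to $z$. Approximating the four points $e,x,y,z$ by a metric tree (\cite[Theorem 2.12]{GH90}) shows that the geodesics $[e,z]$, $[x,z]$, $[y,z]$ all pass within $O(\delta)$ of $p$, so the uniform Ancona inequality, applied in both directions (and using $G_{\lambda}(u,v)=G_{\lambda}(v,u)$ for symmetric $\mu$), gives
\[
G_{\lambda}(e,z)G_{\lambda}(z,x)G_{\lambda}(z,y)\le C\,G_{\lambda}(e,p)G_{\lambda}(p,x)G_{\lambda}(p,y)\,G_{\lambda}(p,z)^{3}.
\]
Summing over all $z$ with a given projection $p$, bounding $\sum_{z:\,p(z)=p}G_{\lambda}(p,z)^{3}\le\sum_{z\in\Gamma}G_{\lambda}(e,z)^{3}<\infty$ by the auxiliary estimate, and absorbing the bounded number of $O(\delta)$-approximate projections into the constant, one obtains
\[
\sum_{z\in\Gamma}G_{\lambda}(e,z)G_{\lambda}(z,x)G_{\lambda}(z,y)\le C\sum_{p\in T_{x,y}}G_{\lambda}(e,p)G_{\lambda}(p,x)G_{\lambda}(p,y).
\]

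\emph{Step 2 (summing over the tripod and over $x,y$).} Split $T_{x,y}$ into its three legs and factor once more through $w$: for $p\in[e,w]$ one has $G_{\lambda}(p,x)\asymp G_{\lambda}(p,w)G_{\lambda}(w,x)$, $G_{\lambda}(p,y)\asymp G_{\lambda}(p,w)G_{\lambda}(w,y)$ and $G_{\lambda}(e,p)G_{\lambda}(p,w)\asymp G_{\lambda}(e,w)$, and for $p\in[w,x]\cup[w,y]$ one uses instead $G_{\lambda}(e,p)\asymp G_{\lambda}(e,w)G_{\lambda}(w,p)$ and $G_{\lambda}(w,p)G_{\lambda}(p,x)\asymp G_{\lambda}(w,x)$; in all cases $G_{\lambda}(e,p)G_{\lambda}(p,x)G_{\lambda}(p,y)\asymp G_{\lambda}(e,w)G_{\lambda}(w,x)G_{\lambda}(w,y)\,G_{\lambda}(w,p)$. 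Since each leg is a geodesic segment issuing from $w$, the auxiliary estimate gives $\sum_{p\in\mathrm{leg}}G_{\lambda}(w,p)\le C_{1}$, so the left-hand side of Step~1 is $\le C\,G_{\lambda}(e,w)G_{\lambda}(w,x)G_{\lambda}(w,y)$. Now as $(x,y)$ runs over pairs in $\bS_{n}$ with $d(x,y)=k$, the centre $w=w(x,y)$ lies in $\bS_{\lfloor n-k/2\rfloor}$, and for fixed $w$ the translation $x\mapsto w^{-1}x$ (on $x\in\bS_{n}$ with $w\in[e,x]$) maps into $\bigcup_{|j-\lceil k/2\rceil|=O(\delta)}\bS_{j}$, whence $\sum_{x}G_{\lambda}(w,x)\le C\,H_{\lceil k/2\rceil}(\lambda)$. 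Therefore
\[
\sum_{\substack{x,y\in\bS_{n}\\ d(x,y)=k}}\ \sum_{z\in\Gamma}G_{\lambda}(e,z)G_{\lambda}(z,x)G_{\lambda}(z,y)\le C\!\!\sum_{w\in\bS_{\lfloor n-k/2\rfloor}}\!\!G_{\lambda}(e,w)\,H_{\lceil k/2\rceil}(\lambda)^{2}=C\,H_{\lfloor n-k/2\rfloor}(\lambda)\,H_{\lceil k/2\rceil}(\lambda)^{2},
\]
and by~\eqref{e:Hr} the right-hand side is $\le c\,H(\lambda)^{\lfloor n-k/2\rfloor+2\lceil k/2\rceil}\le c'\,H(\lambda)^{n+k/2}$, since the floors, ceilings and the $O(\delta)$ slacks cost only a bounded power of $H(\lambda)\le\e^{v/2}$.

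\emph{Main obstacle.} The delicate point is Step~1: one must dominate the contribution of \emph{every} $z$, including those whose geodesics to $e$, $x$ and $y$ share a long common excursion away from the tripod, by (the tripod projection of $z$) $\times$ (that excursion). This is where the hyperbolic geometry is used — tree approximation of four-point configurations and the behaviour of nearest-point projections onto geodesics — and it is where one needs the excursion series $\sum_{z}G_{\lambda}(e,z)^{3}$ and the per-leg series $\sum_{p}G_{\lambda}(w,p)$ to converge; both diverge at $\lambda=\rho^{-1}$, which is why the lemma is restricted to $\lambda<\rho^{-1}$.
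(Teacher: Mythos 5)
Your proof is correct and takes essentially the same route as the paper: reduce the inner sum to the geodesic tripod on $\{e,x,y\}$ by sending each $z$ to a nearest point $p$ on the tripod, apply the uniform Ancona inequality to factor $G_\lambda(e,z)$, $G_\lambda(z,x)$, $G_\lambda(z,y)$ through $p$ and through the centre $w$, and finish with the purely exponential growth $H_m(\lambda)\asymp H(\lambda)^m$ from Theorem~\ref{T:Hr}. The paper carries out your Step~1 more explicitly — it partitions $z$ into $\Omega_1,\Omega_2,\Omega_3$ according to which leg contains the projection and verifies via the four-point inequality~\eqref{e:4points} (its Cases~1 and~2, giving~\eqref{e:tree4}--\eqref{e:tree5}) that both the projection and $w$ lie within $O(\delta)$ of $[z,x]$ and $[z,y]$, which is exactly what you compress into the one-line appeal to four-point tree approximation; your further reorganization that sums out the leg variable to get a single factor $G_\lambda(e,w)G_\lambda(w,x)G_\lambda(w,y)$, justified by the pointwise decay $G_\lambda(e,z)\le C\e^{-\varepsilon|z|}$ and $\sum_z G_\lambda(e,z)^3<\infty$ in place of the paper's direct use of $\sum_w G_\lambda(u,w)^2<\infty$, is a tidy but inessential variant that yields the same $H_{\lfloor n-k/2\rfloor}(\lambda)\,H_{\lceil k/2\rceil}(\lambda)^2$ bound.
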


Now we are ready to establish the upper bound for $\E[M_n^2]$.

\begin{cor}
  \label{C:varHn}
Assume $\lambda \in [1, \, \rho^{-1})$ and $\sigma^2 < \infty$. There is a constant $c > 0$ such that $\E[M_n^2] \leq c \, \left( \E[M_n] \right)^2$ for every $n \geq 0$.
\end{cor}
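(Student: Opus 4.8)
The plan is to reduce the estimate to the two preceding lemmas followed by a geometric summation. Since $M_n$ is the number of $x\in\bS_n$ with $Z_x\geq1$, write $M_n=\sum_{x\in\bS_n}\id_{\{Z_x\geq1\}}$. As each $Z_x$ is a nonnegative integer, $\id_{\{Z_x\geq1\}}\leq Z_x$, so
\[
  \E[M_n^2]=\sum_{x,\,y\in\bS_n}\E\!\left[\id_{\{Z_x\geq1\}}\id_{\{Z_y\geq1\}}\right]\leq\sum_{x,\,y\in\bS_n}\E[Z_xZ_y],
\]
and substituting the bound of Lemma~\ref{L:Zxy} for each pair gives
\[
  \E[M_n^2]\leq\sigma^2\sum_{x,\,y\in\bS_n}\sum_{z\in\Gamma}G_\lambda(e,\,z)\,G_\lambda(z,\,x)\,G_\lambda(z,\,y).
\]

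Next I would organize the pairs $(x,y)$ according to the value of $k=d(x,y)$, which runs over $0\leq k\leq 2n$ because $x,y\in\bS_n$, and bound each stratum by Lemma~\ref{L:dxy2l}:
\[
  \E[M_n^2]\leq c\,\sigma^2\sum_{k=0}^{2n}H(\lambda)^{\,n+k/2}.
\]
The series in $k$ sums to $O\!\big(H(\lambda)^{2n}\big)$ precisely because $H(\lambda)>1$; this holds for $\lambda\in(1,\rho^{-1})$ by the strict monotonicity of $H$ from Theorem~\ref{T:Hr} together with $H(1)\geq1$ (which in turn follows from $\sum_nH_n(1)=\sum_{x\in\Gamma}G_1(e,x)=\infty$), while the endpoint $\lambda=1$ forces $\nu=\delta_1$, so that $\BRW(\Gamma,\nu,\mu)$ is a single random walk and the estimate is elementary. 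Hence $\E[M_n^2]\leq c_1\sigma^2H(\lambda)^{2n}$ for every $n$, whereas Corollary~\ref{C:EMnlower} gives $\E[M_n]\geq c_2H(\lambda)^n$; therefore $(\E[M_n])^2\geq c_2^2H(\lambda)^{2n}\geq\big(c_2^2/(c_1\sigma^2)\big)\,\E[M_n^2]$, which is the assertion.

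The computation is short because the content has been packed into Lemmas~\ref{L:Zxy} and~\ref{L:dxy2l}; within the present argument the only delicate point is that the $k$-sum converges with dominant term $H(\lambda)^{2n}$, and here the exact shape of Lemma~\ref{L:dxy2l} is essential. Had the exponent been $n+k$ instead of $n+k/2$, the dominant stratum ($k$ close to $2n$, i.e.\ pairs of points lying in nearly opposite directions) would contribute $H(\lambda)^{3n}$, and then $\E[M_n^2]/(\E[M_n])^2\asymp H(\lambda)^{n}\to\infty$ would defeat the second moment method. The factor $\tfrac12$ in that exponent --- the same ``backscattering'' effect producing the critical exponent $\tfrac12$ --- is what keeps $\max_{0\leq k\leq 2n}H(\lambda)^{\,n+k/2}=H(\lambda)^{2n}$ and allows the $L^2$ bound $\E[M_n^2]\asymp(\E[M_n])^2$ to close. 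So I expect the real obstacle to lie not in this corollary but in Lemma~\ref{L:dxy2l}, whose proof should rest on the uniform Ancona inequalities along the geodesic tripod of $\{e,x,y\}$, the branch point sitting at distance $n-k/2$ from $e$ explaining the $k/2$.
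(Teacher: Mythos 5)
Your argument is the paper's argument: the proof there is exactly one line (``By Lemmas~\ref{L:Zxy} and~\ref{L:dxy2l}, we have that $\E[M_n^2]\le c_1H(\lambda)^{2n}$\ldots Applying Corollary~\ref{C:EMnlower} we complete the proof''), and you simply supply the intermediate steps that this line implicitly invokes --- the pointwise bound $\id_{\{Z_x\ge1\}}\id_{\{Z_y\ge1\}}\le Z_xZ_y$, the stratification of the $(x,y)$-sum by $k=d(x,y)$, and the geometric summation in $k$. Your parenthetical worry about $H(\lambda)>1$ is a legitimate catch that the paper silently skips: the bound $\sum_{0\le k\le 2n}H(\lambda)^{n+k/2}=O(H(\lambda)^{2n})$ really does require $H(\lambda)>1$, and at $\lambda=1$ (forcing $\nu=\delta_1$) one can indeed have $H(1)=1$ --- e.g.\ for the simple random walk on $\F_q$ one has $G_1(e,x)=G_1(e,e)(2q-1)^{-|x|}$, so $H_n(1)$ is constant --- in which case summing Lemma~\ref{L:dxy2l} over $k$ only yields $\E[M_n^2]=O(n)$, not $O(H(\lambda)^{2n})=O(1)$, and Corollary~\ref{C:EMnlower} gives $\E[M_n]\ge c$ but not $\E[M_n]\ge c\sqrt n$, so the ratio is not controlled by this route. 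The corollary is still true at $\lambda=1$, but your dismissal of that endpoint as ``elementary'' is optimistic: the clean derivation ($\E[Z_xZ_y]\le G_1(e,x)G_1(x,y)+G_1(e,y)G_1(y,x)$ and then $\sum_{y\in\bS_n}G_1(x,y)=O(1)$ for fixed $x\in\bS_n$) still rests on Ancona, essentially re-running a sharper, $k$-unsummed version of Lemma~\ref{L:dxy2l}.
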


\begin{proof}
  By Lemmas~\ref{L:Zxy} and~\ref{L:dxy2l}, we have that $\E[M_n^2] \leq c_1\, H(\lambda)^{2n}$ for some $c_1 > 0$. Applying Corollary~\ref{C:EMnlower} we complete the proof of this result.
\end{proof}

\subsection{Proof of Theorem~\ref{T:growth}}

\begin{proof}[Proof of Theorem~\ref{T:growth}]
  By Lemma~\ref{L:Mnupper}, it suffices to prove that $\limsup_{n \to \infty} M_n^{1/n} \geq H(\lambda)$ almost surely.

  We first assume that $\lambda \in [1,\, \rho^{-1})$ and $\sigma^2 < \infty$. By Corollary~\ref{C:EMnlower}, there is $c_1 > 0$ such that $\E[M_n] \geq c_1 H(\lambda)^n$. Applying the Paley--Zygmund inequality and Corollary~\ref{C:varHn},
  \[
    \P \left( M_n \geq 2^{-1} c_1 H(\lambda)^n \right)
    \geq
    \P \left( M_n \geq 2^{-1} \E [M_n] \right)
    \geq
    \frac{\left( \E[M_n] \right)^2}{4 \E[M_n^2]}
    \geq
    \frac{c_{2}}{4}
  \]
  for some $c_2 > 0$. Therefore, with probability at least $c_2/4 > 0$, the events
  \[
    \left\{ M_n^{1/n} \geq \left( c_1 / 2 \right)^{1/n} H(\lambda) \right\}
  \]
  occur for infinitely many $n$. Applying Lemma~\ref{L:nonrandom} below, we prove the theorem for $\lambda \in [1, \, \rho^{-1})$.

It remains to consider the case $\lambda = \rho^{-1}$ or $\sigma^2 = \infty$. %, that is, the offspring distribution $\nu$ is a probability measure with mean $\rho^{-1}$.
For any small enough $\varepsilon > 0$, we may construct another probability measure $\nu'$ on $\N$ with mean $\lambda - \varepsilon$ and finite second moment such that $\nu'$ is stochastically dominated by $\nu$. Let $M_n'$ be the number of vertices $x \in \bS_n$ that are visited by ${\rm BRW}(\Gamma, \, \nu', \, \mu)$ starting at $e$. It is easy to see that $M_n'$ is also stochastically dominated by $M_n$. Therefore,
\[
\P \left( \limsup_{n \to \infty} M_n^{1/n} \geq H(\lambda - \varepsilon) \right)
\geq
\P \left( \limsup_{n \to \infty} (M_n')^{1/n} \geq H(\lambda - \varepsilon) \right)
=
1.
\]
Since $H$ is continuous in $[1, \, \rho^{-1}]$, we obtain that $\limsup_{n \to \infty} M_n^{1/n} \geq H(\lambda)$ a.s., which complete the proof of this theorem.
\end{proof}

\begin{lemm}
\label{L:nonrandom}
  For every $\lambda \in [1, \, \rho^{-1}]$, the limit $\limsup_{n \to \infty} M_n^{1/n}$ is almost surely a constant.
\end{lemm}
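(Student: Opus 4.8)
The plan is to combine the branching structure of the BRW with the elementary observation that, for a nonnegative sequence $(a_n)$, the quantity $\limsup_n a_n^{1/n}$ is unchanged under multiplication of $(a_n)$ by a fixed positive constant and under replacing $a_n$ by $\max_{|j-n|\le c}a_j$ for any fixed $c\ge0$. I would record this deterministic fact first; for the shift part one writes $\max_{|j-n|\le c}a_j=a_{j_n}$ with $|j_n-n|\le c$ and uses $a_{j_n}^{1/n}=\big(a_{j_n}^{1/j_n}\big)^{j_n/n}$ together with $j_n/n\to1$.

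Next I would set $L:=\limsup_n M_n^{1/n}$ and extract a branching recursion for its distribution. Fix $k\ge1$ and condition on the first $k$ generations of the underlying Galton--Watson tree $\sT$: for each $v\in\sT_k$ the descendants of $v$ form an independent copy of the whole BRW, translated by the (random) position $X_v$ of $v$, and these copies are independent of $\sT_k$. Writing $A^{(v)}_n$ for the set of points of $\bS_n$ ever visited by the subtree rooted at $v$, one has $\sP\cap\bS_n=\bigcup_{v\in\sT_k}A^{(v)}_n$ for all $n$ exceeding the (finite) distances of the generation-$(<k)$ particles, hence
\[
  \max_{v\in\sT_k}|A^{(v)}_n|\;\le\;M_n\;\le\;|\sT_k|\,\max_{v\in\sT_k}|A^{(v)}_n| ,
\]
and taking $n$-th roots gives $L=\max_{v\in\sT_k}\tilde L_v$ with $\tilde L_v:=\limsup_n|A^{(v)}_n|^{1/n}$. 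Since $|A^{(v)}_n|$ is the number of points $y$ of an (un-translated) independent copy of the BRW with $|X_vy|=n$, and $\big||X_vy|-|y|\big|\le|X_v|$, the deterministic fact applied conditionally on $X_v$ shows that each $\tilde L_v$ has the law of $L$ and that, conditionally on $\sT_k$, the family $\{\tilde L_v:v\in\sT_k\}$ is i.i.d.\ with that law.

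If $\lambda>1$ then $\nu\ne\delta_1$, and since $\nu(0)=0$ an elementary argument gives $|\sT_k|\uparrow\infty$ almost surely. Let $\ell$ be the essential supremum of $L$, which is finite because $L\le H(\lambda)\le\e^{v/2}$ a.s.\ by Lemma~\ref{L:Mnupper} and Theorem~\ref{T:Hr}. For $\varepsilon>0$ put $p:=\P(L\le\ell-\varepsilon)<1$; the recursion yields $\P(L\le\ell-\varepsilon)=\E\big[p^{|\sT_k|}\big]$, which tends to $0$ as $k\to\infty$ by bounded convergence. Hence $\P(L\le\ell-\varepsilon)=0$ for every $\varepsilon>0$, i.e.\ $L=\ell$ almost surely.

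The case $\lambda=1$ is degenerate and must be handled on its own: then $\nu=\delta_1$, the tree is a single ray, the recursion above is empty, and the BRW is just the random walk $Y_n=\xi_1\cdots\xi_n$ with $\xi_i$ i.i.d.\ $\sim\mu$. Here I would argue directly that replacing $\xi_1,\dots,\xi_m$ by any other values left-multiplies $Y_j$ by a fixed $g\in\Gamma$ for all $j\ge m$, so that for large $n$ the value of $M_n$ only gets compared with $\max_{|j-n|\le|g|}M_j$; by the deterministic fact this leaves $L$ unchanged for every realization. Thus $L$ is measurable with respect to the tail $\sigma$-field of the i.i.d.\ sequence $(\xi_i)$, and Kolmogorov's $0$--$1$ law gives that $L$ is a.s.\ constant. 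The point requiring care is the recursion step: one must check that the subtree limsups $\tilde L_v$ really are distributed as $L$ although they are ``based at'' the random group elements $X_v$ — this is precisely where the shift-insensitivity of $\limsup(\cdot)^{1/n}$ enters — and that the conditioning on $\sT_k$ is arranged so that the $\tilde L_v$ are conditionally i.i.d.; the remaining estimates are routine.
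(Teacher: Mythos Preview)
Your proof is correct and follows essentially the same branching-recursion idea as the paper: both derive $L=\max_{v}\tilde L_v$ with the $\tilde L_v$ conditionally i.i.d.\ copies of $L$, and conclude that the distribution of $L$ is degenerate. The paper works at generation $k=1$ and reads off the fixed-point equation $\P(L\le h)=\varphi(\P(L\le h))$ for the offspring generating function $\varphi$, whereas you iterate to generation $k$ and send $|\sT_k|\to\infty$; these are two phrasings of the same argument.

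Two points where your write-up is actually more careful than the paper's: you spell out \emph{why} $\tilde L_v$ has the law of $L$ despite the random base point $X_v$ (the shift-insensitivity of $\limsup(\cdot)^{1/n}$), which the paper asserts without justification; and you treat the case $\lambda=1$ separately via Kolmogorov's $0$--$1$ law, which is necessary since for $\nu=\delta_1$ the equation $p=\varphi(p)$ is vacuous and the paper's one-line conclusion does not literally apply there.
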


% Before the proof of Lemma~\ref{L:nonrandom}, we recall from~\cite{LP16} that a property of trees is said to be inherited if every finite tree has this property and if whenever a tree has this property, so do all the descendant trees of the children of the root. By~\cite[Proposition 5.6]{LP16}, under the law of Galton-Watson trees, every inherited property has probability either $0$ or $1$ given non-extinction.

% \begin{proof}[Proof of Lemma~\ref{L:nonrandom}]
%   For a fixed tree $T$, let $\P_T$ be the law of random walk $\{X_u,\, u \in T\}$ on $\Gamma$ indexed by $T$ with step distribution $\mu$. As before, let $\sP_n$ be the set of points $x \in \bS_n$ ever visited by $\{X_u,\, u \in T\}$ and denote $\sB_n = \bigcup_{k=0}^n \sP_k$. Then
%   \[
%     \theta_T := \limsup_{n \to \infty} M_n^{1/n} = \limsup_{n \to \infty} \left| \sB_n \right|^{1/n},
%   \]
%   where $M_n = |\sP_n|$. It follows that $\theta_T$ does not depend on the starting point of the random walk.

%   For every $h \geq 0$, consider the property that $\theta_T \leq h$ $\P_T$-a.s. It is clearly inherited. By~\cite[Proposition 5.6]{LP16}, the probability $\P \left( T:\ \theta_T \leq h \right)$ is either $0$ or $1$ given non-extinction. The lemma follows immediately.
% \end{proof}

\begin{proof}
  The proof is standard and might be known in the literature.  Here we spell out the details for readers' convenience.

  Let $X = (X_u, \, u \in \sT)$ be the BRW we are considering. For $u \in \sT_1$, let $\sT^{(u)}$ be the subtree of $\sT$ consisting of the children of $u$ and $X^{(u)} = \{X_{uv}, \, v \in \sT^{(u)}\}$. Conditional on $\{X_u,\, u \in \sT_1\}$, the processes $X^{(u)}$, $u \in \sT_1$ are independent BRWs starting respectively at $X_u$. Let $\eta$ and $\eta_u$, $u \in \sT_1$ be respectively the volume growth rate for the traces of the BRWs $X$ and $X^{(u)}$. Clearly, $\eta_u$ does not depend on the location of $X_u$ and conditioned on $\sT_1$, $\eta_u$ has the same distribution as that of $\eta$. Let $\varphi(s) = \sum_{k=1}^{\infty} s^k \nu(k)$ be the generating function of $\left| \sT_1 \right|$. Then for every $h \geq 0$, we have from the fact
\[
    \eta = \max_{u \in \sT_1} \eta_u,
  \]
  that
  \[
    \P(\eta \leq h) = \sum_{k=0}^\infty \P (|\sT_1| = k) \left[ \P(\eta \leq h) \right]^k
    = \varphi( \P(\eta \leq h) ).
  \]
  Therefore the probability $\P(\eta \leq h)$ is either $0$ or $1$ and $\eta$ is almost surely a constant.
\end{proof}

\subsection{Proof of Lemma~\ref{L:dxy2l}}

\begin{proof}[Proof of Lemma~\ref{L:dxy2l}]
By the uniform Ancona inequalities, it suffices to prove this lemma for $k = 2l$ with $0 \leq l \leq n$. For $x$, $y \in \bS_n$ with $d(x, \, y) = 2l$, let $w$ be the projection of $e$ to the geodesic $[x, \, y]$. Then from~\eqref{e:roughtree},
\begin{eqnarray}
\label{e:|w|}
n-l
=
\gp[e]{x}{y} \leq d(e, \, w)=\vert w\vert
\leq
\gp[e]{x}{y} + 4\delta
=
n-l+4\delta.
\end{eqnarray}
Using~\eqref{e:roughtree} again,
\[
  d(e,\,w) - 4 \delta \leq \gp[e]{x}{w} \leq d(e,\,w),
  \quad
  d(e,\,w) - 4 \delta \leq \gp[e]{y}{w} \leq d(e,\,w),
\]
which together with~\eqref{e:|w|} imply that
\begin{eqnarray}
\label{e:d(x,w)-d(y,w)}
  l - 8 \delta \leq d(x,\, w) \leq l + 8 \delta,
  \quad
  l - 8 \delta \leq d(y,\, w) \leq l + 8 \delta.
\end{eqnarray}

For $z \in \Gamma$,  choose $u := u(z)\in [e, \, w] \cup [w, \, x] \cup [w, \, y]$ so that
\[
d(z, \, u) = \min \{ d(z, \, [e, \, w]),\ d(z, \, [w, \, x]), \ d(z, \, [w, \, y]) \}.
\]
Let $\Omega_1(x, \, y)$, $\Omega_2(x, \, y)$ and $\Omega_3(x, \, y)$ be respectively the set of the points $z \in \Gamma$ such that $u(z)$ belongs to $[e, \, w]$, $[w, \, x]$ and $[w, \, y]$. See Figure~\ref{f:tree} for illustration of relative locations of $e$, $x$, $y$ and $z$.
\begin{figure}[t]
  \centering
  \begin{tikzpicture}
    % \draw (0,0) to [out=45,in=250] (1,2);
    % \draw (1,2) to [out=135,in=300] (0,3.5);
    % \draw (1,2) to [out=45,in=60] (2,3.5);
    \draw (0,0) coordinate (A) -- (0,2) coordinate (B) -- (1.5,3.5) coordinate (D);
    \draw (B) -- (-1.5,3.5) coordinate (C);
    \draw (0,0.7) coordinate (E) -- (-1.25,1.25) coordinate (F);
    \node [below right] at (A) {$e$};
    \node [right] at (B) {$w$};
    \node [right] at (D) {$y$};
    \node [left] at (C) {$x$};
    \node [left] at (F) {$z$};
    \node [right] at (E) {$u$};
    \filldraw (A) circle (1pt);
    \filldraw (B) circle (1pt);
    \filldraw (C) circle (1pt);
    \filldraw (D) circle (1pt);
    \filldraw (E) circle (1pt);
    \filldraw (F) circle (1pt);
  \end{tikzpicture}
  \qquad
    \begin{tikzpicture}
    % \draw (0,0) to [out=45,in=250] (1,2);
    % \draw (1,2) to [out=135,in=300] (0,3.5);
    % \draw (1,2) to [out=45,in=60] (2,3.5);
    \draw (0,0) coordinate (A) -- (0,2) coordinate (B) -- (1.5,3.5) coordinate (D);
    \draw (B) -- (-1.5,3.5) coordinate (C);
    \draw (-0.7,2.7) coordinate (E) -- (-1.75,2.5) coordinate (F);
    \node [below right] at (A) {$e$};
    \node [right] at (B) {$w$};
    \node [right] at (D) {$y$};
    \node [left] at (C) {$x$};
    \node [left] at (F) {$z$};
    \node [right] at (E) {$u$};
    \filldraw (A) circle (1pt);
    \filldraw (B) circle (1pt);
    \filldraw (C) circle (1pt);
    \filldraw (D) circle (1pt);
    \filldraw (E) circle (1pt);
    \filldraw (F) circle (1pt);
  \end{tikzpicture}
  % \begin{tikzpicture}
  %   % \draw (0,0) to [out=45,in=250] (1,2);
  %   % \draw (1,2) to [out=135,in=300] (0,3.5);
  %   % \draw (1,2) to [out=45,in=60] (2,3.5);
  %   \draw (0,0) coordinate (A) -- (0,2) coordinate (B) -- (1.5,3.5) coordinate (D);
  %   \draw (B) -- (-1.5,3.5) coordinate (C);
  %   \draw (0.7,2.7) coordinate (E) -- (1.75,2.5) coordinate (F);
  %   \node [below right] at (A) {$e$};
  %   \node [right] at (B) {$w$};
  %   \node [right] at (D) {$y$};
  %   \node [left] at (C) {$x$};
  %   \node [left] at (E) {$z$};
  %   \node [right] at (F) {$u$};
  %   \filldraw (A) circle (1pt);
  %   \filldraw (B) circle (1pt);
  %   \filldraw (C) circle (1pt);
  %   \filldraw (D) circle (1pt);
  %   \filldraw (E) circle (1pt);
  %   \filldraw (F) circle (1pt);
  % \end{tikzpicture}
  \caption{Configurations for relative locations of $e$, $x$, $y$ and $z$ in proof of Lemma~\ref{L:dxy2l}. Left: $z \in \Omega_1(x,\, y)$; Right: $z \in \Omega_2(x,\, y)$.}
  \label{f:tree}
\end{figure}
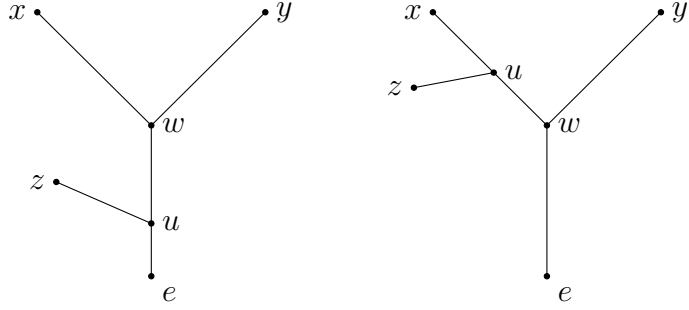

For $z \in \Omega_1(x, \, y)$, we have from~\eqref{e:roughtree} that $\gp[u]{z}{w} = d(z, \, u) - \gp[z]{u}{w} \leq 4 \delta$ and $\gp[w]{u}{x} = d(u, \, w) - \gp[u]{w}{x} \leq 4 \delta$. Combining these two inequalities,
\begin{equation}
  \label{e:uw}
  d(z, \, w) + d(u, \, x) \geq d(z, \, u) + d(w, \, x) + 2 d(u, \, w) - 16 \delta.
\end{equation}

\noindent{\bf Case 1. $d(u, \, w) > 7 \delta$.} By~\eqref{e:uw},
\[
  d(z, \, w) + d(u, \, x) > d(z, \, u) + d(w, \, x) + 2 \delta.
\]
This and~\eqref{e:4points} imply that
\[
d(z, \, w) + d(u, \, x) \leq  d(z, \, x) + d(u, \, w) + 2 \delta.
\]
Using~\eqref{e:uw} again, the above inequality may be rewritten as
 \begin{equation}
   \label{e:tree4}
    d(z, \, x) \geq d(z, \, u) + d(u, \, w) + d(w, \, x) - 18 \delta.
 \end{equation}
\noindent{\bf Case 2. $d(u, \, w) \leq 7 \delta$.} By~\eqref{e:roughtree},
\begin{gather*}
  d(z, \, u) \leq d(z, \, [w, \, x])
  \leq \gp[z]{w}{x} + 4 \delta
  = \frac{1}{2} \left[ d(z, \, x) + d(z, \, w) - d(w, \, x) \right] + 4 \delta, \\
\gp[w]{z}{u} \leq d(w,\, [z,\, u]) \leq d(u,\,w).
\end{gather*}
Hence,
\begin{equation}
  \label{e:tree5}
  \begin{aligned}
    d(z, \, x)
    \geq &
    2 d(z, \, u) + d(w, \, x) - d(z, \, w) - 8 \delta \\
    =&
    d(z,\,u) + d(u,\,w) + d(w, \, x) - 2 \gp[w]{z}{u} - 8 \delta \\
    \geq &
    d(z,\,u) + d(u,\,w) + d(w,\,x) - 2 d(u,\,w) - 8 \delta \\
    \geq &
    d(z,\,u) + d(u,\,w) + d(w,\,x) - 22 \delta.
  \end{aligned}
\end{equation}
From~\eqref{e:tree4} and~\eqref{e:tree5}, in both cases, $u$ and $w$ are close to the geodesic segment connecting $z$ and $x$.

By the uniform Ancona inequalities, there is a constant $c_1>0$ depending only on $\delta$ such that
 \[
 G_\lambda(z, \, x) \leq c_1 \, G_\lambda(z, \, u) G_\lambda(u, \, w) G_\lambda(w, \, x).
 \]
 By the same arguments,
 \[
 G_\lambda(z, \, y) \leq c_1 \, G_\lambda(z, \, u) G_\lambda(u, \, w) G_\lambda(w, \, y).
 \]
 Therefore, combining with~\eqref{e:|w|} and~\eqref{e:d(x,w)-d(y,w)}, we have that there are positive constants $c_2$ and $c_3$ depending only on $\delta$ such that
 \begin{equation}
        \label{e:Omega1}
        \begin{aligned}
                 & \sum_{\stackrel{x, \, y \in \bS_n}{d(x, \, y) = 2l}} \sum_{z\in \Omega_1(x, \, y)} G_\lambda(e, \, z) G_\lambda(z, \, x) G_\lambda(z, \, y) \\
                 \leq &
                 c_2 \sum_{|u| \leq n-l+2\delta} G_\lambda(e, \, u) \sum_{w \in \Gamma} G_\lambda(u, \, w)^2 \sum_{z \in \Gamma} G_\lambda(u, \, z)^2 \sum\limits_{\stackrel{l-2\delta\leq\vert w^{-1}x\vert\leq l+4\delta}{l-2\delta\leq\vert w^{-1}y\vert\leq l+4\delta}} G_\lambda(w, \, x) G_\lambda(w, \, y) \\
                 \leq &
                 c_3 H(\lambda)^{n+l},
        \end{aligned}
 \end{equation}
 where we have used~\eqref{e:Hr} and the fact that $\sum_{w \in \Gamma} G_{\lambda}(e, \, w)^2 < \infty$ for $\lambda < \rho^{-1}$ (c.f.~\cite[Corollary 3.3]{G14}) in the last inequality.

 Now assume $z \in \Omega_2(x, \, y) \cup \Omega_3(x, \, y)$. Without loss of generality, we only need to consider the case $z \in \Omega_2(x, \, y)$.  By the similar arguments, one can prove that $u$ and $w$ are close to the geodesic segments $[e, \, z]$ and $[y, \, z]$.
By~\eqref{e:|w|}--\eqref{e:d(x,w)-d(y,w)}, and the uniform Ancona inequalities, there are positive constants $c_4$ and $c_5$ depending only on $\delta$ such that
 \begin{equation}
   \label{e:Omega2}
   \begin{aligned}
    & \sum_{\stackrel{x, \, y \in \bS_n}{d(x, \, y) = 2l}} \sum_{z\in \Omega_2(x, \, y)} G_\lambda(e, \, z) G_\lambda(z, \, x) G_\lambda(z, \, y) \\
    \leq& 
     c_4 \sum_{n-l\leq \vert w\vert\leq n-l+2\delta} G_{\lambda}(e, \, w) \sum_{u \in \Gamma} G_{\lambda}(w, \, u)^2 \sum_{z \in \Gamma} G_{\lambda}(z, \, u)^2\\
    &\hskip 6cm  \sum_{|u^{-1}x| \leq l+4\delta} G_{\lambda}(u, \, x) \sum_{l-2\delta\leq\vert w^{-1}y\vert\leq l+4\delta} G_{\lambda}(w, \, y) \\
    \leq &
     c_5 H(\lambda)^{n+l}.
   \end{aligned}
 \end{equation}
 Combining~\eqref{e:Omega1} and~\eqref{e:Omega2}, we complete the proof of this lemma.
\end{proof}

\section{Hausdorff dimension of limit set}
\label{s:dim}

In this section, $\mu$ is an admissible, superexponential and symmetric probability on nonelementary hyperbolic group $\Gamma$, and $\nu$ is a probability on $\N$ with mean $\lambda \in [1,\, \rho^{-1}]$. Let $Y$ be the random walk on $\Gamma$ with step distribution $\mu$ and $\rho$ its spectral radius. 
Recall that $(X_u, \, u \in \sT)$ is the branching random walk on $\Gamma$ with offspring distribution $\nu$ and base motion $Y$. Fix a visual metric $d_a$ on $\partial \Gamma$ with parameter $a > 1$ and let $\Lambda$ be the limit set of $(X_u, \, u \in \sT)$ defined as the set of accumulation points on $\partial \Gamma$. 

\begin{thm}
  \label{T:Hausdorff}
  Assume $\lambda \in [1, \, \rho^{-1})$. Then the Hausdorff dimension of $(\Lambda,\, d_a)$ is, with probability $1$, equal to $h(\lambda) := \log_a H(\lambda)$.
\end{thm}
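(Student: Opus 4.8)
The plan is to prove the matching bounds $\dim_H(\Lambda)\le h(\lambda)$ and $\dim_H(\Lambda)\ge h(\lambda)$, in both directions combining the purely exponential growth $H_n(\lambda)\asymp H(\lambda)^n$ from Theorem~\ref{T:Hr} with the Sullivan shadows $\mho(x,\kappa)$, $x\in\sP$, as the basic covering and charging sets for $\Lambda$. Two elementary facts will be used throughout: by \eqref{e:visual} the shadow $\mho(x,\kappa)$ has $d_a$-diameter at most $C_\kappa\, a^{-|x|}$ (since $\gp[e]{\xi}{\xi'}\ge |x|-2\kappa-O(\delta)$ for $\xi,\xi'\in\mho(x,\kappa)$), and for each fixed $n$ the shadows $\{\mho(x,\kappa):|x|=n\}$ cover $\partial\Gamma$ with bounded multiplicity.

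For the upper bound the heart of the matter is a deterministic ``geodesic backbone'' for the subcritical BRW. Using the positive speed of the base walk ($\lambda<\rho^{-1}$), the many-to-one identity $\E[Z_x]=G_\lambda(e,x)$, the uniform Ancona inequalities (Theorem~\ref{T:ancona}), and the superexponential escape estimate of Lemma~\ref{L:gfoutside}, the probability that a fixed genealogical ray fails to progress linearly over its first $m$ steps, or passes at distance $\ge R$ from a geodesic toward its eventual position, is at most $\e^{-\epsilon m}$, respectively $\e^{-\epsilon R}$. As $|\sT_m|\le C\lambda^m$, a Borel--Cantelli argument over the tree then gives, almost surely, a deterministic $\kappa$ and a random finite level $N$ such that every genealogical ray $\gamma$ converges to some $X_\gamma\in\partial\Gamma$, the map $\gamma\mapsto X_\gamma$ is continuous, $\Lambda=\{X_\gamma:\gamma\text{ a genealogical ray}\}$ (the last identity by a compactness argument on the space of rays), and $X_\gamma\in\mho(X_{\gamma_k},\kappa)$ whenever $|X_{\gamma_k}|\ge N$. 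Given $\xi=X_\gamma\in\Lambda$ and $n\ge N$, the smallest $k$ with $|X_{\gamma_k}|\ge n$ places $\xi$ in the shadow of a visited point of spherical radius $\ge n$, so $\Lambda\subseteq\bigcup_{m\ge n}\bigcup_{x\in\sP_m}\mho(x,\kappa)$. For $s>h(\lambda)$, i.e.\ $a^s>H(\lambda)$, Theorem~\ref{T:growth} gives $\limsup_m M_m^{1/m}=H(\lambda)<a^s$, so $\sum_{m\ge n}\sum_{x\in\sP_m}(\diam\mho(x,\kappa))^s\le C_\kappa^s\sum_{m\ge n}M_m a^{-sm}\to0$ a.s.\ as $n\to\infty$; hence $\mathcal H^s(\Lambda)=0$ and $\dim_H(\Lambda)\le s$, and $s\downarrow h(\lambda)$ finishes this half.

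For the lower bound I would apply Frostman's lemma. Assume first $\sigma^2<\infty$, and let $\chi_n$ be the uniform probability measure on $\sP_n$, regarded as a measure on the compact space $\Gamma\cup\partial\Gamma$, with $\chi$ a weak subsequential limit conditional on the BRW; since all atoms of $\chi_n$ have $|x|=n\to\infty$, the limit $\chi$ charges no point of $\Gamma$, so $\chi$ is a probability measure on $\partial\Gamma$, and a short open-set argument shows $\supp\chi\subseteq\Lambda$. Fix $h<h(\lambda)$, so $a^h<H(\lambda)$. Using $d_a(x,y)\asymp a^{-\gp[e]{x}{y}}=a^{-(n-d(x,y)/2)}$ for $x,y\in\bS_n$ together with Lemmas~\ref{L:Zxy} and~\ref{L:dxy2l},
\[
  \E\Big[\sum_{x\neq y\in\sP_n}d_a(x,y)^{-h}\Big]\ \le\ C\sum_{k=1}^{2n}H(\lambda)^{n+k/2}\,a^{h(n-k/2)}\ \le\ C'\,(H(\lambda)a^h)^n\,(H(\lambda)/a^h)^n\ =\ C'\,H(\lambda)^{2n},
\]
the geometric sum in $k$ being dominated by its last term as $H(\lambda)/a^h>1$. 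Since $\E[M_n]\asymp H(\lambda)^n$ (Corollary~\ref{C:EMnlower}) and $\P(M_n\ge\tfrac12\E[M_n])\ge c>0$ by Paley--Zygmund and Corollary~\ref{C:varHn}, on that event the off-diagonal $h$-energy $M_n^{-2}\sum_{x\neq y\in\sP_n}d_a(x,y)^{-h}$ has expectation $O(1)$; working with the truncated kernels $\min(d_a^{-h},M)$ (whose diagonal contribution to $\iint\min(d_a^{-h},M)\,\rd\chi_n\,\rd\chi_n$ is $O(M/M_n)\to0$), one passes first $n\to\infty$ along a good subsequence with $\chi_n\to\chi$, then $M\to\infty$, to obtain $\iint_\Lambda d_a(\xi,\eta)^{-h}\,\rd\chi(\xi)\,\rd\chi(\eta)<\infty$ on a positive-probability event. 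Frostman's lemma then gives $\dim_H(\Lambda)\ge h$ there; since $\dim_H(\Lambda)$ is a.s.\ a constant---decomposing over first-generation subtrees as in Lemma~\ref{L:nonrandom}, $\P(\dim_H(\Lambda)\le t)$ is a fixed point of the offspring generating function, hence $0$ or $1$---it holds a.s., and letting $h\uparrow h(\lambda)$ through rationals yields $\dim_H(\Lambda)\ge h(\lambda)$ a.s. Finally, for a general offspring law $\nu$ one dominates it from below by $\nu'$ with mean $\lambda-\varepsilon\in[1,\rho^{-1})$ and finite variance; coupling, the limit set $\Lambda'$ of $\BRW(\Gamma,\nu',\mu)$ is contained in $\Lambda$, so $\dim_H(\Lambda)\ge\dim_H(\Lambda')\ge\log_a H(\lambda-\varepsilon)$, and $\varepsilon\downarrow0$ with the continuity of $H$ (Theorem~\ref{T:Hr}) completes the proof that $\dim_H(\Lambda)=h(\lambda)$ a.s.

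I expect the main obstacle to be the deterministic geodesic backbone underlying the upper bound: controlling \emph{all} genealogical rays simultaneously---uniform linear escape and uniformly bounded discrepancy from the geodesic to the limit point---so that fixed-level shadows form an honest cover of $\Lambda$. This is exactly where the superexponential tails of $\mu$ are indispensable: they make the relevant bad events decay faster than any exponential, so that a union bound survives the $\lambda^n$-fold branching of the Galton--Watson tree. By contrast, once the backbone and Lemma~\ref{L:dxy2l} are in hand, the energy estimate, the second-moment step, and the zero--one upgrade are routine.
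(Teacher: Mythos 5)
Your overall architecture---shadows for the upper bound, a second-moment/Frostman energy argument for the lower bound, the purely exponential growth of $H_n(\lambda)$ as the driving input in both directions, a zero--one upgrade via the offspring generating function, and stochastic domination to drop the $\sigma^2<\infty$ assumption---coincides with the paper's proof, and your lower bound is essentially identical to \S\ref{s:dim:low} (your energy estimate is the content of Lemma~\ref{L:unifWn}).

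The gap is in the upper bound, precisely where you flag the danger: the claim that there is a \emph{deterministic constant} $\kappa$ with $X_\gamma\in\mho(X_{\gamma_k},\kappa)$ eventually, for every genealogical ray $\gamma$ simultaneously, is not attainable from the estimates available. The probability that a particle at tree-level $k$ lies at distance $\ge R$ from the geodesic $[e,X_\gamma]$ to its own limit point decays at best exponentially in $R$; a union bound over the $\approx\lambda^k$ particles at level $k$, summed over $k$, then forces $R$ to grow with $k$, and indeed for a single random walk the discrepancy from the geodesic to the limit point typically grows like $\log k$ rather than staying bounded. Lemma~\ref{L:gfoutside} gives decay $K^{-n}$ for a ball of radius $n$, which only beats the $\e^{vn}$ choices when the ball radius grows linearly; this is why the paper's covering (Lemma~\ref{L:cover}) uses shadows $\mho(x,\varepsilon m)$ whose width grows with the level. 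Note also that the paper's covering argument does not actually control the discrepancy of the genealogical path from $[e,X_\gamma]$: instead it proves (via events $A_n$ and~\eqref{e:proboutside}) that a.s.\ the trace $\sP$ is $\varepsilon n$-dense along every geodesic segment $[e,z]$ with $z\in\sP_n$, and then for $\xi=X_\gamma$ it finds a visited point $y_n\in\sP$ near $[e,X_{u_n}]$ at radius $\approx\ell_0 n$ --- a point which need not lie on $\gamma$ at all --- and shows $\xi\in\mho(y_n,\,3C\ell_0^{-1}\varepsilon|y_n|)$. Your Hausdorff measure computation survives the repair unchanged: $\diam\mho(x,\varepsilon m)\le C a^{-(1-\varepsilon)m}$, and for $s>h(\lambda)$ one chooses $\varepsilon$ small so that $(1-\varepsilon)s\log a>\log H(\lambda)$, after which $\sum_{m\ge n}M_m a^{-(1-\varepsilon)sm}\to0$ a.s.\ exactly as you wrote.
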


To prove Theorem~\ref{T:Hausdorff}, we first study some properties of the limit set $\Lambda$ in \S\ref{s:limitset}, and then prove the upper bound $\dim_H(\Lambda) \leq h(\lambda)$ in \S\ref{s:dim:up} and the lower bound $\dim_H(\Lambda) \geq h(\lambda)$ in \S\ref{s:dim:low}.

\subsection{Limit set of BRW}
\label{s:limitset}

\begin{lemm}
  \label{L:speedup}
  There is a constant $C > 0$ such that, almost surely, we have $|X_u| \leq C |u|$ for $|u|$ large enough. 
\end{lemm}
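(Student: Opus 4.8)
The plan is to dominate $|X_u|$ by the word-length $Z_u := \sum_{j=1}^{|u|}\bigl|\eta_{u_j}\bigr|$ of the random-walk increments along the ancestral line of $u$, and then to show, by a first-moment bound over generation $n$, that $Z_u$ grows at most linearly in $n$ simultaneously for all $u \in \sT_n$ once $n$ is large. Since $X_u = X_\emptyset\prod_{j=1}^{|u|}\eta_{u_j}$ with $X_\emptyset$ deterministic, the triangle inequality for the word metric gives $|X_u| \le |X_\emptyset| + Z_u$, so it suffices to find $C > 0$ with $Z_u \le C|u|$ for every $u$ with $|u|$ large enough.

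First I would invoke the many-to-one formula: for any Borel set $A \subseteq \Gamma^n$,
\[
  \E\Bigl[\#\bigl\{u \in \sT_n:\ (\eta_{u_1},\ldots,\eta_{u_n}) \in A\bigr\}\Bigr] = \lambda^n\,\mu^{\otimes n}(A).
\]
Taking $A = \{(g_1,\ldots,g_n):\ |g_1| + \cdots + |g_n| \ge Cn\}$ and writing $\widetilde Z_n = |\xi_1| + \cdots + |\xi_n|$ for $\xi_1,\xi_2,\ldots$ i.i.d.\ with law $\mu$, this becomes
\[
  \E\Bigl[\#\bigl\{u \in \sT_n:\ Z_u \ge Cn\bigr\}\Bigr] = \lambda^n\,\P\bigl(\widetilde Z_n \ge Cn\bigr).
\]
Because $\mu$ is superexponential, $\beta := \log\sum_{x \in \Gamma}\e^{|x|}\mu(x) < \infty$, so the exponential Markov inequality yields $\P(\widetilde Z_n \ge Cn) \le \e^{-Cn}\,\E[\e^{\widetilde Z_n}] = \e^{-(C - \beta)n}$ (this is the same large-deviation input used for~\eqref{e:ldpRW}). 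Combining the last display with Markov's inequality for the counting variable,
\[
  \P\bigl(\exists\, u \in \sT_n:\ Z_u \ge Cn\bigr) \le \lambda^n\,\e^{-(C-\beta)n} = \e^{-(C - \beta - \log\lambda)n}.
\]

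Now I would fix $C$ large enough that $C - \beta - \log\lambda \ge 1$; then these probabilities are summable in $n$, so by the Borel--Cantelli lemma almost surely there is a finite (random) $N$ such that $Z_u < Cn$ for every $n \ge N$ and every $u \in \sT_n$. Consequently $|X_u| \le |X_\emptyset| + Z_u \le (C + 1)|u|$ for all $u$ with $|u| \ge \max\{N, |X_\emptyset|\}$, which is the assertion of the lemma.

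The argument has no genuine obstacle: it is a plain union bound over the $n$-th generation, made affordable because the expected population size is only $\lambda^n$ while a single random-walk path of length $n$ leaves the ball of radius $Cn$ with probability exponentially small in $Cn$ — exactly what the superexponential hypothesis on $\mu$ guarantees. The one point to watch is keeping the exponent $C - \beta - \log\lambda$ strictly positive (and summable), which merely forces $C$ to be chosen large; the resulting linear bound is far from the true linear escape rate of the branching random walk, but it is all that is needed in the sequel.
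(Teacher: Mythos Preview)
Your proof is correct. Both you and the paper dominate $|X_u|$ by the Birkhoff sum $Z_u = \sum_{j=1}^{|u|}|\eta_{u_j}|$ of increment word-lengths along the ancestral line (the paper writes this as $V_u$). The difference lies in how you extract the linear bound: the paper observes that $(V_u)_{u\in\sT}$ is a one-dimensional branching random walk and invokes the Kingman--Biggins first/last-birth theorem to conclude that $\frac{1}{n}\max_{|u|=n}V_u$ converges a.s.\ to a finite constant; you instead run a direct first-moment argument (many-to-one plus exponential Chebyshev plus Borel--Cantelli). Your route is more elementary and self-contained, requiring nothing beyond the superexponential tails of $\mu$, while the paper's route yields the sharper conclusion that the linear rate actually converges --- a refinement that is not needed here, since only a crude upper bound is used later.
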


\begin{proof}
  For $u \in \sT$ with $|u| = n$, let $\langle {\emptyset = u(0),\, u(1), \ldots,\, u(n) = u} \rangle$ be the geodesic line on $\sT$ connecting $\emptyset$ and $u$. Define $V_u = |X_{u(0)}| + |X_{u(0)}^{-1} X_{u(1)}| + \cdots + |X_{u(n-1)}^{-1} X_{u(n)}|$. Then $(V_u, \, u \in \sT)$ is a branching random walk on the real line. It is well known that the limit $\lim_{n \to \infty} \frac{1}{n} \max_{|u| = n} V_u$ exists and is almost surely a finite constant; see for example Kingman~\cite{K75} and Biggins~\cite{B76}. The lemma follows immediately. 
\end{proof}

Recall that $v$ is the volume entropy of $\Gamma$ defined in~\eqref{e:vrate1}.
\begin{lemm}
  \label{L:speedsub}
  Suppose $\lambda \in [1, \, \rho^{-1})$. Then for any $0 < \ell < -v^{-1} \log (\lambda \rho)$, almost surely there are only finitely many $u \in \sT$ such that $\left| X_u \right| < \ell |u|$. In particular, $\liminf_{n \to \infty} \frac{1}{n} \inf_{|u| = n} \left| X_u \right| \geq - v^{-1} \log (\lambda \rho)$ a.s. 
  % Let $0 < \ell < - v^{-1} \log (\lambda \rho).$ Then almost surely there is $n_0 := n_0(\omega)\in\N$ such that for every $u \in \sT$ with $|u| \geq n_0$, $\left| X_u \right| \geq \ell |u|$.
\end{lemm}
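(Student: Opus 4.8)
The plan is to estimate the expected number of ``slow'' vertices by a first-moment computation and then apply Borel--Cantelli. Fix $\ell$ with $0 < \ell < -v^{-1}\log(\lambda\rho)$; this range is nonempty since $\lambda < \rho^{-1}$ forces $\lambda\rho < 1$, while $v > 0$ because $\Gamma$ is nonelementary. Summing over generations (Tonelli) and using the many-to-one formula,
\[
\E\Bigl[\,\#\{u \in \sT : |X_u| < \ell\,|u|\}\,\Bigr]
= \sum_{n=0}^{\infty} \lambda^n\,\P\bigl(|Y_n| < \ell n\bigr),
\]
where $Y$ is the base random walk. I would then bound each probability by summing the heat kernel over a ball and using $p_n(e,x) \le \rho^n$ (recalled in the proof of Corollary~\ref{C:EMnlower}):
\[
\P\bigl(|Y_n| < \ell n\bigr) = \sum_{|x| < \ell n} p_n(e,x) \le \rho^n \, \bigl|B(e,\lfloor\ell n\rfloor)\bigr|.
\]
The purely exponential volume growth~\eqref{e:vgrowth} gives $|B(e,\lfloor\ell n\rfloor)| \le C\,\e^{v\ell n}$, so the $n$-th summand above is at most $C\,(\lambda\rho\,\e^{v\ell})^n$.

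The point of the constraint $\ell < -v^{-1}\log(\lambda\rho)$ is precisely that it makes $\lambda\rho\,\e^{v\ell} < 1$; hence the geometric series converges and the displayed expectation is finite. Consequently $\{u\in\sT : |X_u| < \ell\,|u|\}$ is almost surely finite, which is the first assertion.

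For the ``in particular'' statement, I would argue that on the full-probability event that this set is finite, there is a random $N$ with $|X_u| \ge \ell\,|u|$ for all $u$ with $|u| \ge N$; since $\nu(0)=0$, every generation $\sT_n$ is nonempty, so $\inf_{|u|=n}|X_u| \ge \ell n$ for $n \ge N$ and therefore $\liminf_{n\to\infty} n^{-1}\inf_{|u|=n}|X_u| \ge \ell$ almost surely. Taking a countable sequence $\ell_k \uparrow -v^{-1}\log(\lambda\rho)$ and intersecting the corresponding almost-sure events yields the claimed bound. I do not expect any genuine obstacle here; the only mild care needed is in applying the volume estimate at the integer radius $\lfloor \ell n\rfloor$ (which costs only a constant factor), and in observing that the heat-kernel bound $p_n \le \rho^n$ is exactly the input that converts the subcriticality $\lambda\rho < 1$ into summability of $\lambda^n$ against the ball volumes $\e^{v\ell n}$.
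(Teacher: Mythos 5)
Your proof is correct and follows essentially the same route as the paper: a first-moment computation via the many-to-one formula, the pointwise heat-kernel bound $p_n(e,x)\le\rho^n$, and the purely exponential volume growth~\eqref{e:vgrowth} to make the resulting geometric series converge. The paper's proof is just terser and leaves the Borel--Cantelli step and the passage to the ``in particular'' statement (intersecting over $\ell_k\uparrow -v^{-1}\log(\lambda\rho)$) implicit, both of which you spell out correctly.
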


\begin{proof}
  Note that for all $x \in \Gamma$, $p_n(e, \, x) \leq \rho^n$.  We have from~\eqref{e:vgrowth} that
  \begin{align*}
    \E \left[ \sum_{u \in \sT} \id_{\{|X_u| < \ell u\}} \right]
    =
    \sum_{n=0}^\infty \E \left[ \sum_{u \in \sT_n} \id_{\{|X_u| < \ell u\}} \right]
    \leq
    \sum_{n=0}^\infty \lambda^n \sum_{|x| < \ell n} p_n(e, \, x)
    \leq
    \sum_{n=0}^\infty c_1 \lambda^n \rho^n \e^{v \ell n}
    <
    \infty. 
  \end{align*}
  This completes the proof of this lemma. 
  % \begin{align*}
  %   \P \left( \exists \, u \in \sT_n:\ |X_u| < \ell n \right)
  %   \leq
  %   \lambda^n \sum_{\vert x\vert<\ell n} p_n(e,\, x)
  %   \leq
  %   c_1 \lambda^n \rho^n \e^{v \ell n}
  %   \leq
  %   c_1 \e^{n (v \ell + \log (\lambda \rho))}.
  % \end{align*}
  % Thus
  % \[
  %   \sum_{n=1}^{\infty} \P \left( \exists \, u \in \sT_n:\ |X_u| < \ell n \right) < \infty.
  % \]
  % The lemma follows from the Borel-Cantelli Lemma.
\end{proof}

\begin{remark}
  Since for some constant $c>0$, 
  \[
    1 = \sum_{\vert y\vert\leq n} p_n(e, \, y) \leq c \e^{n (v + \log \rho)},
  \]
  we have that
  \[
    \ell < - v^{-1} \log (\lambda \rho) < - v^{-1} \log \rho \le 1
  \]
  for $1<\lambda < \rho^{-1}$.
\end{remark}

\begin{lemm}
  \label{L:bigjump}
  Assume that $\lambda \in [1, \, \rho^{-1}]$. Then for any $\varepsilon > 0$, almost surely there are only finitely many $u \in \sT$ such that $d(X_{u-}, \, X_u) > \varepsilon |X_{u-}|$.  % almost surely there is $n_1 := n_1(\omega)$ such that for every $u \in \sT$ with $|u| \geq n_1$, $d(X_{u-}, \, X_u) \leq \varepsilon |X_{u-}|$. 
\end{lemm}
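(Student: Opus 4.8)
The plan is to bound the \emph{expected} number of such vertices; a nonnegative integer-valued sum with finite expectation is almost surely finite, so no conditional Borel--Cantelli argument is needed. The only place hyperbolicity enters is through Theorem~\ref{T:Hr}, and it is exactly that input which makes the estimate uniform up to and including the critical value $\lambda=\rho^{-1}$.

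First I would fix $\varepsilon>0$ and record the tail bound coming from superexponentiality: $\Phi(s):=\sum_{x\in\Gamma}\e^{s|x|}\mu(x)<\infty$ for every $s\ge 0$, whence by Chebyshev's inequality $\mu(\{y:|y|>t\})\le\Phi(s)\,\e^{-st}$ for all $t>0$. Choose $s$ with $s\varepsilon>v/2$. Then I would group the vertices $u\in\sT\setminus\{\emptyset\}$ by generation; for $u\in\sT_n$ the event in question depends only on the pair $(X_{u-},X_u)$, the increment $X_{u-}^{-1}X_u$ has law $\mu$ and is independent of $X_{u-}$, so the many-to-one formula applied at generation $n$ to $(a,b)\mapsto\id_{\{d(a,b)>\varepsilon|a|\}}$ gives
\[
  \E\Bigl[\sum_{u\in\sT\setminus\{\emptyset\}}\id_{\{d(X_{u-},X_u)>\varepsilon|X_{u-}|\}}\Bigr]
  =\sum_{n\ge 1}\lambda^{n}\sum_{x\in\Gamma}p_{n-1}(e,x)\,\mu\bigl(\{y:|y|>\varepsilon|x|\}\bigr).
\]
Bounding the inner measure by $\Phi(s)\e^{-s\varepsilon|x|}$, reindexing $m=n-1$, and summing the geometric series in $m$ to form the Green function, the right-hand side is at most
\[
  \Phi(s)\,\lambda\sum_{x\in\Gamma}\e^{-s\varepsilon|x|}G_\lambda(e,x)
  =\Phi(s)\,\lambda\sum_{n\ge 0}\e^{-s\varepsilon n}H_n(\lambda).
\]

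It then remains to see the last series converges, and this is where Theorem~\ref{T:Hr} does the work: $H_n(\lambda)\le C\,H(\lambda)^{n}$ and $H(\lambda)\le\e^{v/2}$, so the series is dominated by $C\sum_{n\ge 0}\e^{(v/2-s\varepsilon)n}<\infty$ since $s\varepsilon>v/2$. Hence the displayed expectation is finite and the conclusion follows. The only delicate point—indeed the reason Theorem~\ref{T:Hr} rather than the crude volume bound~\eqref{e:vgrowth} is required—is the critical case $\lambda=\rho^{-1}$: there $G_{\rho^{-1}}(e,x)$ is no longer uniformly bounded (the naive estimate $G_\lambda(e,x)\le(1-\lambda\rho)^{-1}$ degenerates), and one genuinely needs the purely exponential growth $H_n(\rho^{-1})\le C\,H(\rho^{-1})^{n}$ together with the sharp inequality $H(\rho^{-1})\le\e^{v/2}$ to keep the sum summable for a suitable $s$. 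For $\lambda<\rho^{-1}$ the same argument goes through using only $G_\lambda(e,x)\le(1-\lambda\rho)^{-1}$ and $|\bS_n|\le C\e^{vn}$, at the cost of requiring $s\varepsilon>v$ instead.
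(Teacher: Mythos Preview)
The proposal is correct and follows essentially the same strategy as the paper: bound the expected total number of big jumps using the superexponential tail of $\mu$ together with the purely exponential growth $H_n(\lambda)\le C\,H(\lambda)^n$ from Theorem~\ref{T:Hr}. Your version is slightly more explicit in invoking the many-to-one formula and summing directly to form $H_n(\lambda)$, whereas the paper routes the same computation through $\E[M_k]\le H_k(\lambda)$; your choice $s\varepsilon>v/2$ (via $H(\lambda)\le\e^{v/2}$) is a harmless variant of the paper's condition $s_0\varepsilon>\log H(\lambda)$.
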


\begin{proof}
  Let $\eta$ be a random variable on $\Gamma$ distributed as $\mu$. Since $\mu$ is superexponential, $c_0 := \E \left[ \e^{s_0 |\eta|} \right] < \infty$. Let $\sG$ be the $\sigma$-algebra generated by $\sT$.  Choose $s_0$ so that $s_0 \varepsilon > \log H(\lambda)$, where $H(\lambda)$ is defined in~\eqref{e:Hlambda}. Then we have for any $u \in \sT$,
  \[
    \P \left( d \left( X_{u-}, \, X_u \right) > \varepsilon \left| X_{u-} \right| \,\big|\, \sG \right)
    =
    \P \left( \left| \eta \right| > \varepsilon \left| X_{u-} \right| \,\big|\, \sG \right)
    \leq
    c_0 \E \left[ \e^{-s_0 \varepsilon \left| X_{u-} \right|} \,\big|\, \sG \right].
  \]
  As a consequence, 
  \begin{align*}
    \E \left[ \sum_{u \in \sT} \id_{\{d \left( X_{u-}, \, X_u \right) > \varepsilon |X_{u-}|\}} \right] 
    \leq 
           c_0 \E \left[ \sum_{u \in \sT} \E \left[ \e^{- s_0 \varepsilon |X_{u-}|} \,\big|\, \sG \right] \right] 
    % \leq &
    %        c_0 \E \left[ \sum_{k=0}^\infty \sum_{u \in \sT} \id_{\{ \left| X_{u-} \right| = k\}} \e^{-s_0 \varepsilon k} \right] \\
    \leq  c_0 \sum_{k=0}^\infty \lambda \e^{-s_0 \varepsilon k} \E[M_k], 
  \end{align*}
  where $M_k$ is the number of the vertices on the sphere of radius $n$ that are ever visited by the BRW. By Theorem~\ref{T:Hr} and~\eqref{e:upper}, $\E[M_k] \leq c_1 H(\lambda)^k$ for some $c_1 > 0$. Therefore we have that $\E \left[ \sum_{u \in \sT} \id_{\{d \left( X_{u-}, \, X_u \right) > \varepsilon |X_{u-}|\}} \right] < \infty$ and the lemma follows.  
\end{proof}

% \begin{lemm}
%   \label{L:bigjump}
% Assume $\lambda \in [1, \, \rho^{-1}].$ Then for every $\varepsilon > 0$, almost surely there is $n_1 = n_1(\omega)\in\N $ such that for every $u\in \sT$ with $|u| \geq n_1,$ $d(X_{u^-},\, X_u) \leq \varepsilon |u|$.
% \end{lemm}

% \begin{proof}
%   Let $\eta$ be a $\Gamma$-valued random variable with distribution $\mu$. Choose $s_0$ so that $s_0 \varepsilon > \log \lambda$. Since $\mu$ is superexponential, we have that $\E \left[ \e^{s_0 |\eta|} \right] < \infty$. Therefore
%   \[
%     \P \left( \exists \, u \in \sT_n :\ d(X_{u^-}, \, X_u) > \varepsilon n \right)
%     \leq
%     \lambda^n \P \left( |\eta| > \varepsilon n \right)
%     \leq
%     \e^{-s_0 \varepsilon n + n \log \lambda} \E \left[ \e^{s_0 |\eta|} \right].
%   \]
%   This and the Borel-Cantelli Lemma imply this lemma.
% \end{proof}

For $u$ and $w$ in $\sT \cup \partial \sT$, let $d_\sT(u, \, w) = 2^{- \left| u \wedge w \right|}$. Then $d_\sT$ defines a metric on $\sT \cup \partial \sT$. % and $\sT \cup \partial \sT$ is the completion of $\sT$ under this metric.

\begin{prop}
  \label{P:holder}
Suppose $\lambda \in [1, \, \rho^{-1})$. Fix $0 < \ell_0 < - v^{-1} \log (\lambda \rho)$ and let $\alpha = \ell_0 \log_2 a$. Then, almost surely, $(\sT,\, d_{\sT}) \ni u\mapsto X_u\in (\Gamma, \, d_a)$
defines a H\"older continuous map of index $\alpha$, and this map can be extended to a H\"older continuous map of index $\alpha$ from $(\sT \cup \partial \sT, \, d_{\sT})$ to $(\Gamma \cup \partial \Gamma,\ , d_a)$.
\end{prop}

\begin{proof}
  Let $\ell_0 < \ell < - v^{-1} \log (\lambda \rho)$. By Lemmas~\ref{L:speedsub} and~\ref{L:bigjump}, almost surely there is $n_0 := n_0(\omega)$ such that for all $u \in \sT$ with $|u| \geq n_0$, we have that $|X_u| \geq \ell |u|$ and $d(X_{u-},\, X_u) \leq (\ell - \ell_0) |u|$. Let us fix such an $\omega$ in the rest of the proof. Denote by $\langle {\emptyset = u(0),\, u(1),\, \ldots,\, u(n) = u} \rangle$ the geodesic line on $\sT$ connecting $\emptyset$ and $u$. Then we have for $j \geq n_0$,
  \[
    \gp[e]{X_{u(j)}}{X_{u(j+1)}}
    \geq
    \frac{1}{2} \left[ \ell j + \ell (j+1) - (\ell - \ell_0) j \right]
    \geq
    \ell_0 j.
  \]
  Therefore, for some positive constants $c_i$, $1 \leq i \leq 3$, 
  \[
    d_a\left( X_{u(j)}, \, X_{u(j+1)} \right) \leq c_1 a^{-\gp[e]{X_{u(j)}}{X_{u(j+1)}}} \leq c_2 a^{-\ell_0 j},
  \]
  and consequently, 
  \[
    d_a(X_u, \, X_{u(j)}) \leq \sum_{i=j}^{|u| - 1} d_a(X_{u(i)}, \, X_{u(i+1)}) \leq c_3 a^{-\ell_0 j}.
  \]
  Now for $w \in \sT$ with $|u \wedge w| \geq n_0$, we have that
  \[
    d_a(X_u, \, X_w) \leq d_a(X_u, \, X_{u \wedge w}) + d_a(X_w, \, X_{u \wedge w}) \leq  2 c_3 a^{-\ell_0 |u \wedge w|} = 2 c_3 \left( d_\sT(u, \, w) \right)^\alpha
  \]
  with $\alpha = \ell_0 \log_2 a$. That is, the map $(\sT,\, d_{\sT}) \ni u \mapsto X_u\in (\Gamma, \, d_a)$ is H\"older continuous of index $\alpha$.

  For any $\gamma \in \partial \sT$ and sequence $(u_n)$ in $\sT$ with $u_n \to \gamma$, we have from the H\"older continuity that $\{X_{u_n}\}$ is a Cauchy sequence in $(\Gamma, \, d_a)$ and thus converges to some limit point $\xi \in \partial \Gamma$. Define $X_\gamma = \xi$. If $(w_n)$ is another sequence in $\sT$ such that $w_n \to \gamma' \in \partial \sT$, then
  \[
    d_a \left( X_\gamma, \, X_{\gamma'} \right)
    =
    \lim_{n \to \infty} d_a \left( X_{u_n}, \, X_{w_n} \right)
    \leq
    2 c_3 d_{\sT}(\gamma, \, \gamma').
  \]
  This implies that the map is well-defined and is H\"older continuous of index $\alpha$ on $\sT \cup \partial \sT$. 
\end{proof}

% Combining Lemma~\ref{L:conv} and~\ref{L:speedsub}, we have
% \begin{cor}
%   \label{C:convsub}
%   If $\lambda < \rho^{-1}$, then almost surely we have for every $\gamma \in \partial \sT$, the limit $X_\gamma := \lim_{n \to \infty} X_{\gamma_n} \in \partial \Gamma$ exists.
% \end{cor}

%Let $\Lambda$ be the limit set of the branching random walk, that is, the set of accumulation points of $\sP$ in $\Gamma \cup \partial \Gamma$.
% \[
%   \Lambda = \left\{ \xi \in \partial \Gamma:\ \exists \, y_n \in \Gamma \text{ s.t. } y_n \to \xi \text{ and } y_n \text{ is visited by } \BRW \right\}.
% \]
\begin{cor}
  \label{C:Lambda}
  For $1\leq \lambda < \rho^{-1}$,
  \[
    \Lambda = \left\{ X_\gamma:\ \gamma \in \partial \sT \right\} \quad \text{a.s}.
  \]
\end{cor}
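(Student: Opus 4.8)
The plan is to establish the two inclusions $\{X_\gamma:\ \gamma\in\partial\sT\}\subseteq\Lambda$ and $\Lambda\subseteq\{X_\gamma:\ \gamma\in\partial\sT\}$ separately, working throughout on the almost sure event on which the conclusions of Lemmas~\ref{L:speedup} and~\ref{L:speedsub}, the conclusion of Proposition~\ref{P:holder}, and local finiteness of the Galton--Watson tree $\sT$ all hold simultaneously. Local finiteness is automatic here: since $\nu$ is a probability measure on $\N$, every vertex of $\sT$ almost surely has finitely many children, whence every generation $\sT_n$ is finite. Since $\nu(0)=0$, the tree $\sT$ is infinite, so $\partial\sT\neq\emptyset$, and the map $\gamma\mapsto X_\gamma$ from Proposition~\ref{P:holder} is well defined on $\partial\sT$.

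For the inclusion $\{X_\gamma:\ \gamma\in\partial\sT\}\subseteq\Lambda$, I would fix $\gamma\in\partial\sT$ and denote by $\gamma(n)\in\sT_n$ its ancestor in generation $n$. By Lemma~\ref{L:speedsub}, $|X_{\gamma(n)}|\to\infty$, so the limit $X_\gamma=\lim_n X_{\gamma(n)}$ supplied by Proposition~\ref{P:holder} lies in $\partial\Gamma$; since each $X_{\gamma(n)}$ belongs to $\sP$ and these points are eventually all distinct, $X_\gamma$ is an accumulation point of $\sP$, i.e.\ $X_\gamma\in\Lambda$.

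For the reverse inclusion, I would take $\xi\in\Lambda$ together with a sequence $x_n\in\sP$ with $x_n\to\xi$ in $(\Gamma\cup\partial\Gamma,\,d_a)$; because $\xi\in\partial\Gamma$ this forces $|x_n|\to\infty$. Write $x_n=X_{u_n}$ for some $u_n\in\sT$. By Lemma~\ref{L:speedup}, $|u_n|\geq|X_{u_n}|/C=|x_n|/C\to\infty$ for $n$ large. Since every generation $\sT_n$ is finite, a pigeonhole argument along the generations produces a ray $\gamma\in\partial\sT$ through which infinitely many of the $u_n$ pass at every level; extracting a subsequence $(u_{n_k})$ with $|u_{n_k}\wedge\gamma|\to\infty$, i.e.\ $d_\sT(u_{n_k},\,\gamma)\to 0$, and applying the continuity of $u\mapsto X_u$ on $\sT\cup\partial\sT$ from Proposition~\ref{P:holder}, we get $X_{u_{n_k}}\to X_\gamma$. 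On the other hand $X_{u_{n_k}}=x_{n_k}\to\xi$, so by uniqueness of limits in the compact metric space $(\Gamma\cup\partial\Gamma,\,d_a)$ we conclude $\xi=X_\gamma$.

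The only point requiring a little care is the extraction of the limiting ray $\gamma$ in the reverse inclusion: one must first know that each generation of $\sT$ is finite before running the pigeonhole/diagonal argument, and this is precisely where the hypothesis that $\nu$ charges $\N$ (rather than a mere uniform local-finiteness assumption) enters. Beyond that, the corollary is a routine compactness-and-continuity consequence of Lemmas~\ref{L:speedup}--\ref{L:speedsub} and Proposition~\ref{P:holder}, so I do not anticipate any genuine obstacle.
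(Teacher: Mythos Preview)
Your proposal is correct and follows essentially the same route as the paper's own proof: fix a full-measure event on which Proposition~\ref{P:holder} applies, and use continuity of $u\mapsto X_u$ on $\sT\cup\partial\sT$ together with compactness to identify any $\xi\in\Lambda$ as some $X_\gamma$. The paper's argument is terser---it only spells out the inclusion $\Lambda\subseteq\{X_\gamma:\gamma\in\partial\sT\}$ and simply asserts the existence of a subsequential limit $\gamma\in\partial\sT$ (relying implicitly on compactness of $\sT\cup\partial\sT$ and the fact that a limit in $\sT$ would force the $y_n$ to be eventually constant in $\Gamma$, contradicting $y_n\to\xi\in\partial\Gamma$)---whereas you make both inclusions explicit and invoke Lemma~\ref{L:speedup} to force $|u_n|\to\infty$ before running the pigeonhole extraction; this is a perfectly valid alternative justification of the same step.
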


\begin{proof}
  By Proposition~\ref{P:holder}, we have for almost all $\omega$ that the map $u \mapsto X_u$ is continuous on $\sT \cup \partial \sT$. Let us fix such an $\omega$. For any $\xi \in \Lambda$, there exists a sequence $(y_n) \subset \sP$ such that $y_n \to \xi$. Choose $u_n \in \sT$ so that $X_{u_n} = y_n$. Then there are $\gamma \in \partial \sT$ and a subsequence $n(k)$ such that $u_{n(k)}$ converges to $\gamma$. The continuity implies that $\xi = X_\gamma$. 
\end{proof}

\begin{remark}
  \label{R:critical-speed}
  Consider the critical case $\lambda = \rho^{-1}$. For $x \in \Gamma$, there is a path from $e$ to $x$ whose probability is bound from below by $c_1^{-|x|}$, and staying close to a geodesic segment from $e$ to $x$. We deduce that $p_n(e, \, x) \leq c_1^{|x|} p_n(e, \, e) \leq c_2 c_1^{|x|} \rho^n n^{-3/2}$ for $n$ large. Using this fact one can slightly modify the arguments of Lemma~\ref{L:speedsub} to prove that, almost surely there is $n_0 := n_0(\omega)$ such that for every $u \in \sT$ with $|u| \geq n_0$, $|X_u| \geq c_3 \log |u|$.

  If the constant $c_3$ can be chosen so that $c_3 \log a > 1$, then one can check, using the same idea as that of Proposition~\ref{P:holder}, that the map $u \mapsto X_u$ is continuously extended to the boundary $\partial \sT$. As a consequence, we still have that $\Lambda = \{X_\gamma:\ \gamma \in \sT\}$ a.s.

  In particular, if $\Gamma$ is virtually free, that is, if $\Gamma$ has a free subgroup of finite index, then the visual parameter $a$ can be chosen arbitrarily large so that $c_3 \log a > 1$. In this case, we have that $\Lambda = \{X_\gamma:\ \gamma \in \sT\}$ a.s. 
\end{remark}

\subsection{Proof for upper bound}
\label{s:dim:up}

Recall that $\sP$ is set of points in $\Gamma$ that are ever visited by particles in ${\rm BRW}(\Gamma,\, \nu, \, \mu)$ and $\sP_n$ the collection of the points $x \in \sP$ with $|x| = n$. To show that the Hausdorff dimension of the limit set $\Lambda$ is at most $h(\lambda)$, it suffices to exhibit, for each $h > h(\lambda)$ and $n \in\N$, a covering $\left\{ J_{nk}, \, k \geq 1 \right\}$ of $\Lambda$, such that $\sum_k  \left( \diam J_{nk} \right)^{h}$ converges to $0$ as $n \to \infty$. Here $\diam A$ is the diameter of a subset $A$ of $(\partial\Gamma, \, d_a)$. In the following lemma we use shadows $\mho(x,\, \kappa)$ cast by $x \in \sP$ with suitable parameters $\kappa$ to construct the coverings of $\Lambda$. 
% Here $\diam A$ is the diameter of the subset $A$ of the metric space $(\partial \Gamma,\, d_a)$.
% We will use the \emph{superposition property}~\cite[\S2]{HL00} of BRW to construct such coverings.  Let the process begin with a standard BRW started at $e$, but make the sphere $\bS_n$ an absorbing barrier. Thus any particle entering the set $\bS_n$ will be frozen, i.e., undergoes no subsequent fission and makes no further movements. Let $\sP_n$ denote the point process consisting of the locations of all frozen particles. To each frozen particle $g \in \sP_n$ attach an independent BRW with initial particle located at $g$. By the superposition property, this construction does yield a version of original BRW; see~\cite[\S2]{HL00} for more details. For $g \in \sP_n$, let $\Lambda(g)$ be the limit set of the BRW attached at $g$. Then $g^{-1} \Lambda(g)$ has the same distribution as that of $\Lambda$ and we have that $\Lambda = \bigcup_{g \in \sP_n} \Lambda(g)$.

% \begin{proof}[Proof of Theorem~\ref{T:Hausdorff}: upper bound]
%   For $g \in \sP_n$ and $m > 0$ let $\Lambda(g,\, m) := \Lambda(g) \cap \mho_{g^{-1}}(e,\, m)$. Then we have
% \end{proof}

% Recall that the shadow $\mho(x,\, \kappa)$ for $x \in \Gamma$ and $\kappa \geq 0$ is defined by
% \[
%   \mho(x, \, \kappa) := \left\{ \xi \in \partial \Gamma:\ \gp[e]{x}{\xi} \geq |x| - \kappa \right\}.
% \]
% Intuitively, $\mho(x, \, \kappa)$ is the set of points $\xi$ on the boundary $\partial \Gamma$ such that every geodesic ray issuing from $e$ and converging to $\xi$ passes through the ball of radius $\kappa$ centered at $x$.

\begin{lemm}
  \label{L:cover}
 Assume $\lambda\in [1, \, \rho^{-1})$. For every $\varepsilon > 0$ and $n \in \N$, 
  \begin{equation}
    \label{e:coverLambda}
    \Lambda \subseteq \bigcup_{m = n}^\infty \bigcup_{x \in \sP_m} \mho (x, \, \varepsilon m)  \quad \text{a.s.}
  \end{equation}
\end{lemm}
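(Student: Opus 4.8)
The plan is to reduce the statement, via Corollary~\ref{C:Lambda}, to a geodesic--tracking property of the genealogical walks, and then to establish that property simultaneously for all rays of $\sT$ by a first--moment argument over the Galton--Watson tree powered by Gou\"ezel's deviation estimate.

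First, I record the dictionary between shadows and geodesic proximity. By Definition~\ref{def:shadow}, for $x\in\Gamma$ and $\xi\in\partial\Gamma$ one has $\xi\in\mho(x,\,\varepsilon|x|)$ exactly when $\gp[e]{x}{\xi}\geq(1-\varepsilon)|x|$; and by the boundary form of~\eqref{e:roughtree}, since $d(x,\,[e,\,\xi])=|x|-\gp[e]{x}{\xi}+O(\delta)$, this holds as soon as $d(x,\,[e,\,\xi])\leq\varepsilon|x|-C\delta$. On the other hand, Corollary~\ref{C:Lambda} lets us write $\Lambda=\{X_\gamma:\ \gamma\in\partial\sT\}$ almost surely. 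Fix $\gamma\in\partial\sT$, let $\langle\emptyset=u(0),\,u(1),\,\ldots\rangle$ be its ancestral ray, and put $\xi=X_\gamma=\lim_j X_{u(j)}$. Then $X_{u(j)}\in\sP_{|X_{u(j)}|}$ for every $j$, so to prove~\eqref{e:coverLambda} it suffices, given $n$, to exhibit some $j$ with $|X_{u(j)}|\geq n$ and $d(X_{u(j)},\,[e,\,\xi])\leq\varepsilon|X_{u(j)}|-C\delta$.

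The core step is to show that, almost surely, $d(X_u,\,[e,\,X_\gamma])=o(|u|)$ for every $\gamma\in\partial\sT$ and every ancestor $u$ of $\gamma$ with $|u|$ large; combined with $|X_{u(j)}|\geq\ell_0 j\to\infty$ from Lemma~\ref{L:speedsub}, this gives infinitely many $j$ with $d(X_{u(j)},\,[e,\,\xi])=o(j)=o(|X_{u(j)}|)\leq\varepsilon|X_{u(j)}|-C\delta$, in particular one with $|X_{u(j)}|\geq n$, finishing the reduction. For a single ray the tracking property is a standard feature of random walks on hyperbolic groups: Lemma~\ref{L:bigjump} together with the superexponential tails of $\mu$ controls the increments $d(X_{u(j)},\,X_{u(j+1)})$ and yields $\gp[e]{X_{u(j)}}{X_{u(j+1)}}\geq(1-o(1))|X_{u(j)}|$, while Lemma~\ref{L:speedsub} and Lemma~\ref{L:speedup} give $\ell_0 j\leq|X_{u(j)}|\leq Cj$, so there is positive speed and no macroscopic backtracking; hence the genealogical path is eventually a uniform quasigeodesic, and stability of quasigeodesics in the $\delta$-hyperbolic group $\Gamma$ keeps it in a sublinear neighbourhood of the limiting geodesic $[e,\,X_\gamma]$.

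The main obstacle is to upgrade this from a fixed ray to all rays of the uncountable set $\partial\sT$ at once, and this is where the work lies. The plan is a Borel--Cantelli argument over $\sT$: for each $j$, bound the expected number of $u\in\sT_j$ possessing a descendant $w$ such that the path $e\to X_u\to X_w$ strays a distance $\gtrsim\varepsilon j$ off a geodesic. By the many-to-one formula this expectation is dominated by a sum over the sphere of $\lambda$-Green functions of paths making such an excursion, and Lemma~\ref{L:gfoutside} (equivalently the deviation inequalities of~\cite{MS20}) shows that forcing the excursion costs a factor $K^{-cj}$; choosing $K$ large this beats the growth $\E[M_j]\leq CH(\lambda)^j\leq C\e^{vj/2}$ supplied by Theorem~\ref{T:Hr} and~\eqref{e:upper}, so the expectations are summable in $j$. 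Borel--Cantelli then makes all but finitely many $u\in\sT$ ``good'', which gives $d(X_u,\,[e,\,X_\gamma])=o(|u|)$ for every $\gamma$ simultaneously and completes the proof of~\eqref{e:coverLambda}.
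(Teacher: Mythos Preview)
Your reduction via Corollary~\ref{C:Lambda} is correct, but the central claim --- that $d(X_{u(j)},[e,X_\gamma])=o(|u(j)|)$ holds simultaneously along every ray --- is not established, and the ingredients you cite do not yield it. From Lemmas~\ref{L:speedsub}, \ref{L:speedup}, \ref{L:bigjump} and the proof of Proposition~\ref{P:holder} one only gets $\gp[e]{X_{u(j)}}{X_\gamma}\geq \ell_0 j - O(1)$, while $|X_{u(j)}|$ can be as large as $Cj$; hence $d(X_{u(j)},[e,X_\gamma])\leq |X_{u(j)}|-\gp[e]{X_{u(j)}}{X_\gamma}+O(\delta)$ is only bounded by $(C-\ell_0)j+O(1)$, which is linear, not $o(j)$. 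The quasigeodesic appeal does not help either: the Morse lemma needs a uniform lower bound $d(X_{u(i)},X_{u(j)})\geq \lambda^{-1}|i-j|-c$ for \emph{all} pairs $i<j$ along every ray, and nothing in the paper provides this. Your Borel--Cantelli sketch is also problematic: ``$u\in\sT_j$ has a descendant $w$ with $d(X_u,[e,X_w])>\varepsilon j$'' involves all of the infinitely many descendants of $u$, and the many-to-one bound becomes $\sum_{m\geq 1}\lambda^{m}\,\P_x\bigl(\gp[e]{x}{Y_m}<(1-\varepsilon)|x|\bigr)$, a sum over the whole group that Lemma~\ref{L:gfoutside} does not control.

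The paper's proof sidesteps this entirely: it never claims the genealogical points $X_{u(j)}$ themselves cast the shadows. Instead it runs a Borel--Cantelli over $\bS_n$ (not over $\sT$), using Lemma~\ref{L:gfoutside} directly, to show that for every visited $z\in\sP_n$ and every $1\leq k\leq n$ there is some visited point within $\varepsilon n$ of the geodesic projection $\pi_k(z)$. Then, given $\xi=X_\gamma$ and $x_n=X_{u(n)}$, one picks the auxiliary point $y_n\in\sP\cap B(\pi_{\lfloor \ell_0 n\rfloor}(x_n),\varepsilon|x_n|)$; since $|y_n|\approx\ell_0 n\approx\gp[e]{x_n}{\xi}$, this $y_n$ (and not $x_n$) satisfies $\xi\in\mho(y_n,\varepsilon'|y_n|)$. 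The trick of dropping to radius $\lfloor\ell_0 n\rfloor$ is precisely what absorbs the possible gap between $|X_{u(n)}|$ and $\gp[e]{X_{u(n)}}{\xi}$, and it is the idea missing from your argument.
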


\begin{proof}%[Proof of Lemma~\ref{L:cover}]
Fix a geodesic segment $[e, \, x]$ from $e$ to $x$ for any $x\in\Gamma$. For $x \in \Gamma$ with $|x| \geq n$, let $\pi_n(x)$ be the projection of $x$ on $\bS_n$, that is, $\pi_n(x)$ is the point in $[e, \, x] \cap \bS_n$. For $\varepsilon > 0$ consider the events
\[ A_n
  =
  \left\{\exists \, z \in \sP_n\ \text{such that}\ \sP \cap B(\pi_k(z), \, \varepsilon n) = \emptyset \text{ for some } 1 \le k \le n \right\}.
\]
Fix $K > 0$ with $\varepsilon \log K > v$. By Lemma~\ref{L:gfoutside} and~\eqref{e:vgrowth}, we have that, for sufficiently large $n$,
\begin{equation}
  \label{e:proboutside}
  \begin{aligned}
  \P (A_n)
  \leq &
  \sum_{z \in \bS_n} \sum_{k = 1}^n \P \left( z \in \sP_n\ \text{and}\ \sP \cap B \left( \pi_k(z), \, \varepsilon n \right) = \emptyset \right) \\
  \leq &
  \sum_{z \in \bS_n} \sum_{k = 1}^n G_{\rho^{-1}} \left( e, \, z; \, B(\pi_k(z), \, \varepsilon n)^c \right) \\
  \leq &
  c n \e^{v n} K^{- \varepsilon n},
  \end{aligned}
\end{equation}
where $c$ is a positive constant. By the choice of $K$ we have that $\sum_{n = 1}^{\infty} \P (A_n) < \infty$. Applying the Borel-Cantelli lemma, we have that, for almost all $\omega$, there is $n_0 = n_0(\omega)$ such that for any $n \geq n_0$ and $z \in \sP_n$,
\begin{equation}
  \label{e:projection}
  \sP \cap B \left( \pi_k(z),\, \varepsilon n \right) \neq \emptyset, \quad  \forall 1 \leq k \leq n. 
\end{equation}

There is $\Omega_0 \subset \Omega$ such that $\P \left( \Omega_0 \right) = 1$ and for any $\omega \in \Omega_0$ all the conclusions in Lemma~\ref{L:speedup}--\ref{L:bigjump}, Corollary~\ref{C:Lambda} and~\eqref{e:projection} hold true. Fix $\omega \in \Omega_0$ and let $C$ and $\varepsilon$ be the constants in Lemma~\ref{L:speedup} and~\ref{L:bigjump}. By Corollary~\ref{C:Lambda}, for any $\xi\in\Lambda$, there is a geodesic ray $\emptyset=u_0$, $u_1$, $u_2$, $\ldots$ in $\sT$ so that $\xi = \lim_{n \to \infty} X_{u_n}$. Set $x_n = X_{u_n}$ and fix a constant $0 < \ell_0 < - \frac{\log (\lambda \rho)}{v}$. By Lemma~\ref{L:speedup},~\ref{L:speedsub}, Proposition~\ref{P:holder} and its proof, there is a constant $c_1>0$ such that %, for any $\xi\in\Lambda$, we have that
$d_a(x_n, \, \xi) \leq c_1 a^{-\ell_0 n}$ and  $C n \geq |x_n| \geq \ell_0 n$ for $n$ large enough.
%~\ref{L:speedsub} we have that $|x_n| \geq \ell_0 n$ for $n$ large. Thus
% \[
%   d_a(x_n, \, \xi) \leq \sum_{k=n}^\infty d_a(x_k, \, x_{k+1}) \leq c_1 \sum_{k = n}^\infty a^{-\gp[e]{x_k}{x_{k+1}}}
%   \leq c_2 \sum_{k=n}^\infty a^{-|x_k|} \le c_3 a^{-\ell_0 n}.
% \]
% Since for a deterministic constant $c_2>0$, $d_a(x_n, \, \xi) \geq c_2 a^{-\gp[e]{x_n}{\xi}}$, we obtain that for some positive constant $c_3$, almost surely, for any $\xi\in\Lambda$,
This and the fact $d_a(x_n, \, \xi) \geq c_2 a^{-\gp[e]{x_n}{\xi}}$ imply that  
$\gp[e]{x_n}{\xi} \geq \ell_0n - c_3$ for some $c_3$. % for sufficiently large $n$.
By~\eqref{e:projection}, % almost surely, for any $\xi\in\Lambda$,
we can choose $y_n \in \sP \cap B \left( \pi_k(x_n), \, \varepsilon |x_n| \right)$, where $k = \lfloor\ell_0 n\rfloor$ and $\lfloor b\rfloor$ is the integer part of $b\in\R$. % Note~\eqref{e:extend-Gromov-product}. Then we have that for some positive constants $c_4$ and $c_5,$ almost surely, for any $\xi\in\Lambda,$ when $n$ is large enough,
Therefore we have for some positive constants $c_4$ and $c_5$, 
\begin{gather*}
 (\ell_0 - C \varepsilon) n-1 \le |y_n| \le (\ell_0 + C \varepsilon) n,
  \quad
  \gp[e]{x_n}{y_n} \ge (\ell_0 - C \varepsilon) n - c_4,\\
\gp[e]{y_n}{\xi} \geq \min \left\{ \gp[e]{x_n}{y_n}, \, \gp[e]{x_n}{\xi} \right\} - 2 \delta
  \ge (\ell_0 - C \varepsilon) n - c_5
  \ge
  |y_n| - \frac{2 C \varepsilon}{\ell_0 + C \varepsilon} \, |y_n| - c_5,  
\end{gather*}
and thus $\xi \in \mho(y_n, \, 3 C \ell_0^{-1} \varepsilon \, |y_n|).$ This completes the proof of the lemma. % As a consequence,
% \begin{equation}
%   \label{eq:coverLambda}
%   \Lambda \subset \bigcup_{m \geq n}^\infty \bigcup_{x \in \sP_m} \mho \left( x, \, 3 \kappa_m \right)
% \end{equation}
% for every $n \geq 1$.
\end{proof}

\begin{remark}
\label{R:finitely-support}
  If $\mu$ is finitely supported, then by~\cite[Lemma 2.6]{G14}, the right-hand side of~\eqref{e:gfoutside} can be improved to $2^{- \e^{\varepsilon_0 n}}$ for some $\varepsilon_0 > 0$. Using the same idea as the proof of Lemma~\ref{L:cover}, one can obtain that there is a constant $C > 0$ such that
  \[
    \Lambda \subseteq \bigcup_{m = n}^\infty \bigcup_{x \in \sP_m} \mho(x, \, C \log m) \quad \text{a.s.}
  \]
\end{remark}

Recall that $M_n = \left| \sP_n \right|$ and we have proved in Theorem~\ref{T:growth} that $\log H(\lambda) = \limsup\limits_{m \to \infty} \frac{1}{m} \log M_m$ almost surely. 

\begin{proof}[Proof of Theorem~\ref{T:Hausdorff}: upper bound]
For each fixed $h > h(\lambda) = \log H_a(\lambda)$, we can choose $\varepsilon > 0$ so that $(1 - \varepsilon) h \log a > \log H(\lambda)$.  By~\eqref{e:extend-Gromov-product}, we have for any $\xi$ and $\eta$ in $\mho \left( x, \, \varepsilon m \right)$ with $x \in \sP_m$,
\[
  \gp[e]{\xi}{\eta} \ge \min \left\{ \gp[e]{x}{\xi}, \, \gp[e]{x}{\eta} \right\} - 2 \delta
  \ge
  (1 - \varepsilon) m - 2 \delta.
\]
Then we have for some $c > 0$, 
\[
  \diam \mho \left( x, \, \varepsilon m \right)
  \le
  c a^{- (1 - \varepsilon) m},
\]
and therefore
\begin{align*}
  & \sum_{m = n}^\infty \sum_{x \in \sP_m} \left[ \diam \mho \left( x, \, \varepsilon m \right) \right]^h
  \le
  c^h \sum_{m = n}^\infty M_m a^{- (1 - \varepsilon) h m} \\
 & \le
  c^h \sum_{m=n}^\infty \exp \left( - m \left( 1 - \varepsilon \right) h \log a + \log M_m \right),
\end{align*}
which converges to $0$ almost surely. 
Now the desired inequality $\dim_H(\Lambda) \leq h(\lambda)$ follows from Lemma~\ref{L:cover}.
% Therefore, we have
% \begin{equation}
%   \label{eq:upHausdorff}
%   \dim_H \Lambda \leq \frac{\log H(\lambda)}{\log a}
% \end{equation}
% for $1 < \lambda < \rho^{-1}$.
\end{proof}

\begin{remark}
  \label{R:tree-conj}
  If $\Gamma$ is a free group or a free product of finite groups,
  % virtually free,
  then the inequality $\dim_H (\Lambda) \leq \log_a H(\lambda)$ holds in the critical case $\lambda = \rho^{-1}$. In fact, for any $\varepsilon > 0$ we have from the proof of Lemma~\ref{L:cover} that, for almost all $\omega$, there is $n_0 := n_0(\omega)$ such that~\eqref{e:projection} holds for all $z \in \sP_n$ with $n \geq n_0$. We will prove for such a fixed $\omega$ that
  \begin{equation}
    \label{e:cover4}
    \Lambda \subseteq \bigcup_{m=n}^\infty \bigcup_{z \in \sP_m} \mho \left( z, \, 4 \varepsilon m \right).
  \end{equation}
 For any $\xi \in \Lambda$, we can choose $x \in \sP$ so that $\gp[e]{x}{\xi} > n_0$ and in particular $|x| > n_0$. If $\gp[e]{x}{\xi} \geq (1 -  4 \varepsilon) |x|$, then we have $\xi \in \mho \left( x, \, 4 \varepsilon |x| \right)$. Otherwise, applying~\eqref{e:projection} with $k = \lfloor (1 - 2 \varepsilon) |x| \rfloor$, we can choose $y \in \sP \cap B \left( \pi_k(x), \, \varepsilon |x| \right)$, in particular $\gp[e]{x}{\xi} < (1 - 4 \varepsilon) |x| < |y| < (1 - \varepsilon) |x|$. Since the projection of $y$ on the geodesic segment $[e, \, x]$ has length at least $(1 - 3 \varepsilon) |x| > \gp[e]{x}{\xi} + n_0 \varepsilon$, we have from the tree structure of the Cayley graph of $\Gamma$ that $\gp[e]{y}{\xi} \geq \gp[e]{x}{\xi}$. If $\gp[e]{y}{\xi} \geq (1 - 4 \varepsilon)  |y|$, then we have $\xi \in \mho \left( y, \, 4 \varepsilon |y| \right)$; if not, we can repeat the arguments above. Finally we will get some $z \in \sP$ such that $\xi \in \mho \left( z, \, 4 \varepsilon |z| \right)$ and $|z| > \gp[e]{x}{\xi}$. This implies that~\eqref{e:cover4} holds for every $n \in \N$. By the same arguments as the proof of Theorem~\ref{T:Hausdorff}, we prove that $\dim_H (\Lambda) \leq \log_a H(\rho^{-1})$ in the case $\lambda = \rho^{-1}$.  
\end{remark}

\subsection{Proof for lower bound}
\label{s:dim:low}

Following the same argument as Lemma~\ref{L:nonrandom}, one can prove that $\dim_H(\Lambda)$ is a.s. a constant. By the Frostman's lemma, to prove the lower bound for $\dim_H(\Lambda)$, it suffices to construct a probability measure $\chi$ with support contained in $\Lambda$ such that
\[
  \int_{\Lambda} \int_{\Lambda} d_a(x, \, y)^{-h} \rd \chi(x) \rd \chi(y) < \infty \quad  \text{with positive probability}
\]
for all $h < h(\lambda) = \log_a H(\lambda)$.

Let $\sG$ be the $\sigma$-algebra generated by ${\rm BRW}(\Gamma, \, \nu, \, \mu)$. As before, we let $\sP_n$ be the set of vertices in $\bS_n$ that are ever visited by particles of the BRW and denote $M_n = \left| \sP_n \right|$. Conditioned on $\sG$, we choose for every $n \geq 1$ an element $X_n$ of $\sP_n$ uniformly and let $X_n'$ be an independent copy of $X_n$. Then $\P \left( X_n = X_n' \,\big|\, \sG \right) = M_n^{-1}$. Set
$$
A_n := \left\{ M_n > \frac{1}{2} \E \left[ M_n \right] \right\}
\quad \text{and} \quad
W_n := \E \left[d_a(X_n, \, X_n')^{-h} \id_{\{X_n \neq X_n'\}} \,\big|\, \sG \right].
$$

\begin{lemm}
  \label{L:unifWn}
  For any $0< h < h(\lambda)$, $\E \left[ \id_{A_n} W_n \right]$ is uniformly bounded.
\end{lemm}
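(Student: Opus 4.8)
The plan is to condition on $\sG$, exploit the uniform lower bound on $M_n$ available on the event $A_n$ to tame the $M_n^{-2}$ normalization, and then reduce the remaining deterministic sum to the two-point estimate of Lemma~\ref{L:dxy2l}. Throughout I would first reduce to the case $\sigma^2 := \sum_{k} k^2 \nu(k) < \infty$; the general case follows afterwards by the stochastic domination argument already used in the proof of Theorem~\ref{T:growth}, together with the continuity and monotonicity of $H$. Since conditionally on $\sG$ the pair $(X_n,\, X_n')$ is uniform on $\sP_n \times \sP_n$, we have
\[
  W_n = \frac{1}{M_n^2} \sum_{\substack{x,\, y \in \sP_n \\ x \neq y}} d_a(x,\, y)^{-h}.
\]
On $A_n$ one has $M_n > \tfrac12 \E[M_n] \geq \tfrac{c_0}{2} H(\lambda)^n$ by Corollary~\ref{C:EMnlower}, so $\id_{A_n} M_n^{-2} \leq 4 c_0^{-2} H(\lambda)^{-2n}$. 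Taking expectations and using $\P(x,\, y \in \sP_n) = \P(Z_x \geq 1,\, Z_y \geq 1) \leq \E[Z_x Z_y]$,
\[
  \E[\id_{A_n} W_n] \leq \frac{4}{c_0^2\, H(\lambda)^{2n}} \sum_{\substack{x,\, y \in \bS_n \\ x \neq y}} d_a(x,\, y)^{-h}\, \E[Z_x Z_y].
\]

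Next I would insert the visual metric estimate. For $x,\, y \in \bS_n$ we have $\gp[e]{x}{y} = n - \tfrac12 d(x,\, y)$, so~\eqref{e:visual} gives $d_a(x,\, y)^{-h} \leq C^h a^{h(n - d(x,y)/2)}$. Grouping the pairs according to $k = d(x,\, y) \in \{1,\, \ldots,\, 2n\}$, and applying Lemma~\ref{L:Zxy} followed by Lemma~\ref{L:dxy2l},
\[
  \sum_{\substack{x,\, y \in \bS_n \\ d(x,y) = k}} \E[Z_x Z_y]
  \leq \sigma^2 \sum_{\substack{x,\, y \in \bS_n \\ d(x,y) = k}} \sum_{z \in \Gamma} G_\lambda(e,\, z) G_\lambda(z,\, x) G_\lambda(z,\, y)
  \leq c\, \sigma^2\, H(\lambda)^{n + k/2}.
\]
Combining this with the previous display and the visual bound yields, for a constant $c'$ depending only on $\Gamma$, $a$, $h$ and $\sigma^2$,
\[
  \E[\id_{A_n} W_n] \leq \frac{c'}{H(\lambda)^{2n}} \sum_{k=1}^{2n} a^{h(n - k/2)} H(\lambda)^{n + k/2}
  = c'\, \frac{a^{hn}}{H(\lambda)^{n}} \sum_{k=1}^{2n} \left( \frac{H(\lambda)^{1/2}}{a^{h/2}} \right)^{k}.
\]

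Finally, since $h < h(\lambda) = \log_a H(\lambda)$ we have $q := H(\lambda)^{1/2} a^{-h/2} > 1$, so the geometric sum is at most $\tfrac{q}{q-1}\, q^{2n} = \tfrac{q}{q-1}\, H(\lambda)^{n} a^{-hn}$, which exactly cancels the prefactor $a^{hn} H(\lambda)^{-n}$ and leaves a bound independent of $n$. I expect the only genuinely load-bearing point to be this matching of exponents at the end: the "cost" $H(\lambda)^{k/2}$ incurred in Lemma~\ref{L:dxy2l} for conditioning two points of $\bS_n$ at distance $k$ to be visited must be beaten by the "gain" $a^{-hk/2}$ coming from the visual metric, which is precisely the inequality $a^{h/2} < H(\lambda)^{1/2}$; this is where the hypothesis $h < h(\lambda)$ is used. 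The normalization by $M_n^{-2} \asymp H(\lambda)^{-2n}$, legitimate on $A_n$ by the second-moment estimate, is what converts the surviving geometric sum (which grows like $H(\lambda)^n a^{-hn}$) into a uniformly bounded quantity. The one slightly delicate bookkeeping issue is the reduction to the finite-variance case, which I would handle exactly as in the proof of Theorem~\ref{T:growth}, noting that $h < \log_a H(\lambda - \varepsilon)$ for $\varepsilon$ small by continuity of $H$.
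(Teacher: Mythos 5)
Your proof is correct and follows essentially the same path as the paper: condition on $\sG$, exploit $M_n^{-2}\le 4\,\E[M_n]^{-2}\le c\,H(\lambda)^{-2n}$ on $A_n$ via Corollary~\ref{C:EMnlower}, bound $\P(x,y\in\sP_n)\le\E[Z_xZ_y]$, insert the visual-metric estimate $d_a(x,y)^{-h}\le C^h a^{h\gp[e]{x}{y}}$, apply Lemmas~\ref{L:Zxy} and~\ref{L:dxy2l}, and observe that $h<\log_a H(\lambda)$ makes the resulting geometric sum uniformly bounded. The paper indexes by $k=2\gp[e]{X_n}{X_n'}$ (ratio $a^{h/2}H(\lambda)^{-1/2}<1$) while you index by $k=d(x,y)=2n-2\gp[e]{x}{y}$ (ratio $q=H(\lambda)^{1/2}a^{-h/2}>1$ and you sum to $k=2n$); these are the same computation under a change of variable.

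One small clarification: the paper does not actually reduce Lemma~\ref{L:unifWn} itself to the finite-variance case by stochastic domination. Rather, the lemma is stated and proved only under $\sigma^2<\infty$ (it uses Lemma~\ref{L:Zxy}, which needs that), and the reduction to $\sigma^2<\infty$ is performed once, at the level of the final proof of the lower bound in Theorem~\ref{T:Hausdorff}, by comparing the limit set of the $\nu$-BRW with that of a dominated $\nu'$-BRW and invoking continuity of $H$. A direct domination argument at the level of $\E[\id_{A_n}W_n]$ would be awkward because both $A_n$ (through $\E[M_n]$) and $W_n$ depend on $\nu$ in a non-monotone way. Your phrasing suggests you are aware of this, but it is worth being explicit that the reduction lives at the theorem level, not the lemma level.
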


\begin{proof}
  Note that $\gp[e]{X_n}{X_n'} = n - d(X_n, \, X_n') / 2$. Therefore we have for $0 < k \leq 2n$,
  \[
    \P \left(\gp[e]{X_n}{X_n'} = \frac{k}{2} \,\big|\,\sG \right)
    =
    \frac{1}{M_n^2} \sum_{d(x, \, y) = 2n - k} \id_{\{x \in \sP_n,\, y \in \sP_n\}}.
  \]
  By Lemmas~\ref{L:Zxy} and~\ref{L:dxy2l},
  \begin{equation}
    \label{e:uppn-t}
        \E \left[ \sum_{d(x, \, y) = 2n - k} \id_{\{x \in \sP_n,\, y \in \sP_n\}} \right]
    % \leq
    % \sum_{\stackrel{x, \, y\in \bS_n}{d(x, \, y) = 2(n-k)}}\sum_{z \in \Gamma} G_\lambda(e, \, z) G_\lambda(z, \, x) G_\lambda(z, \, y)
    \leq
    c_1  H(\lambda)^{2n - k / 2}
  \end{equation}
  for some constant $c_1 > 0$. Applying~\eqref{e:visual} we have that
  \begin{align*}
   \E \left[ \id_{A_n} W_n \right]
    \leq &
    c_2 \E \left[ \id_{A_n} \E \left[\left. a^{h \, \gp[e]{X_n}{X_n'}} \id_{\{X_n \neq X_n'\}} \,\right|\, \sG \right] \right] \\
    = &
    \sum_{k=1}^{2n} c_2 a^{h k / 2} \E \left[ \id_{A_n} \P \left(\left. \gp[e]{X_n}{X_n'} = \frac{k}{2} \, \right|\, \sG \right)  \right].
  \end{align*}
 By~\eqref{e:uppn-t} and Theorem~\ref{T:Hr}, we have for some $c_3$ that
  \[
    \E \left[ \id_{A_n} W_n \right]
    \leq
    c_3 \sum_{k=1}^{\infty} a^{h k / 2} H(\lambda)^{-k/2}
    <
    \infty,
  \]
  which completes the proof of the lemma.
\end{proof}

We continue the proof of Theorem~\ref{T:Hausdorff}.

\begin{proof}[Proof of Theorem~\ref{T:Hausdorff}: lower bound]
As in the proof of Theorem~\ref{T:growth}, it suffices to prove this theorem under the assumption $\sigma^2 = \sum_k k^2 \nu(k) < \infty$. By the Paley--Zygmund inequality and Corollary~\ref{C:varHn},
  \[
    \P(A_n) \geq  \frac{\left( \E \left[ M_n \right] \right)^2}{4 \, \E \left[ M_n^2 \right]} \geq c_1
  \]
  for some positive constant $c_1$. By Lemma~\ref{L:unifWn}, there is a constant $c_2> 0$ such that $\E \left[ \id_{A_n} W_n \right] \leq c_2$ for every $n \geq 1$. Choose $c_3$ so that $c_3^{-1} c_2 < c_1 / 2$. We have that
  \[
    \P \left( A_n,\, W_n > c_3 \right) \leq c_3^{-1} \E \left[ \id_{A_n} W_n \right] \leq c_3^{-1} c_2
  \]
  and hence
  \begin{equation}
    \label{e:posW}
    \P \left( A_n,\, W_n \leq c_3 \right)
    =
    \P \left( A_n \right) - \P \left( A_n,\, W_n > c_3 \right)
    \geq
    c_1/2.
  \end{equation}
Since $\Gamma \cup \partial \Gamma$ is compact, there is a subsequence $\{n_k\}$ such that $X_{n_k}$ (resp. $X_{n_k}'$) converges weakly to some random variable $\eta$ (resp. $\eta'$) on $\partial \Gamma$. By the Fatou's lemma, $\E \left[ d_a (\eta,\, \eta')^{-h} \,\big|\, \sG \right] \leq \liminf_{k \to \infty} W_{n_k}$. By~\eqref{e:posW}, we have that
  \[
    \P \left( \limsup_{k \to \infty} A_{n_k} \cap \left\{ W_{n_k} \leq c_3 \right\} \right) \geq c_1 / 2 > 0.
  \]
Furthermore, for $\omega \in \limsup_{k \to \infty}  A_{n_k} \cap \left\{ W_{n_k} \leq c_3 \right\}$, we have $\liminf_{k \to \infty} W_{n_k}(\omega) \leq c_3$ and hence
  \[
    \E \left[ d_a(\eta,\, \eta')^{-h} \,\big|\, \sG \right](\omega) \leq c_3.
  \]
  Let $\chi$ be the conditional distribution of $\eta$ given $\sG$. Then
  \[
    \int_{\Lambda(\omega)} \int_{\Lambda(\omega)} d_a(x,\, y)^{-h} \rd \chi(x) \rd \chi (y) \leq c_3.
  \]
  By the comments at the beginning of this subsection, we prove the lower bound $\dim(\Lambda) \geq h(\lambda)$ for $\lambda\in [1, \, \rho^{-1})$. % As in the proof of Theorem~\ref{T:growth}, the case $\lambda = \rho^{-1}$ follows from the fact that $h(\lambda)$ is continuous at $\lambda = \rho^{-1}$.
\end{proof}

\section{Critical exponent}
\label{s:exponent}

To complete proving Theorems~\ref{T:main1} and~\ref{thm1.1}, we show in this section that the function $H(\lambda)$ has critical exponent $1/2$ at the critical point $\rho^{-1}$. In \S\ref{s:sd} we review the thermodynamic formalism associated to an automatic structure of the underlying group (see~\cite{CF10, G14} for more details). Using this machinery, we express the function $\log H(\lambda)$ as the pressure of a transfer operator defined by a certain potential and prove the assertion for critical exponent in \S\ref{s:exponentH}.

\subsection{Symbolic dynamics}
\label{s:sd}

Let $S$ be a finite symmetric generating set of the group $\Gamma$. An \emph{automaton} is a finite direct graph $\sA = (V, \, E, \, s_{*})$ with a distinguished vertex $s_{*}$ as the initial state, and a labeling on edges by generators $\alpha$: $E \to S$. For a directed path $\gamma$ in $\sA$, one can associate a path $\alpha(\gamma)$ in the Cayley graph of $\Gamma$ by multiplying successively the generators read along the edges of $\gamma$. Denote by $\alpha_{*}(\gamma)$ the terminus of $\alpha(\gamma)$.

\begin{defn}
Say that a finitely generated group $\Gamma$ has a \emph{strongly Markov automatic structure} if there is an automaton $\sA = (V, \, E,\, s_{*})$ having the following properties:
  \begin{enumerate}[(i)]
  \item Every $v \in V$ is accessible from the initial state $s_{*}$.
  \item For every directed path $\gamma$ in $\sA$, the path $\alpha(\gamma)$ is a geodesic in $\Gamma$.
  \item The map $\alpha_{*}$ is a bijection of the set of paths starting at $s_{*}$ onto $\Gamma$.
  \end{enumerate}
\end{defn}

By~\cite{C84}, every hyperbolic group admits a strongly Markov automatic structure. In what follows, we fix such an automaton $\sA$ for $\Gamma$. Let $\Sigma^{*}$ (resp. $\Sigma$) be the set of finite (resp. semi-infinite) paths in $\sA$, and $\overline{\Sigma} = \Sigma^{*} \cup \Sigma$. Define the metric $d(\omega, \, \omega') = 2^{-n}$ in $\overline{\Sigma}$, where $n$ is the first time $\omega$ and $\omega'$ diverge. Under this metric, $\Sigma^{*}$ is a dense open subset of the compact space $\overline{\Sigma}$.

By definition, the map $\alpha_{*}$ gives a bijection from the set of paths starting from $s_{*}$ of length $n$ to the sphere $\bS_n$ in $\Gamma$. Moreover, it may be extended naturally to a continuous map from $\overline{\Sigma}$ to $\Gamma \cup \partial \Gamma$.

Let $\sigma$: $\overline{\Sigma} \to \overline{\Sigma}$ be the \emph{(left) shift}, i.e., the map defined by deleting the first edge of a path. For any real-valued H\"older continuous function $\varphi$: $\overline{\Sigma}
\to \R$ (called a potential), we define the transfer operator $\sL_{\varphi}$ acting on the set of continuous functions by
\[
  \sL_{\varphi} f(\omega)
  :=
  \sum_{\sigma(\omega') = \omega}  \e^{\varphi(\omega')} f(\omega'),
\]
where for $\omega = \emptyset$ we only consider the nonempty preimages of the shift $\sigma$. The transfer operator $\sL_{\varphi}$ encodes the Birkhoff sum $S_n \varphi(\omega) := \sum_{j=0}^{n-1} \varphi(\sigma^j \omega)$ in the form
\[
  \sL_{\varphi}^n f(\omega) = \sum_{\sigma^n \omega' = \omega} \e^{S_n \varphi(\omega')} f(\omega').
\]
We are mainly interested in the asymptotics of such sums, which is closely related to the spectrum of $\sL_{\varphi}$ described as follows.

The most fundamental case is where the graph $\sA$ is \emph{topological mixing}, i.e., for two arbitrary vertices $a$, $b$ in $\sA$, for every large enough $n$, there is a path of length $n$ from $a$ to $b$. In this case, the spectrum of $\sL_{\varphi}$ is described by the Ruelle-Perron-Frobenius theorem (Ruelle~\cite{R04}, Bowen ~\cite[Theorem 1.7]{B08}, and Parry and Pollicott~\cite[Theorem 2.2]{PP90}). If the graph $\sA$ is just \emph{recurrent} in the sense that every vertex is accessible from every other vertex, then there is a minimal period $p > 1$ such that the length of any loop is a multiple of $p$. The set of vertices of $\sA$ is the union of $p$ distinct subsets $V_j$, and any edge emanating from a vertex in $V_j$ has the endpoint in $V_{j+1}$ for every $j \in \Z / p \Z$. This decomposition is called a cyclic decomposition of $V$.

When $\sA$ is not even recurrent, one can decompose $\sA$ into components and associate to each component $\sC$ the restriction of $\varphi$ to paths staying in $\sC$. The resulting transfer operator $\sL_{\sC}$ has finitely many eigenvalues of maximal modulus $\e^{\Pr_{\sC}(\varphi)}$ for some real number $\Pr_{\sC}(\varphi)$ (which are called \emph{pressure}), and they are all simple and isolated. Let $\Pr (\varphi) := \max_{\sC} \Pr_{\sC}(\varphi)$ be the maximum of pressure over all components. A component $\sC$ is said to be maximal if $\Pr_{\sC}(\varphi) = \Pr(\varphi)$. The potential $\varphi$ is said to be \emph{semisimple} if there is no directed path between any two distinct maximal components. The following lemma provides a criteria to determine whether a potential is semisimple or not. Let $E_{*}$ be the set of edges starting from the vertex $s_{*}$ and $\id_{[E_{*}]}$ the indicator function equal to $1$ on paths starting with an edge in $E_{*}$ and $0$ elsewhere. By the definition of transfer operator, we have $\sL_{\varphi}^n \id_{[E_{*}]}(\emptyset) = \sum \e^{S_n \varphi(\omega)}$, where the summation is take over all paths $\omega$ starting at $s_{*}$ of length $n$.

\begin{lemm}[{\cite[Lemma 3.7]{G14}}]
  \label{L:maximal}
  Suppose that there is a path from the initial state $s_{*}$ to successively $k > 0$ different maximal components. Then there is a positive constant $C$ such that
  \[
    \sL_{\varphi}^n \id_{[E_{*}]}(\emptyset) \geq C n^{k-1} \e^{n \Pr(\varphi)}.
\]
\end{lemm}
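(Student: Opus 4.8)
This is a classical thermodynamic-formalism estimate, and I would prove it following the standard transfer-operator pattern (as in Parry--Pollicott~\cite{PP90}, Ruelle~\cite{R04} and Gou\"ezel~\cite{G14}). The plan is to bound $\sL_\varphi^n\id_{[E_*]}(\emptyset)=\sum_{\omega}\e^{S_n\varphi(\omega)}$, the sum being over directed paths $\omega$ of length $n$ in $\sA$ starting at $s_*$, from below by restricting the (entirely positive) sum to a rich family of paths that visit the $k$ maximal components in succession. Concretely, fix a directed path in the component graph of $\sA$ realizing the hypothesis and let $\sC_1,\dots,\sC_k$ be the maximal components it meets, in this order; choose a vertex $v_i$ in each $\sC_i$. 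Fix once and for all a directed path $\pi_0$ in $\sA$ from $s_*$ into $v_1$ and directed paths $\pi_i$ from $v_i$ to $v_{i+1}$ for $1\le i\le k-1$, and set $L=\sum_{j=0}^{k-1}|\pi_j|$, a constant depending only on $\sA$ and $k$. Then for any loops $\gamma_i$ based at $v_i$ of length $m_i$ inside $\sC_i$ the concatenation $\omega=\pi_0\gamma_1\pi_1\gamma_2\cdots\pi_{k-1}\gamma_k$ is a legitimate directed path in $\sA$ from $s_*$ of length $L+m_1+\cdots+m_k$, and distinct choices give distinct $\omega$.

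First I would record the usual bounded-distortion estimate for the H\"older potential $\varphi$: there is a constant $C_0$, depending on $\varphi$, $\sA$ and $k$ but not on $n$, such that $S_n\varphi(\omega)\ge\sum_{i=1}^k S_{m_i}\varphi(\gamma_i)-C_0$ for $\omega$ as above, where $S_{m_i}\varphi(\gamma_i)$ is the Birkhoff sum read along $\gamma_i$ (extended periodically in $\overline\Sigma$); this only uses that $\e^{\varphi}$ is bounded away from $0$ and $\infty$ and that $\varphi$ has summable variations, so that Birkhoff sums are additive under concatenation up to a universal error, the bounded contributions of the fixed pieces $\pi_j$ being absorbed into $C_0$. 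Restricting the sum accordingly gives
\[
  \sL_\varphi^n\id_{[E_*]}(\emptyset)\ \ge\ \e^{-C_0}\sum_{m_1+\cdots+m_k=n-L}\ \prod_{i=1}^k\Bigl(\ \sum_{\substack{\gamma\ \text{loop at}\ v_i\\ |\gamma|=m_i,\ \gamma\subset\sC_i}}\e^{S_{m_i}\varphi(\gamma)}\ \Bigr).
\]

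The heart of the matter is a lower bound for each inner sum. Since $\sC_i$ is maximal, the transfer operator of the recurrent subgraph $\sC_i$ has spectral radius $\e^{\Pr_{\sC_i}(\varphi)}=\e^{\Pr(\varphi)}$, and by the Ruelle--Perron--Frobenius theorem in its cyclic form (a recurrent subshift has a period $p_i\ge1$, and the $p_i$-th power of the transfer operator is primitive on each cyclic class with leading eigenvalue $\e^{p_i\Pr(\varphi)}$) there are $c_i>0$ and $m_i^0$ with
\[
  \sum_{\substack{\gamma\ \text{loop at}\ v_i\\ |\gamma|=m,\ \gamma\subset\sC_i}}\e^{S_m\varphi(\gamma)}\ \ge\ c_i\,\e^{m\Pr(\varphi)}\qquad\text{for all }m\ge m_i^0\text{ with }p_i\mid m,
\]
since for such $m$ loops at $v_i$ in $\sC_i$ exist and their $\varphi$-weighted count is asymptotic to a positive multiple of $\e^{m\Pr(\varphi)}$, the Perron right- and left-eigenfunctions being strictly positive at $v_i$. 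Plugging this in and keeping only tuples with $p_i\mid m_i$ and $m_i\ge m_i^0$, we obtain, for all large $n$ in the appropriate residue class modulo $\gcd(p_1,\dots,p_k)$,
\[
  \sL_\varphi^n\id_{[E_*]}(\emptyset)\ \ge\ \e^{-C_0}\Bigl(\prod_{i=1}^k c_i\Bigr)\,\e^{(n-L)\Pr(\varphi)}\cdot N_k(n),
\]
where $N_k(n)=\#\{(m_1,\dots,m_k):\sum_i m_i=n-L,\ p_i\mid m_i,\ m_i\ge m_i^0\}$. A routine count (the generating function $\prod_i(1-x^{p_i})^{-1}$ has a pole of order $k$ at $x=1$, equivalently the lattice points of a $(k-1)$-dimensional simplex of side $\asymp n$) gives $N_k(n)\asymp n^{k-1}$, and since $\e^{(n-L)\Pr(\varphi)}\asymp\e^{n\Pr(\varphi)}$ this yields $\sL_\varphi^n\id_{[E_*]}(\emptyset)\ge C\,n^{k-1}\e^{n\Pr(\varphi)}$ for a suitable $C>0$. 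For $n$ in the remaining residue classes one either prepends or appends a bounded detour through a non-maximal component, which alters the bound only by a multiplicative constant, or simply notes that an estimate along a single arithmetic progression of $n$ is all that is needed in \S\ref{s:exponentH}.

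I expect the main obstacle to be the third step: extracting the uniform lower bound $\sum_{|\gamma|=m,\ \gamma\subset\sC_i}\e^{S_m\varphi(\gamma)}\ge c_i\e^{m\Pr(\varphi)}$ from the Perron--Frobenius theory of the possibly periodic recurrent component $\sC_i$, and matching the periods $p_i$ with the lengths of the fixed connecting paths $\pi_j$ so that the concatenations really have the prescribed total length. The bounded-distortion bookkeeping for $\varphi$ along concatenations is routine but must be carried out uniformly in $n$, and the combinatorial count $N_k(n)\asymp n^{k-1}$ is elementary.
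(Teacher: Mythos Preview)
The paper does not give its own proof of this lemma: it is quoted verbatim from Gou\"ezel~\cite[Lemma~3.7]{G14} and used only as a black box (combined with the purely exponential bound~\eqref{e:Hr} to force $k=1$, i.e.\ semisimplicity). Your outline is the standard argument and is essentially what one finds in~\cite{G14}: restrict the positive sum $\sL_\varphi^n\id_{[E_*]}(\emptyset)$ to concatenations $\pi_0\gamma_1\pi_1\cdots\pi_{k-1}\gamma_k$ with $\gamma_i$ a loop in $\sC_i$, use bounded distortion for the H\"older potential to factor the weight up to a multiplicative constant, apply Ruelle--Perron--Frobenius on each recurrent $\sC_i$ to bound the loop sum below by $c_i\e^{m_i\Pr(\varphi)}$, and count compositions $(m_1,\ldots,m_k)$ to get the factor $n^{k-1}$.

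Two small points. First, in your bounded-distortion step you compare $S_{m_i}\varphi$ on the loop $\gamma_i$ ``extended periodically'' with its value as a subword of the long path $\omega$; what you actually need is the standard telescoping estimate $|S_{|\alpha|+|\beta|}\varphi(\alpha\beta)-S_{|\alpha|}\varphi(\alpha\,\cdot)-S_{|\beta|}\varphi(\beta)|\le C$ for concatenations, which follows from summability of the H\"older variations and is independent of the periodic extension. Second, your argument as written only yields the bound for $n$ in a fixed residue class modulo $\gcd(p_1,\ldots,p_k)$. You correctly flag this and note that for the application in~\S\ref{s:exponentH} (and for the semisimplicity argument after~\eqref{e:Hnr}) a bound along an arithmetic progression already suffices to contradict $H_n(r)\le C\,\e^{n\Pr(\varphi_r)}$ when $k\ge 2$. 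If you want the statement for all $n$, the clean fix is to observe that the connecting paths $\pi_j$ can be chosen of several different lengths (there is more than one path from $s_*$ into $\sC_1$, etc.), which shifts $L$ through a full set of residues; your ``bounded detour'' remark is the same idea.
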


In the case where the potential $\varphi$ is semisimple, the dominating terms of $\sL_{\varphi}^n$ are fairly well decomposed, as in the following theorem. Denote by $\sH^{\beta}$ the space of  $\beta$-H\"older continuous functions on $\overline{\Sigma}$ with the norm $\Vert \cdot \Vert$.

\begin{thm}[{\cite[Theorem 3.8]{G14}}]
  \label{T:semisimple}
  Suppose the potential $\varphi$ is semisimple. Let $\sC_1$, $\ldots$, $\sC_I$ be the maximal components with corresponding periods $p_i\ (1 \leq i \leq I)$, and consider for each $i$ a cyclic decomposition $\sC_i = \cup_{j \in \Z/p_i \Z} \, \sC_{i, \, j}$. Then there are functions $h_{i, \, j}$ and measures $\nu_{i, \, j}$ with $\int h_{i, \, j} \rd \nu_{i,\, j} = 1$, and positive constants $\varepsilon$ and $C$ such that for every H\"older continuous function $f$,
  \begin{equation}
    \label{e:semisimple}
    \left\Vert \sL_{\varphi}^n f - \e^{n \Pr(\varphi)} \sum_{i = 1}^I \sum_{j = 0}^{p_i - 1} \left( \int f \rd \nu_{i, \, ({j-n \mod p_i})} \right) h_{i, \, j} \right\Vert
    \leq
    C \Vert f\Vert \, \e^{-n \varepsilon} \e^{n \Pr(\varphi)}.
  \end{equation}
The probability measures $\rd \mu_i = \frac{1}{p_i} \sum_{j=0}^{p_i - 1}
  h_{i, \, j} \rd \nu_{i,\, j}$ are $\sigma$-invariant and ergodic.

  Denote by $\sC_{\rightarrow, \, i, \, j}$ the set of edges from which $\sC_{i, \, j}$ is accessible with a path of length in $p_i \N$, and $\sC_{i, \, j,\, \rightarrow}$ the set of edges that can be reached from $\sC_{i,\, j}$ by a path of length in $p_i \N$. The function $h_{i,\, j}$ is bounded from below on paths beginning by an edge in $\sC_{i, \, j, \, \rightarrow}$ and the empty path, and vanishes elsewhere. The support of $\nu_{i, \, j}$ is the set of infinite paths beginning in $\sC_{\rightarrow, \, i, \, j}$ and eventually staying in $\sC_i$.
\end{thm}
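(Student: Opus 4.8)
Since this is Theorem~3.8 of Gou\"ezel~\cite{G14}, I will only sketch the plan, which is to combine the classical Ruelle--Perron--Frobenius (RPF) theorem, applied separately to each recurrent component, with the block-triangular structure that $\sL_\varphi$ inherits from the accessibility order on the components of the finite graph $\sA$.

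First I would decompose $\sA$ into its strongly connected (recurrent) components together with the transient vertices (such as $s_{*}$), and list them in an order compatible with accessibility. With respect to the induced filtration of $\sH^{\beta}$ by ``which components a path can still reach'', the operator $\sL_\varphi$ is block upper-triangular: the diagonal block attached to a component $\sC$ is essentially its own transfer operator $\sL_{\sC}$, and the off-diagonal blocks come from directed paths running from one component into a later one. Hence the spectrum of $\sL_\varphi$ is the union of the spectra of the $\sL_{\sC}$ together with $\{0\}$ (from the transient part), its spectral radius is $\e^{\Pr(\varphi)}$ with $\Pr(\varphi)=\max_{\sC}\Pr_{\sC}(\varphi)$, and the peripheral spectrum, of modulus $\e^{\Pr(\varphi)}$, is contributed exactly by the maximal components $\sC_1,\dots,\sC_I$.

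Next, on each recurrent component $\sC$ the restricted subshift is topologically transitive, so after passing to a cyclic decomposition $\sC=\bigcup_{j\in\Z/p\Z}\sC_j$ the $p$-step shift on each $\sC_j$ is topologically mixing; the RPF theorem (Ruelle~\cite{R04}, Bowen~\cite{B08}, Parry--Pollicott~\cite{PP90}) then yields a simple isolated leading eigenvalue $\e^{p\Pr_{\sC}(\varphi)}$ for $\sL_{\sC}^{p}$ on $\sC_j$, with strictly positive H\"older eigenfunction and dual eigenmeasure, and a genuine spectral gap underneath. Unwinding the $p$-step picture produces the $p$ peripheral eigenvalues $\e^{\Pr_{\sC}(\varphi)}\zeta$ with $\zeta^{p}=1$, the eigenfunctions $h_{i,\,j}$ and measures $\nu_{i,\,j}$ of the statement normalised by $\int h_{i,\,j}\,\rd\nu_{i,\,j}=1$; the $\sigma$-invariance and ergodicity of $\rd\mu_i=p_i^{-1}\sum_j h_{i,\,j}\,\rd\nu_{i,\,j}$ is just the classical ergodicity of equilibrium states on transitive subshifts of finite type.

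The heart of the matter will be the semisimplicity hypothesis. A priori the peripheral part of $\sL_\varphi$ could carry Jordan blocks: a directed path visiting two distinct maximal components in succession feeds an off-diagonal block whose iterates produce a polynomial-in-$n$ factor, which is exactly the phenomenon quantified in Lemma~\ref{L:maximal}. Assuming that no directed path joins two distinct maximal components removes this, so the generalised eigenspace of each peripheral eigenvalue reduces to the eigenspace, and the top part of $\sL_\varphi^{n}$ becomes the direct sum of the rank-one projectors $f\mapsto\bigl(\int f\,\rd\nu_{i,\,(j-n\bmod p_i)}\bigr)h_{i,\,j}$, the index shift $j\mapsto j-n\bmod p_i$ recording the cyclic rotation by $\sigma$. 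Here one also has to pin down the supports: $h_{i,\,j}$ is obtained by pulling the component eigenfunction back along the shift --- its value at a path depends only on the first entry into $\sC_i$ and is $0$ on paths that never eventually stay in $\sC_i$ --- which is why it is bounded below precisely on paths starting with an edge in $\sC_{i,\,j,\,\rightarrow}$ and vanishes elsewhere, and dually $\nu_{i,\,j}$ is supported on infinite paths beginning in $\sC_{\rightarrow,\,i,\,j}$ and eventually staying in $\sC_i$. Finally, since there are only finitely many components with $\Pr_{\sC}(\varphi)<\Pr(\varphi)$ strictly for the non-maximal ones, while the within-component remainder has a uniform gap, the complement $R$ of the peripheral part satisfies $\Vert R^{n}\Vert\le C\e^{n(\Pr(\varphi)-\varepsilon)}$ for some $\varepsilon>0$, which is the error term in~\eqref{e:semisimple}. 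I expect the main obstacle to be this structural bookkeeping --- setting up the block-triangular decomposition cleanly, checking that semisimplicity kills every Jordan block, and identifying the exact supports of the $h_{i,\,j}$ and $\nu_{i,\,j}$ --- since the analytic core (the RPF spectral gap on each component) is entirely classical.
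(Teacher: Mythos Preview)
The paper does not prove this theorem at all: it is quoted verbatim as \cite[Theorem~3.8]{G14} and used as a black box in \S\ref{s:exponent}, so there is no ``paper's own proof'' to compare against. Your sketch is a reasonable outline of the standard argument behind Gou\"ezel's result --- block-triangular decomposition along the accessibility order, RPF on each mixing piece of each recurrent component, and the observation that semisimplicity kills the off-diagonal Jordan blocks --- and nothing in it is wrong as a plan, but for the purposes of this paper no proof is required.
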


The following proposition describes the asymptotic behavior of transfer operator under perturbations of the potential.

\begin{prop}[{\cite[Proposition 3.10]{G14}}]
  \label{P:perturbation}
Let $\varphi \in \sH^{\beta}$ be a semisimple potential with maximal components $\sC_1$, $\ldots$, $\sC_I$ and a spectral description as in Theorem~\ref{T:semisimple}. Then there exist $\varepsilon > 0$ and $C > 0$ such that, for all $\psi$ small enough in $\sH^{\beta}$, there exist functions $h_{i, \, j}^{\psi}$ and measures $\nu_{i, \, j}^{\psi}$ with the same support as $h_{i,\, j}$ and $\nu_{i,\, j}$ respectively, and real numbers $\Pr_i(\varphi + \psi)$ satisfying that for every H\"older continuous function $f$,
\[
  \left\Vert \sL_{\varphi + \psi}^n f - \sum_{i=1}^I \e^{n \Pr_i(\varphi + \psi)} \sum_{j = 0}^{p_i - 1} \left( \int f \rd \nu^{\psi}_{i, \, ({j-n \mod p_i})} \right) h^{\psi}_{i,\, j} \right\Vert
  \leq
  C \Vert f\Vert \, \e^{-n \varepsilon} \e^{n \Pr(\varphi)}.
\]
The maps $\psi \mapsto \Pr_i(\varphi + \psi)$, $\psi \mapsto h^{\psi}_{i, \, j}$ and $\psi \mapsto \nu^{\psi}_{i, \, j}$ are real analytic (in the norm sense) from a small ball around $0$ in $\sH^{\beta}$ to $\mathbb{R}$, $\sH^{\beta}$ and the dual of $\sH^{\beta}$ respectively. Finally, with $\rd \mu_i = \frac{1}{p_i} \sum_{j=0}^{p_i-1}h_{i, \, j}\rd\nu_{i, \, j}$,
\[
  \Pr_i(\varphi + \psi)
  =
  \Pr(\varphi) + \int \psi \rd \mu_i + O(\Vert \psi \Vert^2).
\]
 \end{prop}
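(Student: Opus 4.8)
The plan is to obtain this proposition from Theorem~\ref{T:semisimple} via standard analytic perturbation theory for isolated eigenvalues of bounded operators on the Banach space $\sH^{\beta}$. The starting observation is the multiplicative nature of the perturbation: since
\[
  \sL_{\varphi+\psi} f(\omega)
  = \sum_{\sigma\omega'=\omega}\e^{\varphi(\omega')}\e^{\psi(\omega')}f(\omega')
  = \sL_{\varphi}\bigl(\e^{\psi}f\bigr)(\omega),
\]
we have $\sL_{\varphi+\psi}=\sL_{\varphi}\circ M_{\e^{\psi}}$, where $M_{g}$ denotes multiplication by $g$. Because $\sH^{\beta}$ is a Banach algebra, $\psi\mapsto\e^{\psi}$ is real analytic from $\sH^{\beta}$ into $\sH^{\beta}$, hence $\psi\mapsto M_{\e^{\psi}}$ is real analytic into the bounded operators on $\sH^{\beta}$, and therefore $\psi\mapsto\sL_{\varphi+\psi}$ is a real analytic family of bounded operators passing through $\sL_{\varphi}$ at $\psi=0$.

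Next I would feed this into the spectral picture of Theorem~\ref{T:semisimple}. For each maximal component $\sC_{i}$ of period $p_{i}$, the operator $\sL_{\varphi}$ has the simple isolated peripheral eigenvalues $\zeta\,\e^{\Pr(\varphi)}$ with $\zeta^{p_{i}}=1$, whose right eigenfunctions are built from the $h_{i,j}$ and whose left eigenfunctionals are built from the $\nu_{i,j}$; semisimplicity ensures there is no nilpotent part coupling distinct components, and the rest of the spectrum sits in a disc of radius $\e^{\Pr(\varphi)-\varepsilon_{0}}$ for some $\varepsilon_{0}>0$ (the gap implicit in~\eqref{e:semisimple}). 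Applying analytic perturbation theory to the family $\sL_{\varphi+\psi}$, each of these finitely many eigenvalues together with its rank-one spectral projection depends analytically on $\psi$ near $0$, while for $\psi$ small the complementary spectrum stays inside the disc of radius $\e^{\Pr(\varphi)-\varepsilon}$ with $\varepsilon=\varepsilon_{0}/2$. Since the perturbation is of multiplicative type and does not alter the automaton $\sA$, and hence its cyclic structure, the perturbed peripheral eigenvalues of $\sL_{\varphi+\psi}$ again form, component by component, a single $p_{i}$-orbit $\zeta\,\e^{\Pr_{i}(\varphi+\psi)}$ with $\zeta^{p_{i}}=1$ and $\Pr_{i}(\varphi+\psi)$ real analytic. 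Decomposing $\sL_{\varphi+\psi}^{n}$ along these projections exactly as in the proof of Theorem~\ref{T:semisimple}, collecting the $\zeta^{n}$ phases into the shift $(j-n\bmod p_{i})$, and estimating the complementary part by its spectral radius yields the displayed asymptotics, with $h^{\psi}_{i,j}$, $\nu^{\psi}_{i,j}$ the perturbed eigenfunctions and eigenfunctionals and error $C\|f\|\,\e^{-n\varepsilon}\e^{n\Pr(\varphi)}$.

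Two points remain: the support statement and the first-order expansion. For the support, I would note that $\sL_{\varphi+\psi}$ still maps nonnegative functions to nonnegative ones and still preserves the subspace of Hölder functions supported on paths beginning with an edge in $\sC_{i,j,\rightarrow}$ (together with the empty path), so $h^{\psi}_{i,j}$ vanishes off that set; and since $h^{\psi}_{i,j}\to h_{i,j}$ in $\sH^{\beta}$, hence uniformly, while $h_{i,j}$ is bounded below on the closed set supporting it, $h^{\psi}_{i,j}$ is bounded below there once $\psi$ is small, so its support is unchanged, and the transpose argument handles $\nu^{\psi}_{i,j}$. For the expansion, differentiate $\zeta\,\e^{\Pr_{i}(\varphi+t\dot\psi)}$ at $t=0$: with $\tfrac{d}{dt}\sL_{\varphi+t\dot\psi}\big|_{t=0}=\sL_{\varphi}\circ M_{\dot\psi}$, the classical eigenvalue-derivative formula together with $\sL_{\varphi}^{*}\nu_{i,j}=\e^{\Pr(\varphi)}\nu_{i,j'}$ (up to cyclic relabeling) and the normalization $\int h_{i,j}\,\rd\nu_{i,j}=1$ gives $\tfrac{d}{dt}\Pr_{i}\big|_{t=0}=\frac{1}{p_{i}}\sum_{j}\int\dot\psi\,h_{i,j}\,\rd\nu_{i,j}=\int\dot\psi\,\rd\mu_{i}$, and the $O(\|\psi\|^{2})$ remainder is Taylor's theorem for the analytic map $\psi\mapsto\Pr_{i}(\varphi+\psi)$.

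The main obstacle is the periodic bookkeeping rather than any isolated hard estimate: one is perturbing a whole ring of peripheral eigenvalues per component, not a single Perron eigenvalue, so one must check both that after perturbation these stay organized as a $\zeta\mapsto\zeta\,\e^{\Pr_{i}}$ orbit—so that a single real pressure $\Pr_{i}$ suffices—and that the index shift $(j-n\bmod p_{i})$ reappears; this is exactly where the multiplicative form $M_{\e^{\psi}}$ and the rigidity of the underlying graph $\sA$ are essential. The support-stability assertion is the other delicate point, and it is established by invariance of the combinatorially defined subspaces under $\sL_{\varphi+\psi}$ rather than by a continuity estimate alone.
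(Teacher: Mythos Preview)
The paper does not prove this proposition at all: it is quoted verbatim from \cite[Proposition~3.10]{G14} and used as a black box in \S\ref{s:exponent}. There is therefore no ``paper's own proof'' to compare against.

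That said, your sketch is the standard route and is essentially the argument given in \cite{G14} (which in turn follows the classical perturbation theory for Ruelle operators as in \cite{PP90}). The key ingredients---analyticity of $\psi\mapsto\sL_{\varphi+\psi}$ via the multiplicative form $\sL_\varphi\circ M_{\e^\psi}$, Kato-type perturbation of the finitely many simple isolated peripheral eigenvalues, and the first-order formula for the pressure derivative---are all correctly identified. Your handling of the two genuinely delicate points is also on target: the persistence of the $p_i$-fold cyclic structure after perturbation does indeed rest on the fact that the combinatorial decomposition of $\overline{\Sigma}$ into paths starting in each $\sC_{i,j,\rightarrow}$ is invariant under $\sL_{\varphi+\psi}$ for \emph{any} $\psi$ (since the shift structure of $\sA$ is untouched), and the support stability follows from the same invariance together with uniform convergence $h^{\psi}_{i,j}\to h_{i,j}$. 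One small point worth tightening: the eigenvalue-derivative computation is cleanest if you work with $\sL_{\varphi+\psi}^{p_i}$ restricted to the invariant subspace associated to $\sC_{i,j}$, where the peripheral eigenvalue is simple and real, rather than juggling the full $p_i$-orbit at once; this makes the averaging $\frac{1}{p_i}\sum_j$ in the definition of $\mu_i$ appear transparently.
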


Note that the pressures $\Pr_{\sC_i}(\varphi + \psi)$ are not necessarily the same. However, for small enough $\psi$, the pressures of $\varphi + \psi$ on components other than the maximal ones are bounded away from $\Pr(\varphi)$. Consequently, $\varphi + \psi$ is also semisimple and its maximal components appear within those of $\varphi$.

\subsection{H\"older continuity of Green functions}

Define for $r \in [1, \, \rho^{-1}]$ the function $\varphi_r$ on $\Sigma^{*}$ by
\begin{equation}
  \label{e:phir}
  \varphi_r(\omega)
  :=
  \log \left( \frac{G_r(e, \, \alpha_{*}(\omega))}{G_r(e, \, \alpha_{*}(\sigma \omega))} \right).
\end{equation}
As a consequence of the uniform Ancona inequalities, the function $\varphi_r$ is $\beta$-H\"older continuous for some $\beta > 0$. Furthermore, we have from~\cite[Lemma 3.11]{G14} that $\left\Vert \varphi_r - \varphi_{\rho^{-1}} \right\Vert \leq c \left(\rho^{-1} - r \right)^{1/3}$ for some $c > 0$. It is claimed in~\cite[Remark 3.12]{G14} that $1/3$ can be replaced by $1/2$ in this inequality. Here we provide a proof for the sake of completeness.

\begin{lemm}
  \label{L:Hphir}
  The function $\varphi_r$ is $\beta$-H\"older continuous for some $\beta > 0$. Furthermore, there is $C > 0$ such that for any $r \in [1,\, \rho^{-1}]$,
  \begin{equation}
    \label{e:Hphir}
    \left\Vert \varphi_r - \varphi_{\rho^{-1}} \right\Vert
    \leq
    C \, \left(\rho^{-1}-r \right)^{1/2}.
  \end{equation}
\end{lemm}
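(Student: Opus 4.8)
The plan is to treat the two assertions separately: H\"older continuity is a quick consequence of the strong uniform Ancona inequality, while the quantitative estimate~\eqref{e:Hphir} is obtained by differentiating $\varphi_r$ in $r$ and integrating, the exponent $1/2$ being produced by the asymptotics of $\sum_z G_r(e,z)^2$.

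\textbf{H\"older continuity.} First I would rewrite, using left-invariance of the Green function, $\varphi_r(\omega)=\log\big(G_r(e,x)/G_r(s,x)\big)$ with $x:=\alpha_*(\omega)$ and $s$ the label of the first edge of $\omega$; note $s$ lies on a geodesic from $e$ to $x$. If $\omega$ and $\omega'$ agree on their first $n$ edges they share the first label $s$, and $\alpha_*(\omega),\alpha_*(\omega')$ lie beyond a common point at distance $n$ from $e$ on geodesics to both, so the four points $e,s,\alpha_*(\omega),\alpha_*(\omega')$ form a configuration tree-approximated as in Theorem~\ref{T:ancona1} with long middle edge of length $\gtrsim n$. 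That theorem then gives $\big|\tfrac{G_r(e,\alpha_*(\omega))/G_r(s,\alpha_*(\omega))}{G_r(e,\alpha_*(\omega'))/G_r(s,\alpha_*(\omega'))}-1\big|\le C\e^{-\eta n}$, whence $|\varphi_r(\omega)-\varphi_r(\omega')|\le C\e^{-\eta n}=C\,d(\omega,\omega')^{\beta}$ with $\beta=\eta/\log 2$, uniformly in $r\in[1,\rho^{-1}]$.

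\textbf{The estimate~\eqref{e:Hphir}.} The analytic input I would use is $c(r):=\sum_z G_r(e,z)^2=\partial_r[rG_r(e,e)]$ (already appearing in the proof of Theorem~\ref{T:Hr}): from the heat kernel asymptotics~\eqref{e:hkdecay} one has $c(r)=\sum_n(n+1)r^np_n(e,e)\asymp\sum_n(r\rho)^nn^{-1/2}\asymp(\rho^{-1}-r)^{-1/2}$ as $r\uparrow\rho^{-1}$, so $\int_r^{\rho^{-1}}c(s)\,\rd s\le C(\rho^{-1}-r)^{1/2}$. Next, using $\partial_r[rG_r(u,y)]=\sum_zG_r(u,z)G_r(z,y)$, I would express $\partial_r\varphi_r(\omega)=\frac1r\big(\sum_zQ_e(z)-\sum_zQ_s(z)\big)$ with $Q_u(z):=G_r(u,z)G_r(z,x)/G_r(u,x)$, and show the ``ballistic'' (order $|x|$) parts of the two sums cancel: decomposing $z$ by the position $t$ of its projection on a geodesic $[e,x]$, Theorem~\ref{T:ancona1} gives $Q_e(z)/Q_s(z)=1+O(\e^{-\eta t})$, while Theorem~\ref{T:ancona} together with $\sum_{z:\pi(z)=w}G_r(w,z)^2=c(r)$ bounds the position-$t$ layer by $O(c(r))$; summing over $t$ gives $\|\partial_r\varphi_r\|_\infty\le Cc(r)$. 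Then I would run the same layered decomposition for $\partial_r\varphi_r(\omega)-\partial_r\varphi_r(\omega')$ when $\omega,\omega'$ agree on $n$ edges (so $\alpha_*(\omega),\alpha_*(\omega')$ split only beyond distance $n$): comparing $x$ with $x'$ through Theorem~\ref{T:ancona1} gains a factor $O(\e^{-\eta n})$ on the layers before the split, and the layers past the split are absorbed by $\sum_zG_r(w,z)^2=c(r)$ with the same gain, yielding $|\partial_r\varphi_r(\omega)-\partial_r\varphi_r(\omega')|\le Cc(r)\e^{-\eta'n}$ for any $\eta'<\eta$, hence $\|\partial_r\varphi_r\|\le Cc(r)$ in $\sH^{\beta}$ with $\beta$ slightly below $\eta'/\log 2$. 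Finally, since $\varphi_r-\varphi_{\rho^{-1}}=-\int_r^{\rho^{-1}}\partial_s\varphi_s\,\rd s$ as an $\sH^{\beta}$-valued Bochner integral (legitimate by the bounds just obtained), I conclude $\|\varphi_r-\varphi_{\rho^{-1}}\|\le\int_r^{\rho^{-1}}\|\partial_s\varphi_s\|\,\rd s\le C\int_r^{\rho^{-1}}(\rho^{-1}-s)^{-1/2}\,\rd s=2C(\rho^{-1}-r)^{1/2}$.

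\textbf{Where the difficulty lies.} The hard part will be the layered Ancona bookkeeping for the seminorm of $\partial_r\varphi_r$: correctly matching each four-point configuration (for $e$ versus $s$, for $x$ versus $x'$, and for points far from the geodesic) to the hypotheses of Theorems~\ref{T:ancona} and~\ref{T:ancona1}, and verifying that on each layer the appropriate decay — $\e^{-\eta t}$, $\e^{-\eta n}$, or the off-geodesic bound $\sum_zG_r(w,z)^2=c(r)$ — is available, so that the ballistic divergence cancels and only the fluctuation of size $c(r)\asymp(\rho^{-1}-r)^{-1/2}$ remains. For the use made of the lemma in~\S\ref{s:exponent} it would actually be enough to interpolate the sup-norm bound $\|\varphi_r-\varphi_{\rho^{-1}}\|_\infty\le C(\rho^{-1}-r)^{1/2}$ against the uniform H\"older estimate, giving $\|\varphi_r-\varphi_{\rho^{-1}}\|\le C(\rho^{-1}-r)^{1/2-\varepsilon}$ for every $\varepsilon>0$; the seminorm argument is needed only to reach the exponent $1/2$ on the full norm as stated.
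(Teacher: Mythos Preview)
Your proposal is correct and follows essentially the same strategy as the paper: differentiate $\varphi_r$ in $r$ via $\partial_r[rG_r(u,y)]=\sum_z G_r(u,z)G_r(z,y)$, use the strong uniform Ancona inequalities to extract an $\e^{-\eta n}$ cancellation, bound the remaining sum by $\eta(r)=\sum_z G_r(e,z)^2\asymp(\rho^{-1}-r)^{-1/2}$, and integrate. The only organizational difference is that the paper, rather than comparing $\partial_r\varphi_r(\omega)$ to $\partial_r\varphi_r(\omega')$ directly through a layered decomposition by projection position, subtracts the value at the common prefix $y=\alpha_*(\omega_n)$ and rewrites the derivative of $f(r)=\log\big(G_r(e,x)/G_r(a,x)\big)-\log\big(G_r(e,y)/G_r(a,y)\big)$ as a sum over $z$ of $A_1-A_2$, where each $A_i$ is a product of strong-Ancona ratios giving $|A_i|\le C\e^{-\eta n}$ in one stroke; combined with \cite[Lemma~3.20]{G14} for $\sum_z G_r(e,z)G_r(z,y)/G_r(e,y)\le Cn\,\eta(r)$ this yields $|f'(r)|\le Cn\e^{-\eta n}(\rho^{-1}-r)^{-1/2}$, avoiding the case analysis before/after the split that your layered approach would require.
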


\begin{proof}
  We follow the idea in the proof of~\cite[Lemma 3.4]{G14}. Consider two paths $\omega$, $\omega' \in \Sigma^{*}$ with $d(\omega,\, \omega') = 2^{-n} < 1$. By definition we have that $\omega_k = \omega'_k$ for $0 \leq k \leq n$. Set $x = \alpha_{*}(\omega)$, $x'=\alpha_*(\omega')$, $a = \alpha_{*}(\omega_0)$ and $y = \alpha_{*}(\omega_n)$. Then $\varphi_r(\omega) = \log \left( G_r(e, \, x) / G_r(a, \, x) \right)$ and $\varphi_r(\omega') = \log \left( G_r(e, \, x') / G_r(a, \, x') \right)$. By symmetry, to bound $\varphi_r(\omega) - \varphi_r(\omega')$, it suffices to consider the function $f(r)$ defined by
\[
f(r)
:=
\log \left( \frac{G_r(e, \, x)}{G_r(a, \, x)} \right) - \log \left( \frac{G_r(e, \, y)}{G_r(a, \, y)} \right).
\]
Therefore the lemma follows if we can prove that there are positive constants $C$ and $\eta$ independent of $x$ and $y$ so that
\[
\left| f'(r) \right|
\leq
C \, \e^{-\eta n} \left( \rho^{-1} - r \right)^{-1/2}, \quad r \in [1, \, \rho^{-1}).
\]

Recall from~\cite[Proposition~1.9]{GL13} that the derivative of $G_r(x, \, y)$ with respect to $r$ is given by
\begin{equation}
  \label{e:derivative}
  \frac{\partial}{\partial r} \left[ r G_r(x, \, y) \right] = \sum_{z \in \Gamma} G_r(x, \, z) G_r(z, \, y).
\end{equation}
Thus we have
\begin{align*}
  f'(r)
  % = &
  % \frac{\rd}{\rd r} \left[ \log (r G_r(e,\, x)) - \log(r G_r(a,\, x)) - \log(r G_r(e,\, y)) + \log (r G_r(a,\, y)) \right] \\
  =&
     \frac{1}{r} \sum_{z \in \Gamma} \left\{ \frac{G_r(e, \, z) G_r(z, \, x)}{G_r(e, \, x)} - \frac{G_r(a, \, z) G_r(z, \, x)}{G_r(a, \, x)} - \frac{G_r(e, \, z) G_r(z, \, y)}{G_r(e, \, y)} + \frac{G_r(a, \, z) G_r(z, \, y)}{G_r(a, \, y)} \right\} \\
  =&
     \frac{1}{r} \sum_{z \in \Gamma} \frac{G_r(e, \, z) G_r(z, \, y)}{G_r(e, \, y)} \left(A_1 -  A_2 \right),
\end{align*}
where
\[
  A_1 := \frac{G_r(e, \, x) / G_r(a, \, x)}{G_r(e, \, z) / G_r(a, \, z)} \left(\frac{G_r(e, \, y) / G_r(a, \, y)}{G_r(e, \, x) / G_r(a, \, x)} - 1 \right),
\]
and
\[
  A_2 := \left(\frac{G_r(e, \, x) / G_r(a, \, x)}{G_r(e, \, z) / G_r(a, \, z)} - 1 \right) \left(\frac{G_r(e, \, y) / G_r(z, \, y)}{G_r(e, \, x) / G_r(z, \, x)} - 1 \right).
\]
Let $w$ be the projection of $z$ onto the geodesic segment connecting $e$ and $y$. See Figure~\ref{f:holder} for an illustration.
\begin{figure}[h]
  \centering
  \begin{tikzpicture}
    \filldraw (-1.5,0.5) circle (1pt) node[left] {$e$} -- (0,0) circle (1pt) node[above] {$a$} -- (3,0) circle (1pt) node[above right] {$w$} -- (5,0) circle (1pt) node[above] {$y$} -- (7,0.4) circle (1pt) node[right] {$x$};
    \filldraw (3,1.2) circle (1pt) node[above] {$z$} -- (3,0);
    \filldraw (5,0) -- (6.5,-0.4) circle (1pt) node[right] {$x'$};
    \node[below] at (2,0) (n) {$n - 1$};
    \path let \p1 = (n.west) in coordinate (a) at (0,\y1);
    \path let \p1 = (n.west) in coordinate (b) at (5,\y1);
    \draw[<-, densely dashed] (a) -- (n.west);
    \draw[->, densely dashed] (n.east) -- (b);
  \end{tikzpicture}
  \caption{Illustration of the proof for Lemma~\ref{L:Hphir}}
  \label{f:holder}
\end{figure}
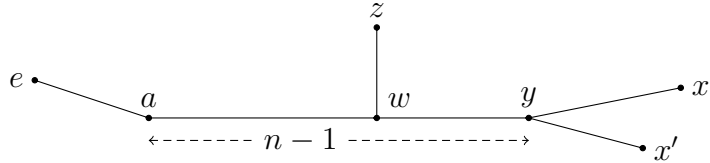
By the strong uniform Ancona inequalities~\cite[Theorem 2.9]{G14} (see also Theorem~\ref{T:ancona1}), there are positive constants $c_1$ and $\eta$ such that
\[
  \left| \frac{G_r(e, \, y) / G_r(a, \, y)}{G_r(e, \, x) / G_r(a, \, x)} - 1 \right|
  \leq
  c_1 \e^{-\eta n}, \quad
  \left| \frac{G_r(e, \, x) / G_r(a, \, x)}{G_r(e, \, z) / G_r(a, \, z)} - 1 \right|
  \leq
  c_1 \e^{-\eta |w|},
\]
and
\[
  \left| \frac{G_r(e, \, y) / G_r(z, \, y)}{G_r(e, \, x) / G_r(z, \, x)} - 1 \right|
  \leq
  c_1 \e^{-\eta (n - |w|)}.
\]
It follows that $|A_1| \leq c_2 \e^{-\eta n}$ and $|A_2| \leq c_2 \e^{-\eta n}$ for some $c_2 > 0$. Applying~\cite[Lemma 3.20 and Theorem 3.1]{G14}, we have that for positive constants $c_3$ and $c_4$,
\[
  \sum_{z \in \Gamma} \frac{G_r(e, \, z) G_r(z, \, y)}{G_r(e, \, y)}
  \leq
  c_3 n \sum_{z \in \Gamma} G_r(e, \, z) G_r(z,\, e)
  \leq
  c_4 n \, (R - r)^{-1/2}.
\]
Therefore we obtain that for positive constants $c_5$ and $c_6$, 
\[
\left| f'(r) \right|
\leq
c_5 \e^{-\eta n} \sum_{z \in \Gamma} \frac{G_r(e,\, z) G_r(z,\, y)}{G_r(e,\, y)}
\leq
c_6 n \e^{-\eta n} \, (R - r)^{-1/2},
\]
which completes the proof of this lemma.
\end{proof}

\subsection{Critical exponent for \texorpdfstring{$H(\lambda)$}{H(λ)}}
\label{s:exponentH}

As mentioned in the last subsection, the function $\varphi_r$ defined in~\eqref{e:phir} is H\"older continuous and can be extended to $\overline{\Sigma}$. Let $\sL_r := \sL_{\varphi_r}$ be the corresponding transfer operator. Then
\begin{align*}
  G_r(e, \, e) \sL^n_r \id_{[E_{*}]} (\emptyset)
  = & G_r(e, \, e) \sum_{\omega = \omega_0 \cdots \omega_{n-1}} \e^{S_n \varphi_r (\omega)} \id_{\{ \omega_0 \in E_{*} \}} \\
  = & \sum_{\omega=\omega_0 \cdots \omega_{n-1}} G_r(e, \, \alpha_{*}(\omega_0 \cdots \omega_{n-1})) \id_{\{\omega_0 \in E_{*}\}},
\end{align*}
where $E_{*}$ is the set of edges starting from $s_{*}$ in $\sA$, and $\id_{[E_{*}]}$ is the function equal to $1$ on the paths starting with an edge in $E_{*}$ and $0$ elsewhere. Since $\alpha_{*}$ is a bijection between the paths of length $n$ starting from $s_{*}$ and $\bS_n$, the function $H_n(r) := \sum_{x \in \bS_n} G_r(e,\, x)$ studied in \S\ref{s:rw} may be expressed as
\begin{equation}
  \label{e:Hnr}
  H_n(r)
  =
  G_r(e, \, e) \sL^n_r \id_{[E_{*}]} (\emptyset).
\end{equation}
Therefore the growth rate $H(r) := \limsup\limits_{n \to \infty} H_n^{1/n}(r)$ is exactly the same as $\e^{\Pr(\varphi_r)}$, the largest eigenvalue of the operator $\sL_r$.

By Theorem~\ref{T:Hr} and~\cite[Lemma 3.7]{G14} (see also Lemma~\ref{L:maximal}), we have that $\sL_r$ is semisimple for every $r \in [1, \, \rho^{-1}]$. Let $\sC_1$, $\ldots$, $\sC_I$ be the maximal components for $\sL_{\rho^{-1}}$ with corresponding periods $p_i$, and take for each $i$ a cyclic decomposition $\sC_i = \bigcup_{0 \leq j \leq p_i - 1} \sC_{i,\, j}$. By~\cite[Proposition 3.10]{G14} (see also Proposition~\ref{P:perturbation}), there is $1 < r_0 < \rho^{-1}$ such that for every $r \in [r_0, \, \rho^{-1}]$, there are functions $h_{i,\, j}^{(r)}$ and measures $\nu_{i,\, j}^{(r)}$ (with the same support as respectively $h_{i,\, j}$ and $\nu_{i,\, j}$ in Theorem~\ref{T:semisimple}) and numbers $\Pr_i(\varphi_r)$ such that
\begin{equation}
  \label{e:Pri}
  \Pr_i(\varphi_r)
  =
  \Pr(\varphi_{\rho^{-1}}) + \int \left( \varphi_r - \varphi_{\rho^{-1}} \right) \rd \mu_i + O(\Vert \varphi_r - \varphi_{\rho^{-1}} \Vert^2)
\end{equation}
and
\begin{equation}
  \label{e:perturbation}
  \left\Vert \sL_r^n f - \sum_{i = 1}^I \e^{n \Pr_i(\varphi_r)} \sum_{j = 0}^{p_i - 1} \left( \int f \rd \nu_{i,\, ({j - n \mod p_i})}^{(r)} \right) h_{i,\, j}^{(r)} \right\Vert
\leq
C \Vert f \Vert \, \e^{-n \varepsilon} \, \e^{n \Pr(\varphi_{\rho^{-1}})},
\end{equation}
for some $\varepsilon > 0$ and $C > 0$, where $\rd \mu_i = \frac{1}{p_i} \sum_{j = 0}^{p_i - 1} h_{i,\, j}^{(\rho^{-1})} \rd \nu_{i,\, j}^{(\rho^{-1})}$. Furthermore, $\Pr_i(\varphi_r)$, $h_{i,\, j}^{(r)}$ and $\nu_{i,\, j}^{(r)}$ are continuous in $r \in [r_0, \, \rho^{-1}]$. Denote by $I(r) = \left\{ 1 \leq i \leq I:\ \Pr_i(\varphi_r) = \max\limits_{1 \leq j \leq I} \Pr_j(\varphi_r) \right\}$.

Define
\[ V_n(r) := \frac{1}{n} \log \left( r H_n(r) \right). \]
We have that
\begin{equation}
  \label{e:Vn'}
  V_n'(r) % = \frac{1}{n r H_n(r)} \sum_{x \in \bS_n} \sum_{y \in \Gamma} G_r(e, \, y) G_r(y, \, x)
  =
  \frac{1}{r n H_n(r)} \sum_{x \in \bS_n} G_r(e, \, x) \Phi_r(x),
\end{equation}
where
\[ \Phi_r(x) := \sum_{z\in \Gamma} \frac{G_r(e, \, z) G_r(z, \, x)}{G_r(e, \, x)}. \]
Set
\[ \eta(r) := \sum_{y \in \Gamma} G_r(e, \, y) G_r(y, \, e). \]
By~\cite[Lemma 3.20]{G14}, there is a positive constant $C_1$ such that $\Phi_r(x)\leq C_1 \, (1 + |x|) \eta(r)$ for every $r \in [1, \, \rho^{-1}]$ and $x \in \Gamma$. Consequently, we have that $V_n'(r) / \eta(r) \leq 2 C_1$.

The following lemma provides more accurate estimate for the function $\Phi_r(x)$.

\begin{lemm}[{\cite[Lemma 3.23]{G14}}]
  \label{L:Phi}
  There is a family of H\"older continuous functions $\psi_r$ on $\overline{\Sigma}$ for $r \in
  [1,\, \rho^{-1})$ with the following properties:
  \begin{enumerate}[(i)]
  \item As $r \to \rho^{-1}$, $\psi_r$ converges to a function $\psi$ in the H\"older topology.
  \item For any $\omega \in \Sigma^{*}$ of length $n$,
    \begin{equation}
      \label{e:Phi}
      \Phi_r(\alpha_{*}(\omega)) = \eta(r) S_n \psi_r(\omega) + O(\eta(r)),
    \end{equation}
    where $S_n \psi_r$ is the Birkhoff sum $\sum_{k=0}^{n-1} \psi_r \circ \sigma^k$.
  \end{enumerate}
\end{lemm}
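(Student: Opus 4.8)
The plan is to follow the strategy of Gou\"ezel~\cite[Lemma 3.23]{G14}: to write $\Phi_r(x)$, up to an additive $O(\eta(r))$, as a Birkhoff sum along the automaton path coding $x$, by decomposing the sum defining $\Phi_r(x)$ according to where the geodesic from $e$ to $x$ comes closest to the summation variable $z$. Fix $\omega \in \Sigma^{*}$ of length $n$, put $x = \alpha_{*}(\omega)$, and let $e = y_0, y_1, \ldots, y_n = x$ be the successive vertices of the geodesic $[e, x]$ read off from $\omega$, so that $\alpha_{*}(\sigma^{k}\omega) = y_k^{-1}x$. For $z \in \Gamma$ let $w(z)$ be a nearest point of $[e, x]$ to $z$; by $\delta$-hyperbolicity $w(z)$ lies within $O(\delta)$ of some $y_{k(z)}$, and
\[
  \Phi_r(x) = \sum_{k = 0}^{n}\ \sum_{z:\, k(z) = k} \frac{G_r(e, z)\, G_r(z, x)}{G_r(e, x)}.
\]

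First I would analyse the inner sums. For $k = 0$ the inner sum divided by $\eta(r)$ is, by definition, the quantity $\psi_r(\omega)$ introduced below; for $1 \le k \le n$ the uniform Ancona inequalities (Theorem~\ref{T:ancona}) give $G_r(e, z) \asymp G_r(e, y_k) G_r(y_k, z)$, $G_r(z, x) \asymp G_r(z, y_k) G_r(y_k, x)$ and $G_r(e, x) \asymp G_r(e, y_k) G_r(y_k, x)$ (valid since $y_k$ is within $O(\delta)$ of geodesics $[e,z]$, $[z,x]$ and lies on $[e,x]$), so the $k$-th inner sum is comparable to $\sum_{z:\, k(z) = k} G_r(y_k, z) G_r(z, y_k)$; left-translating by $y_k^{-1}$ and using the symmetry of $\mu$, this is comparable to $\eta(r)$, which on summing over $k$ already reproves the crude estimate $\Phi_r(x) \leq C(1 + |x|)\eta(r)$. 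To obtain the sharp statement I would invoke the strong uniform Ancona inequalities (Theorem~\ref{T:ancona1}): they show that the normalized $k$-th inner sum differs from $\psi_r(\sigma^{k}\omega)$ --- the value it would take were the source endpoint $e$ replaced by $y_k$ --- by a factor $1 + O(\e^{-\eta k})$ (here $d(e,y_k)=k$ is the length of the stretch over which the two source configurations agree), while a restricted-Green-function estimate in the spirit of Lemma~\ref{L:gfoutside} bounds by $O(\e^{-\eta m})\,\eta(r)$ the total contribution of the $z$ lying at distance $\geq m$ from $[e, x]$. The definition forced on us is
\[
  \psi_r(\omega) := \frac{1}{\eta(r)} \sum_{z:\, w(z)\ \text{near}\ e} \frac{G_r(e, z)\, G_r(z, \alpha_{*}(\omega))}{G_r(e, \alpha_{*}(\omega))},
\]
interpreted, for semi-infinite $\omega$, via the Martin kernel $\lim_{y \to \alpha_{*}(\omega)} G_r(z, y)/G_r(e, y)$ (which exists and is positive by Ancona's theorem) and literally for finite $\omega$; summing the geometric errors over $k$ and absorbing the $k=0$ and $k=n$ terms yields $\Phi_r(\alpha_{*}(\omega)) = \eta(r)\, S_n \psi_r(\omega) + O(\eta(r))$.

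It then remains to check the regularity of $\psi_r$. For H\"older continuity on $\overline{\Sigma}$: if $\omega$ and $\omega'$ agree up to position $m$, the geodesics $[e, \alpha_{*}(\omega)]$ and $[e, \alpha_{*}(\omega')]$ fellow-travel for a length comparable to $m$, so the $z$ with $w(z)$ near $e$ see the same local geometry and each summand is multiplied by $1 + O(\e^{-\eta m})$ by Theorem~\ref{T:ancona1}; since $\eta(r) = \sum_z G_r(e, z)^2 < \infty$ for $r < \rho^{-1}$, the series converges and inherits this, giving $|\psi_r(\omega) - \psi_r(\omega')| \leq C\, 2^{-\beta m}$ with the same $\beta$ as for $\varphi_r$. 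For convergence $\psi_r \to \psi$ in the H\"older topology as $r \uparrow \rho^{-1}$: express $\psi_r$ through ratios of Green functions and combine the continuity of $r \mapsto G_r$ on $[1, \rho^{-1}]$ with the bound $\Vert \varphi_r - \varphi_{\rho^{-1}} \Vert \leq C(\rho^{-1} - r)^{1/2}$ of Lemma~\ref{L:Hphir} and the derivative estimates in its proof --- which control the relevant Green-function ratios uniformly away from the geodesic --- to obtain equicontinuity of the family $\{\psi_r\}$ in $\sH^{\beta}$, and then identify the H\"older limit $\psi$.

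I expect the main obstacle to be the simultaneous control of the three error sources in the decomposition --- (a) upgrading the two-sided bounds of ordinary Ancona to the sharp $1 + O(\e^{-\eta\,\cdot})$ of strong Ancona, (b) the $z$ making long excursions away from $[e, x]$, and (c) the boundary effects for $k$ near $0$ or $n$ --- and proving that all three are $O(\eta(r))$ \emph{uniformly in} $r \in [1, \rho^{-1})$; this uniformity is precisely what the results of~\cite{G14, G15} supply. Once the strong Ancona estimates are in hand, the H\"older-continuity-in-$\omega$ and the convergence-in-$r$ are comparatively routine.
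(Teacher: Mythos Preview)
The paper does not give its own proof of this lemma: it is simply quoted from \cite[Lemma~3.23]{G14} and then used. So there is nothing in the paper to compare your proposal against.

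That said, your sketch is a faithful outline of Gou\"ezel's argument in~\cite{G14}: decompose the sum over $z$ according to the nearest point on $[e,x]$, use the uniform Ancona inequalities to factor each piece through the nearest point $y_k$, and use the strong uniform Ancona inequalities (Theorem~\ref{T:ancona1}) to show that after normalization by $\eta(r)$ the $k$-th piece depends, up to an exponentially decaying error, only on the geometry near $y_k$ along the forward direction of the geodesic --- which is exactly what $\sigma^k\omega$ encodes. The H\"older continuity and the convergence $\psi_r \to \psi$ are handled in~\cite{G14} by the same mechanism you describe (strong Ancona for H\"older regularity in $\omega$, and uniform bounds plus relative compactness in $\sH^{\beta'}$ for the limit in $r$, just as in the proof of Theorem~\ref{T:exponent} in this paper). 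Your identification of the three error sources and the need for uniformity in $r$ is accurate; this uniformity is precisely the content of the ``uniform'' in the uniform Ancona inequalities of~\cite{G14,G15}.
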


In view of~\eqref{e:Vn'} and~\eqref{e:Phi}, we have that
\begin{equation}
  \label{e:Vn'eta}
  \begin{aligned}
      \frac{V_n'(r)}{\eta(r)}
=&
\frac{G_r(e, \, e)}{r H_n(r)} \, \frac{1}{n} \sum_{k = 0}^{n-1} \sL_r^n \left( \id_{[E_{*}]} \cdot \psi_r \circ \sigma^k \right)(\emptyset) + O(n^{-1}) \\
=&
\frac{G_r(e, \, e)}{r H_n(r)} \, \frac{1}{n} \sum_{k = 0}^{n-1} \sL_r^{n-k} \left( \psi_r \sL_r^k \id_{[E_{*}]} \right)(\emptyset) + O(n^{-1}),
  \end{aligned}
\end{equation}
where we have used  $\sL_r(f \cdot g \circ \sigma) = g \sL_r f$ in the second equality. Let $p$ be the least common multiple of $p_i$, $1 \leq i \leq I$. Note that
\[
  \int \psi_r h_{i, \, j}^{(r)} \rd \nu_{i', \, ({ j' + k \mod p_{i'}})}^{(r)}
\]
vanishes except for $i = i'$ and $j \equiv j' + k \mod p_i$. We have from~\eqref{e:perturbation} that
\begin{align*}
  & \sL_r^{n p-k} \left( \psi_r \sL_r^k \id_{[E_{*}]} \right)(\emptyset) \\
  \sim &
     \sum_{i, \, i' \in I(r)} \sum_{j = 0}^{p_i - 1} \sum_{j' = 0}^{p_{i'} - 1} \e^{np \Pr(\varphi_r)} h_{i', \, j'}^{(r)}(\emptyset) \left( \int \id_{[E_{*}]} \rd \nu_{i, \, (j-k\mod p_i)}^{(r)} \right) \left( \int \psi_r h_{i, \, j}^{(r)} \rd \nu_{i', \, (j' + k\mod p_{i'})}^{(r)} \right) \\
  =&
    \e^{np \Pr(\varphi_r)} \sum_{i \in I(r)} \sum_{j, \, j' = 0}^{p_i - 1} \id_{\{j \equiv j' + k \mod p_i\}} \nu_{i, \, j'}^{(r)}([E_{*}]) h_{i, \, j'}^{(r)}(\emptyset) \left( \int \psi_r h_{i, \, j}^{(r)} \rd \nu_{i, \, j}^{(r)} \right)
\end{align*}
as $n \to \infty$. Consequently,
\begin{equation}
  \label{e:Lrnp}
  \begin{aligned}
    & \frac{1}{np} \sum_{k = 0}^{np} \sL_r^{n p-k} \left( \psi_r \sL_r^k \id_{[E_{*}]} \right)(\emptyset) \\
  =&
  \e^{np \Pr(\varphi_r)} \sum_{i \in I(r)} \left( \sum_{j = 0}^{p_i - 1} \nu_{i, \, j}^{(r)}([E_{*}]) h_{i, \, j}^{(r)}(\emptyset) \right) \mu_i^{(r)}(\psi_r) + o \left( \e^{np \Pr(\varphi_r)} \right),
  \end{aligned}
\end{equation}
where $\rd \mu_i^{(r)} = \frac{1}{p_i} \sum_{j = 0}^{p_i - 1} h_{i, \, j}^{(r)} \rd \nu_{i, \, j}^{(r)}$. Recall that $H_n(r) = G_r(e, \, e) \sL_r^n \id_{[E_{*}]}(\emptyset)$. Combining~\eqref{e:perturbation} and~\eqref{e:Lrnp} we prove that for $r_0 \leq r < \rho^{-1}$,
\begin{equation}
  \label{e:limVn'}
  \lim_{n \to \infty} \frac{r V_{n p}'(r)}{\eta(r)}
  =
  \frac{\sum_{i \in I(r)} \left( \sum_{j = 0}^{p_i - 1} \nu_{i, \, j}^{(r)}([E_{*}]) h_{i, \, j}^{(r)}(\emptyset) \right) \mu_i^{(r)}(\psi_r)}{\sum_{i \in I(r)} \sum_{j = 0}^{p_i - 1} \nu_{i, \, j}^{(r)}([E_{*}]) h_{i, \, j}^{(r)}(\emptyset)}
  =: U(r).
\end{equation}
Since
\[
  V_{n p}(\rho^{-1}) - V_{np}(r) = \int_r^{\rho^{-1}} \eta(s) \frac{V_{np}'(s)}{\eta(s)} \rd s,
\]
the dominated convergence theorem and~\eqref{e:limVn'} imply that
\begin{equation}
  \label{e:HRHr}
  \log H(\rho^{-1}) - \log H(r) = \int_r^{\rho^{-1}} \eta(s) U(s) s^{-1} \rd s.
\end{equation}

By~\cite[Theorem 3.1]{G14}, we have that $\eta(r) \sim C_2 \, \left(\rho^{-1} - r \right)^{-1/2}$ as $r \uparrow \rho^{-1}$ for some positive constant $C_2$. Therefore, there is a constant $C_3 > 1$ such that
  \begin{equation}
    \label{e:boundexp}
    C_3^{-1} \, \left(\rho^{-1} - r \right)^{1/2} \leq \log H(\rho^{-1}) - \log H(r) \leq C_3 \, \left(\rho^{-1} - r \right)^{1/2}.
  \end{equation}

  \begin{thm}
    \label{T:exponent}
    There exists a constant $C > 0$ such that
    \begin{equation}
      \label{e:exponent}
      \log H(\rho^{-1}) - \log H(r) \sim C \, \left( \rho^{-1} - r \right)^{1/2} \quad \text{as } r \uparrow \rho^{-1}.
    \end{equation}
  \end{thm}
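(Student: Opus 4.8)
\noindent\textbf{Proof strategy for Theorem~\ref{T:exponent}.} I would start from the integral formula \eqref{e:HRHr}, namely $\log H(\rho^{-1})-\log H(r)=\int_r^{\rho^{-1}}s^{-1}\eta(s)U(s)\,\rd s$ for $r_0\le r<\rho^{-1}$, together with Gou\"ezel's asymptotics $\eta(s)\sim C_2(\rho^{-1}-s)^{-1/2}$ from \cite[Theorem 3.1]{G14} and the trivial fact $s^{-1}\to\rho$. Since $\int_r^{\rho^{-1}}(\rho^{-1}-s)^{-1/2}\,\rd s=2(\rho^{-1}-r)^{1/2}$, the theorem follows once one shows that $U(s)$ converges to a limit $U_{*}$ as $s\uparrow\rho^{-1}$: then $\log H(\rho^{-1})-\log H(r)\sim 2\rho C_2 U_{*}(\rho^{-1}-r)^{1/2}$, which is \eqref{e:exponent} with $C=2\rho C_2 U_{*}$, and the positivity of $C$ is automatic from the lower bound in \eqref{e:boundexp}. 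So the whole point is to prove that $U(s)$ has a limit (and to identify it).

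The main new input I would establish is a clean differentiation formula for the component pressures. Since $r\mapsto G_r(e,\cdot)$ is real-analytic on $(0,\rho^{-1})$, the potential $\varphi_s$ is differentiable in $s$ there; differentiating \eqref{e:phir} and using \eqref{e:derivative} gives $\frac{\partial\varphi_s}{\partial s}(\omega)=\frac{1}{s}\bigl[\Phi_s(\alpha_{*}(\omega))-\Phi_s(\alpha_{*}(\sigma\omega))\bigr]$. Applying Lemma~\ref{L:Phi}(ii) to $\omega$ and to $\sigma\omega$ and using the exact telescoping identity $S_n\psi_s(\omega)-S_{n-1}\psi_s(\sigma\omega)=\psi_s(\omega)$, one obtains
\[
\frac{\partial\varphi_s}{\partial s}(\omega)=\frac{\eta(s)}{s}\,\psi_s(\omega)+\frac{1}{s}\bigl(R_s(\omega)-R_s(\sigma\omega)\bigr),\qquad \|R_s\|_\infty\le C\eta(s).
\]
Although the error term $R_s-R_s\circ\sigma$ is a priori of the same order $O(\eta(s))$ as the main term, it is essentially a coboundary and therefore integrates to $0$ against any $\sigma$-invariant probability measure on $\overline\Sigma$: along the length-$N$ prefix $\omega^{(N)}$ of a typical $\omega$ one has $\sum_{k=0}^{N-1}(R_s-R_s\circ\sigma)(\sigma^k\omega^{(N)})=R_s(\omega^{(N)})-R_s(\emptyset)=O(\eta(s))$, so its Birkhoff average vanishes. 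Combined with the standard identity that the derivative of pressure in a direction $g$ equals the integral of $g$ against the equilibrium state, this yields, for $s\in(r_0,\rho^{-1})$,
\[
\frac{\rd}{\rd s}\Pr_i(\varphi_s)=\int\frac{\partial\varphi_s}{\partial s}\,\rd\mu_i^{(s)}=\frac{\eta(s)}{s}\,\mu_i^{(s)}(\psi_s),\qquad \frac{\rd}{\rd s}\int\varphi_s\,\rd\mu_i=\frac{\eta(s)}{s}\,\mu_i(\psi_s),
\]
the second with the fixed equilibrium measure $\mu_i=\mu_i^{(\rho^{-1})}$ of $\varphi_{\rho^{-1}}$ on $\sC_i$.

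With this I would conclude as follows. For $s$ close to $\rho^{-1}$ one has $\log H(s)=\Pr(\varphi_s)=\max_{1\le i\le I}\Pr_i(\varphi_s)$ (by Theorem~\ref{T:Hr}, Lemma~\ref{L:maximal} and the remark after Proposition~\ref{P:perturbation}), and \eqref{e:HRHr} shows $\log H$ is differentiable on $(r_0,\rho^{-1})$ with $\frac{\rd}{\rd s}\log H(s)=s^{-1}\eta(s)U(s)$. Since $\Pr_i(\varphi_s)\le\log H(s)$ with equality exactly for $i\in I(s)$, the derivatives agree there, so the formula above gives $\mu_i^{(s)}(\psi_s)=U(s)$ for every $i\in I(s)$. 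On the other hand, by \eqref{e:Pri}, the H\"older bound $\|\varphi_s-\varphi_{\rho^{-1}}\|=O((\rho^{-1}-s)^{1/2})$ from Lemma~\ref{L:Hphir} (so the quadratic remainder is $O(\rho^{-1}-s)=o((\rho^{-1}-s)^{1/2})$), the second displayed formula, the convergence $\psi_s\to\psi$ in the H\"older topology (Lemma~\ref{L:Phi}(i)) and hence $\mu_i(\psi_t)\to m_i:=\mu_i(\psi)$, and $\eta(t)\sim C_2(\rho^{-1}-t)^{-1/2}$, one gets for each $i$
\[
\Pr(\varphi_{\rho^{-1}})-\Pr_i(\varphi_s)=\int_s^{\rho^{-1}}\frac{\eta(t)}{t}\,\mu_i(\psi_t)\,\rd t+O(\rho^{-1}-s)=\bigl(2\rho C_2 m_i+o(1)\bigr)(\rho^{-1}-s)^{1/2}.
\]
Taking the maximum over the finitely many $i$ shows that $\log H(\rho^{-1})-\log H(s)=(2\rho C_2\min_i m_i+o(1))(\rho^{-1}-s)^{1/2}$ and that $I(s)$ consists, near $\rho^{-1}$, precisely of the indices attaining $\min_i m_i$ up to $o(1)$; hence $U(s)=\mu_i^{(s)}(\psi_s)\to\min_i m_i=:U_{*}$ for $i\in I(s)$. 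Thus \eqref{e:exponent} holds with $C=2\rho C_2\min_i m_i$, which is $>0$ by \eqref{e:boundexp}. (This also re-derives the convergence of $U(s)$ needed in the first paragraph and identifies its limit.)

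The step I expect to be the crux is the coboundary argument of the second paragraph: one must rigorously justify that an error term of the same order as the main term of interest contributes nothing to the integrals $\int\frac{\partial\varphi_s}{\partial s}\,\rd\mu_i^{(s)}$ and $\int\frac{\partial\varphi_s}{\partial s}\,\rd\mu_i$. The delicacy is that $R_s$ is defined only on the set $\Sigma^{*}$ of finite paths, which is $\mu_i^{(s)}$-null, so one cannot simply invoke invariance to write $\int(R_s-R_s\circ\sigma)\,\rd\mu_i^{(s)}=0$; instead one works with the H\"older extension of $\frac{\partial\varphi_s}{\partial s}$ to $\overline\Sigma$ (checking that this extension exists and has $\sH^{\beta}$-norm $O(\eta(s))$, so that the pressure-derivative identity and differentiation under the integral sign are legitimate) and evaluates the $\mu_i^{(s)}$-integral through the prefix/Birkhoff-average computation indicated above. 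All remaining ingredients — the first-order expansion \eqref{e:Pri}, the H\"older estimate \eqref{e:Hphir}, the convergence $\psi_s\to\psi$, and the two-sided bound \eqref{e:boundexp} — are already available from the preceding sections.
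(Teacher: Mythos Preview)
Your argument is correct, but it takes a more technical route than the paper's own proof. Both approaches plug the H\"older estimate~\eqref{e:Hphir} into the first-order expansion~\eqref{e:Pri} to reduce the statement to showing that, for each maximal component $\sC_i$, the linear term $\int(\varphi_{\rho^{-1}}-\varphi_r)\,\rd\mu_i$ behaves like $C_i(\rho^{-1}-r)^{1/2}$; one then takes $C=\min_i C_i$ and uses~\eqref{e:boundexp} for positivity. The difference is in how this integral is evaluated. You differentiate in $r$, invoke Lemma~\ref{L:Phi} to write $\partial_r\varphi_r=\frac{\eta(r)}{r}\psi_r+\text{(coboundary)}$, argue that the coboundary contributes nothing against the invariant measure, and then integrate back to get $\int(\varphi_{\rho^{-1}}-\varphi_r)\,\rd\mu_i=\int_r^{\rho^{-1}}\frac{\eta(t)}{t}\mu_i(\psi_t)\,\rd t\sim 2\rho C_2\mu_i(\psi)(\rho^{-1}-r)^{1/2}$. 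The paper instead avoids the coboundary step entirely: it sets $f_r:=(\varphi_{\rho^{-1}}-\varphi_r)/(\rho^{-1}-r)^{1/2}$, observes from \cite[Theorem~3.1]{G14} that $f_r(\omega)$ converges pointwise on $\Sigma^{*}$, uses Lemma~\ref{L:Hphir} to get a uniform $\sH^\beta$-bound and hence relative compactness in $\sH^{\beta'}$ for $\beta'<\beta$, and concludes that $f_r\to g$ in $\sH^{\beta'}$ by density of $\Sigma^{*}$; then $C_i=\int g\,\rd\mu_i$ directly. This compactness argument is shorter and sidesteps exactly the technical point you flag as the crux (extending $R_s$ to $\overline\Sigma$ and justifying $\int(R_s-R_s\circ\sigma)\,\rd\mu_i=0$ via prefix approximation). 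Your route, on the other hand, yields the more explicit identification $C=2\rho C_2\min_i\mu_i(\psi)$ in terms of the function $\psi$ of Lemma~\ref{L:Phi} and the constant $C_2$ from Gou\"ezel's asymptotics for $\eta$, and also recovers the limit $U(s)\to\min_i\mu_i(\psi)$ in~\eqref{e:limVn'}.
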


  \begin{proof}
    Recall that $\log H(r) = \Pr(\varphi_r) = \max_{1 \leq i \leq I} \Pr_i(\varphi_r)$. It suffices to prove that there are constants $C_i$ such that for every $1 \leq i \leq I$,
    \begin{equation}
      \label{e:expi}
      \Pr (\varphi_{\rho^{-1}}) - \Pr_i (\varphi_r) \sim C_i \, \left( \rho^{-1} - r \right)^{1/2} \quad \text{as } r \uparrow \rho^{-1}.
    \end{equation}
    In fact,~\eqref{e:exponent} holds with $C = \min_{1 \leq i \leq I} C_i$, which is positive by~\eqref{e:boundexp}.

    Set $f_r := (\varphi_{\rho^{-1}} - \varphi_r)/\left( \rho^{-1} - r \right)^{1/2}$. By~\cite[Theorem 3.1]{G14}, for any $x$, $y\in \Gamma$, there is $C(x, \, y)>0$ such that
    \[
      \log G_{\rho^{-1}}(x, \, y) - \log G_r(x, \, y) \sim C(x, \, y) \left( \rho^{-1} - r \right)^{1/2} \quad \text{as } r \uparrow \rho^{-1}.
\]
It follows that for any $\omega \in \Sigma^{*}$, $f_r(\omega)$ converges to $f(\omega):= C(e, \, \alpha_{*}(\omega)) - C(e, \, \alpha_{*}(\sigma \omega))$ as $r \uparrow \rho^{-1}$. We will show that $f$ is H\"older continuous and the convergence also holds for $\omega \in \Sigma$. Indeed, we have from Lemma~\ref{L:Hphir} that the family of functions $\left\{ f_r,\, 1 \leq r < R \right\}$ is uniformly bounded in $\sH^{\beta}$ and hence relatively compact in $\sH^{\beta'}$ for $\beta' < \beta$. Let $g$ be any limit point of $f_r$ as $r \uparrow \rho^{-1}$. Then $g \in \sH^{\beta'}$ and $g(\omega) = f(\omega)$ for $\omega \in \Sigma^{*}$. Since $\Sigma^{*}$ is a dense open subset of the compact space $\overline{\Sigma}$, $g$ is uniquely determined. Therefore we have proved that $f_r$ converges to $g$ in $\sH^{\beta'}$ as $r \uparrow \rho^{-1}$. Set $C_i = \int g \rd \mu_i$ for $1 \leq i \leq I$. By~\eqref{e:Pri} and the dominated convergence theorem, we obtain~\eqref{e:expi} and complete the proof of the theorem.
  \end{proof}

% \begin{remark}
%   If there is only one maximal component for $\sL_R$, then we have from~\eqref{e:limVn'} that $U(r)$ is continuous at $r = R$. It follows that
% \[
% \log H(R) - \log H(r) \sim C \, (R - r)^{1/2}, \quad \text{as } r \uparrow R.
% \]
% \end{remark}

%   \appendix
%   % \addcontentsline{toc}{section}{Appendices}

\bigskip
\noindent\textbf{Acknowledgements.} We thank Zhan Shi, Xinxin Chen and Shen Lin for valuable discussions. Part of the work has been done while LW and KX were visiting the NYU Shanghai -- ECNU Mathematical Institute. We are grateful to the Institute for hospitality and financial support.

% \ifxetexorluatex
% \printbibliography[heading=bibintoc]
% \else
\bibliography{\jobname}{}

\begin{thebibliography}{10}

\bibitem{BP94}
I.~Benjamini and Y.~Peres, ``Markov chains indexed by trees,'' {\em Ann.
  Probab.}, vol.~22, no.~1, pp.~219--243, 1994.

\bibitem{GM06}
N.~Gantert and S.~M\"{u}ller, ``The critical branching {M}arkov chain is
  transient,'' {\em Markov Process. Related Fields}, vol.~12, no.~4,
  pp.~805--814, 2006.

\bibitem{BM12}
I.~Benjamini and S.~M\"{u}ller, ``On the trace of branching random walks,''
  {\em Groups Geom. Dyn.}, vol.~6, no.~2, pp.~231--247, 2012.

\bibitem{H19}
T.~Hutchcroft, ``Non-intersection of transient branching random walks,'' {\em
  Probab. Theory Relat. Fields}, vol.~to appear, 2020.

\bibitem{CR15}
E.~Candellero and M.~I. Roberts, ``The number of ends of critical branching
  random walks,'' {\em ALEA Lat. Am. J. Probab. Math. Stat.}, vol.~12, no.~1,
  pp.~55--67, 2015.

\bibitem{GM17}
L.~A. Gilch and S.~M\"{u}ller, ``Ends of branching random walks on planar
  hyperbolic {C}ayley graphs,'' in {\em Groups, graphs and random walks},
  vol.~436 of {\em London Math. Soc. Lecture Note Ser.}, pp.~205--214,
  Cambridge Univ. Press, Cambridge, 2017.

\bibitem{HL00}
I.~Hueter and S.~P. Lalley, ``Anisotropic branching random walks on homogeneous
  trees,'' {\em Probab. Theory Related Fields}, vol.~116, no.~1, pp.~57--88,
  2000.

\bibitem{L96b}
T.~M. Liggett, ``Branching random walks and contact processes on homogeneous
  trees,'' {\em Probab. Theory Related Fields}, vol.~106, no.~4, pp.~495--519,
  1996.

\bibitem{L06}
S.~P. Lalley, ``The weak/strong survival transition on trees and nonamenable
  graphs,'' in {\em International {C}ongress of {M}athematicians. {V}ol.
  {III}}, pp.~637--647, Eur. Math. Soc., Z\"{u}rich, 2006.

\bibitem{CGM12}
E.~Candellero, L.~A. Gilch, and S.~M\"uller, ``Branching random walks on free
  products of groups,'' {\em Proc. Lond. Math. Soc. (3)}, vol.~104, no.~6,
  pp.~1085--1120, 2012.

\bibitem{LS97}
S.~P. Lalley and T.~Sellke, ``Hyperbolic branching {B}rownian motion,'' {\em
  Probab. Theory Related Fields}, vol.~108, no.~2, pp.~171--192, 1997.

\bibitem{KPS98}
F.~I. Karpelevich, E.~A. Pechersky, and Y.~M. Suhov, ``A phase transition for
  hyperbolic branching processes,'' {\em Comm. Math. Phys.}, vol.~195, no.~3,
  pp.~627--642, 1998.

\bibitem{KS07}
M.~Kelbert and Y.~M. Suhov, ``Branching diffusions on {${\bf H}^d$} with
  variable fission: the {H}ausdorff dimension of the limiting set,'' {\em Teor.
  Veroyatn. Primen.}, vol.~51, no.~1, pp.~241--255, 2006.

\bibitem{LS98}
S.~P. Lalley and T.~Sellke, ``Limit set of a weakly supercritical contact
  process on a homogeneous tree,'' {\em Ann. Probab.}, vol.~26, no.~2,
  pp.~644--657, 1998.

\bibitem{H19b}
T.~Hutchcroft, ``Percolation on hyperbolic graphs,'' {\em Geom. Funct. Anal.},
  vol.~29, no.~3, pp.~766--810, 2019.

\bibitem{L96}
S.~P. Lalley, ``Percolation on {F}uchsian groups,'' {\em Ann. Inst. H.
  Poincar\'{e} Probab. Statist.}, vol.~34, no.~2, pp.~151--177, 1998.

\bibitem{BS01}
I.~Benjamini and O.~Schramm, ``Percolation in the hyperbolic plane,'' {\em J.
  Amer. Math. Soc.}, vol.~14, no.~2, pp.~487--507, 2001.

\bibitem{L01}
S.~P. Lalley, ``Percolation clusters in hyperbolic tessellations,'' {\em Geom.
  Funct. Anal.}, vol.~11, no.~5, pp.~971--1030, 2001.

\bibitem{GW86}
P.~Gerl and W.~Woess, ``Local limits and harmonic functions for nonisotropic
  random walks on free groups,'' {\em Probab. Theory Relat. Fields}, vol.~71,
  no.~3, pp.~341--355, 1986.

\bibitem{G14}
S.~Gou\"ezel, ``Local limit theorem for symmetric random walks in
  {G}romov-hyperbolic groups,'' {\em J. Amer. Math. Soc.}, vol.~27, no.~3,
  pp.~893--928, 2014.

\bibitem{G15}
S.~Gou\"{e}zel, ``Martin boundary of random walks with unbounded jumps in
  hyperbolic groups,'' {\em Ann. Probab.}, vol.~43, no.~5, pp.~2374--2404,
  2015.

\bibitem{G87}
M.~Gromov, ``Hyperbolic groups,'' in {\em Essays in group theory}, vol.~8 of
  {\em Math. Sci. Res. Inst. Publ.}, pp.~75--263, Springer, New York, 1987.

\bibitem{GH90}
E.~Ghys and P.~de~la Harpe, eds., {\em Sur les groupes hyperboliques d'apr\`es
  {M}ikhael {G}romov}, vol.~83 of {\em Progress in Mathematics}.
\newblock Birkh\"{a}user Boston, Inc., Boston, MA, 1990.
\newblock Papers from the Swiss Seminar on Hyperbolic Groups held in Bern,
  1988.

\bibitem{W00}
W.~Woess, {\em Random walks on infinite graphs and groups}, vol.~138 of {\em
  Cambridge Tracts in Mathematics}.
\newblock Cambridge University Press, Cambridge, 2000.

\bibitem{C93}
M.~Coornaert, ``Mesures de {P}atterson-{S}ullivan sur le bord d'un espace
  hyperbolique au sens de {G}romov,'' {\em Pacific J. Math.}, vol.~159, no.~2,
  pp.~241--270, 1993.

\bibitem{A88}
A.~Ancona, ``Positive harmonic functions and hyperbolicity,'' in {\em Potential
  theory---surveys and problems ({P}rague, 1987)}, vol.~1344 of {\em Lecture
  Notes in Math.}, pp.~1--23, Springer, Berlin, 1988.

\bibitem{S79}
D.~Sullivan, ``The density at infinity of a discrete group of hyperbolic
  motions,'' {\em Inst. Hautes \'{E}tudes Sci. Publ. Math.}, no.~50,
  pp.~171--202, 1979.

\bibitem{CP93}
M.~Coornaert and A.~Papadopoulos, {\em Symbolic dynamics and hyperbolic
  groups}, vol.~1539 of {\em Lecture Notes in Mathematics}.
\newblock Springer-Verlag, Berlin, 1993.

\bibitem{K00}
V.~A. Kaimanovich, ``The {P}oisson formula for groups with hyperbolic
  properties,'' {\em Ann. of Math. (2)}, vol.~152, no.~3, pp.~659--692, 2000.

\bibitem{BHM11}
S.~Blach\`ere, P.~Ha\"\i~ssinsky, and P.~Mathieu, ``Harmonic measures versus
  quasiconformal measures for hyperbolic groups,'' {\em Ann. Sci. \'Ec. Norm.
  Sup\'er. (4)}, vol.~44, no.~4, pp.~683--721, 2011.

\bibitem{MS20}
P.~Mathieu and A.~Sisto, ``Deviation inequalities for random walks,'' {\em Duke
  Math. J.}, vol.~169, no.~5, pp.~961--1036, 2020.

\bibitem{BS00}
M.~Bonk and O.~Schramm, ``Embeddings of {G}romov hyperbolic spaces,'' {\em
  Geom. Funct. Anal.}, vol.~10, no.~2, pp.~266--306, 2000.

\bibitem{V05}
J.~V\"{a}is\"{a}l\"{a}, ``Gromov hyperbolic spaces,'' {\em Expo. Math.},
  vol.~23, no.~3, pp.~187--231, 2005.

\bibitem{C13}
D.~Calegari, ``The ergodic theory of hyperbolic groups,'' in {\em Geometry and
  topology down under}, vol.~597 of {\em Contemp. Math.}, pp.~15--52, Amer.
  Math. Soc., Providence, RI, 2013.

\bibitem{BZ08}
D.~Bertacchi and F.~Zucca, ``Critical behaviors and critical values of
  branching random walks on multigraphs,'' {\em J. Appl. Probab.}, vol.~45,
  no.~2, pp.~481--497, 2008.

\bibitem{GK03}
A.~Grigor'yan and M.~Kelbert, ``Recurrence and transience of branching
  diffusion processes on {R}iemannian manifolds,'' {\em Ann. Probab.}, vol.~31,
  no.~1, pp.~244--284, 2003.

\bibitem{DZ10}
A.~Dembo and O.~Zeitouni, {\em Large deviations techniques and applications},
  vol.~38 of {\em Stochastic Modelling and Applied Probability}.
\newblock Springer-Verlag, Berlin, 2010.
\newblock Corrected reprint of the second (1998) edition.

\bibitem{GL13}
S.~Gou\"ezel and S.~P. Lalley, ``Random walks on co-compact {F}uchsian
  groups,'' {\em Ann. Sci. \'Ec. Norm. Sup\'er. (4)}, vol.~46, no.~1,
  pp.~129--173 (2013), 2013.

\bibitem{LP16}
R.~Lyons and Y.~Peres, {\em Probability on trees and networks}, vol.~42 of {\em
  Cambridge Series in Statistical and Probabilistic Mathematics}.
\newblock Cambridge University Press, New York, 2016.

\bibitem{K75}
J.~F.~C. Kingman, ``The first birth problem for an age-dependent branching
  process,'' {\em Ann. Probability}, vol.~3, no.~5, pp.~790--801, 1975.

\bibitem{B76}
J.~D. Biggins, ``The first- and last-birth problems for a multitype
  age-dependent branching process,'' {\em Advances in Appl. Probability},
  vol.~8, no.~3, pp.~446--459, 1976.

\bibitem{CF10}
D.~Calegari and K.~Fujiwara, ``Combable functions, quasimorphisms, and the
  central limit theorem,'' {\em Ergodic Theory Dynam. Systems}, vol.~30, no.~5,
  pp.~1343--1369, 2010.

\bibitem{C84}
J.~W. Cannon, ``The combinatorial structure of cocompact discrete hyperbolic
  groups,'' {\em Geom. Dedicata}, vol.~16, no.~2, pp.~123--148, 1984.

\bibitem{R04}
D.~Ruelle, {\em Thermodynamic formalism}.
\newblock Cambridge Mathematical Library, Cambridge University Press,
  Cambridge, second~ed., 2004.
\newblock The mathematical structures of equilibrium statistical mechanics.

\bibitem{B08}
R.~Bowen, {\em Equilibrium states and the ergodic theory of {A}nosov
  diffeomorphisms}, vol.~470 of {\em Lecture Notes in Mathematics}.
\newblock Springer-Verlag, Berlin, revised~ed., 2008.
\newblock With a preface by David Ruelle, Edited by Jean-Ren\'{e} Chazottes.

\bibitem{PP90}
W.~Parry and M.~Pollicott, ``Zeta functions and the periodic orbit structure of
  hyperbolic dynamics,'' {\em Ast\'{e}risque}, no.~187-188, p.~268, 1990.

\end{thebibliography}
\bibliographystyle{ieeetr}
% \fi

\bigskip

% \flushleft{Zhan Shi\\*
% LPSM, Sorbonne Universit\'{e} Paris VI\\*
% 4 place Jussieu, F-75252 Paris Cedex 05\\*
% France\\*
% E-mail address: \texttt{zhan.shi@upmc.fr}}\\*

\flushleft{Vladas Sidoravicius\\*
NYU-ECNU Institute of Mathematical Sciences at NYU Shanghai\\*
\& Courant Institute of Mathematical Sciences\\*
New York, NY 10012, USA\\*
% E-mail address: \texttt{vs1138@nyu.edu}}\\*

\flushleft{Longmin Wang\\*
School of Mathematical Sciences, Nankai University\\*
Tianjin 300071, P. R. China\\*
E-mail address: \texttt{wanglm@nankai.edu.cn}\\*

\flushleft{Kainan Xiang\\*
School of Mathematics and Computational Science, Xiangtan University\\*
Xiangtan City 210000, Hunan Province, P. R. China\\*
E-mail addresses: \texttt{kainan.xiang@xtu.edu.cn}\\*
\hspace{\widthof{E-mail addresses: }}\texttt{kainanxiang@gmail.com}

\end{document}